
\documentclass[11pt]{amsart}
\usepackage{amsmath,amssymb}
\usepackage{amsthm}
\usepackage{mathtools}
\usepackage{hyperref} 
\usepackage{tikz}
\usetikzlibrary{cd}
\usepackage{tikz-cd}
\usepackage{comment}
\usepackage[varg]{txfonts}

\usepackage{xcolor}
\hypersetup{
	bookmarksnumbered=true,
	colorlinks=true,
		citecolor=magenta,
		linkcolor=blue,
		urlcolor=orange,
}

\usepackage{bm}
\usepackage{mathrsfs}
\DeclareMathAlphabet{\euscr}{U}{eus}{m}{n} 
\SetMathAlphabet{\euscr}{bold}{U}{eus}{b}{n} 

\usepackage[capitalize, nameinlink]{cleveref}

\theoremstyle{definition}
\newtheorem{theorem}{Theorem}[section]
\newtheorem*{theorem*}{Theorem}
\newtheorem{definition}[theorem]{Definition}
\newtheorem*{definition*}{Definition}
\newtheorem{proposition}[theorem]{Proposition}
\newtheorem*{proposition*}{Proposition}
\newtheorem{lemma}[theorem]{Lemma}
\newtheorem*{lemma*}{Lemma}
\newtheorem{corollary}[theorem]{Corollary}
\newtheorem*{corollary*}{Corollary}
\newtheorem{remark}[theorem]{Remark}
\newtheorem*{remark*}{Remark}
\newtheorem{example}[theorem]{Example}
\newtheorem*{example*}{Example}
\newtheorem{defprop}[theorem]{Definition-Proposition}
\newtheorem*{acknowledgements*}{Acknowledgements}
\newtheorem*{outline*}{Outline of this paper}

\numberwithin{theorem}{section}

\usepackage{enumitem}


\newcommand{\Z}{\mathbb{Z}}

\newcommand{\mplaceholder}{{-}}
\newcommand{\mO}{\mathcal{O}}

\DeclareMathOperator{\Hom}{Hom}
\DeclareMathOperator{\id}{id}
\newcommand{\op}{\text{op}}

\newcommand{\yoneda}{\mathsf{y}}
\newcommand{\catname}[1]{\mathsf{#1}}
\newcommand{\Set}{\catname{Set}}

\newcommand{\Cat}{\catname{Cat}}
\newcommand{\Mod}{\catname{Mod}}
\newcommand{\Qcoh}{\catname{Qcoh}}
\newcommand{\Ab}{\catname{Ab}}
\newcommand{\Ch}{\catname{Ch}}

\newcommand{\catA}{\mathcal{A}}
\newcommand{\catB}{\mathcal{B}}
\newcommand{\catC}{\mathcal{C}}
\newcommand{\catD}{\mathcal{D}}

\newcommand{\catG}{\mathcal{G}}

\newcommand{\catJ}{\mathcal{J}}

\newcommand{\catL}{\mathcal{L}}
\newcommand{\catM}{\mathcal{M}}
\newcommand{\catS}{\mathcal{S}}
\newcommand{\catV}{\mathcal{V}}
\newcommand{\catW}{\mathcal{W}}
\newcommand{\moncatV}{\mathcal{V}}
\newcommand{\basecatV}{V}
\newcommand{\moncatW}{\mathcal{W}} 
\newcommand{\basecatW}{W}
\newcommand{\dgCat}{\catname{dgCat}}
\newcommand{\HodgCat}{\catname{HodgCat}}
\newcommand{\ChCat}{\Ch\text{-}\Cat}

\DeclareMathOperator{\obj}{ob}
\newcommand{\Psh}[1]{\catname{Psh}(#1)}
\DeclareMathOperator{\Kernel}{Ker}
\DeclareMathOperator{\Coker}{Cok}

\newcommand{\intHom}{\mathcal{H}om}
\newcommand{\tten}{\otimes^\bullet}
\newcommand{\tHom}{\Hom^\bullet}
\newcommand{\mten}{\underline{\otimes}^\bullet}
\newcommand{\mHom}{\underline{\Hom}^\bullet}

\newcommand{\VCat}{\moncatV\text{-}\Cat}
\newcommand{\WCat}{\moncatW\text{-}\Cat} 
\newcommand{\underlying}{U}
\newcommand{\Fun}{\catname{Fun}}
\newcommand{\wcolim}{\star}
\newcommand{\tensor}{\otimes}
\newcommand{\cotensor}{\pitchfork}
\DeclareMathOperator{\Lan}{Lan}
\DeclareMathOperator{\colim}{colim}

\newcommand{\fp}{\textit{fp}}
\newcommand{\qc}{\text{qc}}
\newcommand{\du}{\textit{du}}
\DeclareMathOperator{\Spec}{Spec}
\newcommand{\sheafF}{\mathcal{F}}
\newcommand{\sheafG}{\mathcal{G}}
\newcommand{\cpx}[1]{{#1}^\bullet}

\newcommand{\res}[2]{\left. #1 \right\rvert_{#2}}

\title{Grothendieck enriched categories}
\author[Y.\ Imamura]{Yuki Imamura}
\date{}
\address[Y.\ Imamura]{Department of Mathematics, Graduate School of Science, Osaka University, Machikaneyama 1-1, Toyonaka , Osaka 560-0043, Japan}
\email{u287972b@ecs.osaka-u.ac.jp}
\keywords{Grothendieck category, the Gabriel-Popescu Theorem, Enriched category, dg category.}

\begin{document}

\maketitle

\begin{abstract}

In this paper, we introduce the notion of Grothendieck enriched categories for categories enriched over a sufficiently nice Grothendieck monoidal category $\mathcal{V}$, generalizing the classical notion of Grothendieck categories. 
Then we establish the Gabriel-Popescu type theorem for Grothendieck enriched categories. We also prove that the property of being Grothendieck enriched categories is preserved under the change of the base monoidal categories by a monoidal right adjoint functor.

In particular, if we take as $\mathcal{V}$ the monoidal category of complexes of abelian groups, we obtain the notion of Grothendieck dg categories. As an application of the main results, we see that the dg category of complexes of quasi-coherent sheaves on a quasi-compact and quasi-separated scheme is an example of Grothendieck dg categories.
\end{abstract}

\tableofcontents

\newpage
\section{Introduction}
\renewcommand{\thetheorem}{\Alph{theorem}}

A \emph{Grothendieck category} is a cocomplete abelian category with a generator where filtered colimits are exact. The category of modules over a ring and the category of quasi-coherent sheaves on a scheme are examples of Grothendieck categories. A Grothendieck category is known to have various good properties: it admits an injective cogenerator; any object has an injective resolution; every continuous functor from it to the category of sets is representable; the adjoint functor theorem holds.

While the definition above is given in terms of intrinsic properties of the category, a Grothendieck category is also characterized as a nice subcategory of the category of modules over a ring:

\begin{theorem*}[Gabriel-Popescu~\cite{Gabriel-Popescu:1964}; see also \cref{thm:gabriel_popescu} for a slightly generalized statement]\label{theoremA}
	Let $\catA$ be a Grothendieck category and $G\in \catA$ be a generator. Let $R$ be the ring of endomorphisms $\catA(G,G)$. Then the following assertions hold.
	\begin{enumerate}
		\item The additive functor $T=\catA(G,\mplaceholder)\colon \catA\to\Mod(R)$ has a left adjoint $S$.
		\item $T$ is fully faithful.
		\item $S$ is left exact.
	\end{enumerate}
\end{theorem*}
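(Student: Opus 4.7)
The plan is to establish parts (i), (ii), (iii) in that order, with part (iii) being the deepest step and the one in which the Grothendieck hypothesis on $\catA$ is used most essentially.

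For part (i), I would construct $S$ as the essentially unique colimit-preserving extension of the assignment $R \mapsto G$. Concretely, every $R$-module $M$ admits a free presentation $R^{(J)} \to R^{(I)} \to M \to 0$, and we define $S(M)$ to be the cokernel in $\catA$ of the corresponding map $G^{(J)} \to G^{(I)}$, which exists because $\catA$ is cocomplete. To verify $S \dashv T$, it suffices to establish the hom bijection on free $R$-modules $R^{(I)}$, where both $\catA(G^{(I)}, X)$ and $\Hom_R(R^{(I)}, T(X))$ canonically identify with $\prod_I T(X)$, and then extend to arbitrary $M$ by applying left-exactness of the hom functors to its presentation.

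For part (ii), fully faithfulness of $T$ is equivalent to the counit $\epsilon_X \colon ST(X) \to X$ being an isomorphism for every $X \in \catA$. To see $\epsilon_X$ is an epimorphism, note that the canonical summing map $G^{(T(X))} \to X$ is an epimorphism since $G$ generates $\catA$, and it factors through $\epsilon_X$ via the construction of $S$ on the canonical presentation $R^{(T(X))} \twoheadrightarrow T(X)$. To see $\epsilon_X$ is a monomorphism, set $K = \Kernel(\epsilon_X)$ and apply $T$; the triangle identity $T(\epsilon_X) \circ \eta_{T(X)} = \id_{T(X)}$ shows $T(\epsilon_X)$ is split surjective, so it suffices to prove $T(K) = 0$, which (again by the generator property of $G$) reduces to showing that every morphism $G \to ST(X)$ that composes to zero with $\epsilon_X$ is itself zero. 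This last claim is verified by unwinding the explicit description of $ST(X)$ as a cokernel of a map between coproducts of copies of $G$ and invoking AB5 in $\catA$ to commute kernels with the relevant filtered colimits.

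For part (iii), $S$ is automatically right exact as a left adjoint, so the content is that $S$ preserves monomorphisms. The strategy is to write any injection $M' \hookrightarrow M$ in $\Mod(R)$ as a filtered colimit of injections between finitely generated submodules, reducing to the finitely generated case where $S$ is computable from a finite presentation and the required injectivity in $\catA$ follows from a diagram chase using that $G$ is a generator. One then transports the conclusion back along AB5, which ensures that filtered colimits preserve injections in $\catA$.

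The main obstacle will be part (iii): it is precisely here, and in the monomorphism half of part (ii), that AB5—rather than mere cocompleteness plus a generator—is indispensable, since both steps hinge on exchanging kernels with filtered colimits in $\catA$. Everything else reduces to manipulation of presentations by copies of $G$ and formal properties of the adjunction.
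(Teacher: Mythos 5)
The paper does not actually prove this statement: it is quoted from the literature (Gabriel--Popescu; the generalized form \cref{thm:gabriel_popescu} is likewise only cited, to Kuhn and Lowen), so there is no in-paper argument to compare yours against. Your outline follows the standard classical proof, and its architecture is sound: $S=\mplaceholder\otimes_R G$ built from free presentations, the adjunction checked on free modules, full faithfulness via the counit, and left exactness of $S$ via a flatness-type reduction.

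That said, the sketch glosses over the one step that carries the entire weight of the theorem. Both the monomorphism half of (ii) and part (iii) reduce to the following key lemma: for a set $I$ and a family $(f_i)_{i\in I}$ in $\catA(G,X)$, the kernel of the induced map $\sigma\colon G^{(I)}\to X$ equals the subobject of $G^{(I)}$ generated by the syzygies, i.e.\ by the morphisms $z_*\colon G\to G^{(I)}$ with $z\in\Kernel\big(R^{(I)}\to T(X)\big)$. The subtlety is that a generator of a Grothendieck category need not be finitely presentable, so a morphism $G\to G^{(I)}$ need not factor through a finite subcoproduct and is \emph{not} given by a tuple of endomorphisms of $G$; the generator property alone yields the lemma only for finite $I$. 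One must write $G^{(I)}$ as the directed union of its finite subcoproducts and invoke AB5 to obtain $\Kernel(\sigma)=\bigcup_{F}\Kernel(\sigma|_{G^{(F)}})$, after which the finite case applies. Your phrases ``unwinding the explicit description'' and ``commute kernels with the relevant filtered colimits'' point at exactly this, but the lemma needs to be isolated and proved --- without it nothing in (ii) or (iii) goes through. Relatedly, in (iii) the finitely generated case is not AB5-free as you suggest: the preimage of a finitely generated $M'\subseteq M$ in a finite free cover of $M$ need not be finitely generated, so the claim that ``$S$ is computable from a finite presentation and injectivity follows from a diagram chase'' breaks down there. The clean reduction is instead to submodules $N$ of free modules: first finitely generated $N\subseteq R^{n}$ by the syzygy computation above, then arbitrary $N$ by AB5, then general monomorphisms via $A\subseteq B\subseteq C\Rightarrow B/A\hookrightarrow C/A$.
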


The Gabriel-Popescu theorem, in a word, asserts that every Grothendieck category is realized as a reflective subcategory\footnote{A subcategory is called \emph{reflective} if its inclusion functor has a left adjoint.} of the category of modules over a ring such that the left adjoint to the inclusion functor is left exact.
An important point is that the good properties of Grothendieck categories stated above can be derived from this embedding theorem.
Thus the essence of Grothendieck categories is the extrinsic characterization.
Note that a similar characterization of Grothendieck topoi as a nice subcategory is known (\cite[Cor.\ 3.5.5]{Borceux:1994HoCA3}), and Grothendieck categories can also be understood as the additive counterpart of Grothendieck topoi (\cite{Borcuex-Quinteiro:1996,Lowen:2004}).

On the other hand, the importance of dg categories has been recognized in many fields of mathematics (\cite{Bondal-Kapranov:1990, Keller:2006}). It is hence meaningful to study the analogs of Grothendieck categories and the Gabriel-Popescu theorem in the dg setting.
In fact, there are Gabriel-Popescu type theorems for triangulated categories (\cite{Porta:2010}) and for stable infinity categories (\cite{Lurie:HigherAlgebra}), both of which are relatives of dg categories. It is therefore natural to expect a similar theorem for dg categories.

Recall that dg categories are nothing but $\Ch$-enriched categories, where $\Ch$ denotes the monoidal category of cochain complexes of abelian groups.
On the other hand, an abelian category has the unique structure of a pre-additive category (a category enriched over the monoidal category $\Ab$ of abelian groups), and the Gabriel-Popescu theorem can be considered as a theorem in $\Ab$-enriched category theory.
In this paper we define the notion of a Grothendieck category enriched over $\moncatV$, where $\moncatV$ is taken from a class of monoidal categories which includes both $\Ab$ and $\Ch$, as a nice subcategory of the category of presheaves on a small $\moncatV$-category.
We then establish the Gabriel-Popescu type theorem in this generality.

We refer to a symmetric monoidal closed category that is complete and cocomplete as a \emph{cosmos}. We call a cosmos that is also a Grothendieck category a \emph{Grothendieck cosmos}.
For a Grothndieck cosmos $\moncatV=(\basecatV,\otimes,I)$, let us consider the following conditions:
\begin{enumerate}
	\item[] (C1) The unit object $I\in \moncatV$ is finitely presentable.
	\item[] (C2) $\moncatV$ has a generating set of dualizable objects.
\end{enumerate}
An object $C$ of a category $\catC$ is said to be \emph{finitely presentable} if the functor $\Hom(C,\mplaceholder)\colon \catC\to\Set$ preserves filtered colimits. An object $X$ of a symmetric monoidal closed category is said to be \emph{dualizable} if there is an isomorphism $[X,I]\otimes \mplaceholder \cong [X,\mplaceholder]$ of functors.
We will show in \cref{prop:C1_C2_implies_lfp_base} that a Grothendieck cosmos satisfying the conditions (C1) and (C2) is a \emph{locally finitely presentable base}, a monoidal category in which we can discuss the finiteness of objects. In categories enriched over such a monoidal category, finite limits make sense as in usual categories.

Then, inspired by the Gabriel-Popescu theorem, we make the following definition.

\begin{definition}[\cref{def:Grothendieck_V-category}]\label{definitionB}
	Let $\moncatV$ be a locally finitely presentable base. A $\moncatV$-category $\catA$ is said to be a \emph{Grothendieck $\moncatV$-category} if there exist a small $\moncatV$-category $\catC$ and a $\moncatV$-adjunction
	\[
	\begin{tikzpicture}
		\node at (-0.2,0) {$S:[\catC^\op,\catV]$};
		\node at (3.1,0) {$\catA : T$};
		\draw[->] (0.9,0.16) to (2.5,0.16);
		\draw[->] (2.5,-0.16) to (0.9,-0.16);
		\node at (1.7,0) {$\perp$};
	\end{tikzpicture}
	\]
	such that
	\begin{enumerate}
		\item the right adjoint $T$ is fully faithful and 
		\item the left adjoint $S$ is left exact.
	\end{enumerate}
\end{definition}

The Grothendieck cosmos $\Ch(R)$ of cochain complexes of modules over a commutative ring $R$, which we have in mind in particular for applications, satisfies the conditions (C1) and (C2). Applying \cref{definitionB} to $\moncatV=\Ch$, we get the notion of Grothendieck dg categories.

A remarkable property of Grothendieck $\moncatV$-categories is that the following adjoint functor theorem holds. This indicates the usefulness of Grothendieck $\moncatV$-categories.

\begin{proposition}[\cref{prop:AFT_for_C1C2GrothCosmos}]\label{propositionE}
	Let $\moncatV$ be a Grothendieck cosmos with the conditions (C1) and (C2) and $F\colon \catA\to\catB$ a $\moncatV$-functor.
	\begin{enumerate}
		\item Suppose $\catA$ is a Grothendieck $\moncatV$-category. If it preserves all small conical colimits, then $F$ has a right adjoint.
		\item Suppose $\catA$ is a Grothendieck $\moncatV$-category and $\catB$ is cotensored. If the underlying ordinary functor $F_0$ is cocontinuous, then $F$ has a right adjoint.
	\end{enumerate}
\end{proposition}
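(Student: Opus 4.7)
My plan is to reduce both assertions to the classical special adjoint functor theorem (SAFT) for ordinary Grothendieck categories, and then upgrade the resulting ordinary right adjoint to a $\moncatV$-enriched one. The first step is to show that the underlying ordinary category $\catA_0$ of a Grothendieck $\moncatV$-category $\catA$ is itself an ordinary Grothendieck category. The $\moncatV$-adjunction $S\dashv T$ of \cref{definitionB} descends to an ordinary reflective embedding $T_0\colon \catA_0\hookrightarrow [\catC^\op,\moncatV]_0$ with left-exact reflector $S_0$. Assuming the paper has established that the presheaf category $[\catC^\op,\moncatV]_0$ is Grothendieck (which one expects among the preparatory results for the $\moncatV$-enriched Gabriel--Popescu theorem, exploiting (C1) and (C2)), the well-known fact that any reflective subcategory of a Grothendieck category with left-exact reflector is again Grothendieck then yields the claim.

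Granted this, the classical SAFT applies: $\catA_0$ is cocomplete, admits a generator, and is well-copowered, so any cocontinuous functor out of $\catA_0$ has a right adjoint. In case (i), $F_0$ is cocontinuous because $F$ preserves small conical colimits; in case (ii), this is assumed directly. Either way, we obtain an ordinary right adjoint $G_0\colon \catB_0\to\catA_0$ to $F_0$.

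It then remains to promote $G_0$ to a $\moncatV$-functor that is right $\moncatV$-adjoint to $F$. In case (ii), this is essentially automatic: since $\catB$ is cotensored, one equips $G_0$ with a canonical $\moncatV$-functor structure via the cotensor, and the enriched natural isomorphism $\catB(FA,B)\cong \catA(A,GB)$ is extracted from the ordinary bijection by cotensoring against arbitrary $X\in\moncatV$, a standard construction in enriched category theory.

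Case (i) is the principal difficulty, since $\catB$ is no longer assumed cotensored. Here I would argue that $F$ in fact preserves all $\moncatV$-weighted colimits and then invoke an enriched adjoint functor theorem for locally presentable $\moncatV$-categories. Concretely, $\catA$ is tensored (being a $\moncatV$-reflective subcategory of the tensored $\moncatV$-category $[\catC^\op,\moncatV]$ via the $\moncatV$-left adjoint $S$, which preserves tensors), and condition (C2) expresses every $X\in\moncatV$ as a conical colimit of dualizable objects $X_i$, so each tensor $X\otimes A$ in $\catA$ is a conical colimit of tensors $X_i\otimes A$. For dualizable $X_i$ the universal property of $X_i\otimes A$ is controlled by $\moncatV$-hom data via duality, which any $\moncatV$-functor automatically respects; combining this with preservation of conical colimits by $F$ should yield preservation of all weighted colimits, after which the enriched SAFT delivers the desired $\moncatV$-right adjoint $G$.
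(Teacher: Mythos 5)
Your treatment of part (i) is essentially the paper's argument: condition (C2) yields a presentation $\bigoplus_i P_i\to\bigoplus_j P_j\to X\to 0$ of an arbitrary $X\in\moncatV$ by dualizable objects, tensoring with a dualizable object is an absolute colimit (\cref{prop:tensoring_with_dualizable_is_absolute}), so preservation of small conical colimits forces $F$ to preserve all tensors and hence all weighted colimits, after which the enriched adjoint functor theorem (applied via the small dense subcategory $S\circ\yoneda(\catC)$ of $\catA$, as in \cref{prop:adjoint_functor_theorem}) produces the right adjoint. Your detour through the ordinary SAFT and the ordinary right adjoint $G_0$ is then superfluous---once $F$ preserves all weighted colimits the enriched theorem gives the $\moncatV$-adjoint directly---but it is not wrong; the preliminary fact that $\catA_0$ is an ordinary Grothendieck category is indeed established in the paper (as a Giraud subcategory of $\Fun(\catC^\op,\catV)$).

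The genuine gap is in part (ii). It is not true that enriching $G_0$ is ``essentially automatic'' when $\catB$ is cotensored. To promote $G_0$ to a right $\moncatV$-adjoint you must produce a $\moncatV$-natural isomorphism $\catA(A,G_0B)\cong\catB(FA,B)$, and testing both sides against $X\in\moncatV$ (using that $\catA$ is tensored and $\catB$ is cotensored) reduces this to $\catA_0(X\tensor A,G_0B)\cong\catA_0(A,G_0(X\cotensor B))$, i.e.\ to $G_0(X\cotensor B)\cong X\cotensor G_0B$, which by adjunction is exactly the assertion that $F$ preserves tensors, $F(X\tensor A)\cong X\tensor FA$. That is precisely the nontrivial point where (C2) must enter; cotensoring the ordinary bijection does not produce it for free. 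The correct (and short) route for (ii) is the reduction the paper makes: since $\catB$ is cotensored, ordinary colimits in $\catB_0$ are automatically conical in $\catB$ (dual of \cref{prop:tensored_and_ordinary_limit_implies_conical_limit}), so cocontinuity of $F_0$ already implies that $F$ preserves all small conical colimits, and (ii) follows by applying (i).
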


The definition of Grothendieck enriched categories in \cref{definitionB} is the generalization to enriched categories of the extrinsic characterization of Grothendieck categories given by the Gabriel-Popescu theorem.
Hence it is natural to ask if we can characterize Grothendieck enriched categories in terms of intrinsic properties of enriched categories in such a way that for $\Ab$-enriched categories the classical Gabriel-Popescu theorem is recovered.
The main theorem of this paper, which is stated as \cref{theoremC} below, asserts that it is in fact possible in the case that the enriching monoidal category is a nice Grothendieck cosmos.
This theorem is useful since, generally speaking, it is easier to confirm intrinsic properties of (enriched) categories rather than finding nice embeddings as in \cref{definitionB}.

\begin{theorem}[\cref{thm:main_theorem}]\label{theoremC}
	Let $\moncatV$ be a Grothendieck cosmos which satisfies the conditions (C1) and (C2). Then a $\moncatV$-category $\catA$ is a Grothendieck $\moncatV$-category if and only if it fulfills the following conditions.
	\begin{enumerate}
		\item $\catA$ is cocomplete.
		\item $\catA$ is finitely complete.
		\item $\catA$ has a $\moncatV$-generating set of objects (see \cref{def:V-generators} for the definition).
		\item The homomorphism theorem holds in $\catA$. That is, for any morphism $f$ in $\catA_0$ the canonical map $\Coker(\Kernel(f))\to\Kernel(\Coker(f))$ is an isomorphism.
		\item Conical filtered colimits are left exact. Namely for any filtered category $\catJ$, the colimit $\moncatV$-functor $\colim_\catJ \colon [\catJ_\moncatV,\catA]\to\catA$ preserves finite limits.
	\end{enumerate}
\end{theorem}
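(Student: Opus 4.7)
The plan is to prove the two directions separately; the forward direction is a routine transfer of properties along the reflective adjunction, while the backward direction is the crux and amounts to an enriched Gabriel--Popescu construction. For the forward direction, suppose $\catA$ sits in an adjunction $S\dashv T\colon\catA\rightleftarrows[\catC^\op,\moncatV]$ as in \cref{definitionB}. The presheaf $\moncatV$-category itself satisfies (i)--(v) because $\moncatV$ is a Grothendieck cosmos and (co)limits are computed pointwise; filtered-colimit exactness (AB5) and the existence of a $\moncatV$-generating set (namely representables tensored by a generator of $\moncatV$) transfer from $\moncatV$ to $[\catC^\op,\moncatV]$. The five conditions then descend to $\catA$: (i) and (ii) by reflectivity in a bicomplete $\moncatV$-category; (iii) by taking $S$-images of a generating set, using fully faithfulness of $T$; (iv) and (v) because $T$ preserves finite limits (as a right adjoint) and, since $S$ is left exact, also conical filtered colimits, so these operations on $\catA$ may be verified inside $[\catC^\op,\moncatV]$ where they already commute.

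For the converse, let $\{G_\alpha\}$ be the $\moncatV$-generating set provided by (iii), let $\iota\colon\catC\hookrightarrow\catA$ denote the inclusion of the full $\moncatV$-subcategory on these objects, and define $T=\catA(\iota\mplaceholder,\mplaceholder)\colon\catA\to[\catC^\op,\moncatV]$; its left adjoint $S=\Lan_\yoneda\iota$ exists because $\catA$ is cocomplete and $\moncatV$-tensored (from (i) together with the tensors forced by $\moncatV$-cocompleteness). The fully faithfulness of $T$ is essentially the defining property of a $\moncatV$-generating set, namely the natural isomorphism $\catA(A,B)\cong\int_{c\in\catC}[\catA(c,A),\catA(c,B)]$, and is therefore built in.

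The main obstacle is showing that $S$ is left exact. Since $S$ is a left adjoint it preserves cokernels, so by (ii) and (iv) it suffices to show that $S$ preserves kernels. The strategy is: \emph{(a)} note that $S(\catC(-,c))\cong c$; \emph{(b)} by (C1), (C2), and the local finite presentability of $\moncatV$, write any presheaf $F\in[\catC^\op,\moncatV]$ as a filtered colimit of finite colimits of tensors $\catC(-,c)\otimes V$ with $V\in\moncatV$ finitely presentable; \emph{(c)} exploit that filtered colimits commute with finite limits in $[\catC^\op,\moncatV]$ (Grothendieck) and in $\catA$ (by (v)) to reduce the left exactness of $S$ to the preservation of kernels between such finitely presented presheaves; \emph{(d)} adapt the classical Gabriel--Popescu exact-sequence computation by presenting the kernel as a cokernel of a map between representable-like objects and invoking the homomorphism theorem (iv) in $\catA$ to identify this, after applying $S$, with the kernel computed directly in $\catA$. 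Dualizability from (C2) is essential throughout \emph{(d)}, since it allows the $\moncatV$-enrichment to be moved freely between $[\catC^\op,\moncatV]$ and $\catA$ via the hom-tensor adjunctions. The hardest step will be \emph{(d)}, where the $\moncatV$-generating condition, (iv), (v), and (C2) all conspire; the remaining reductions \emph{(a)}--\emph{(c)} are formal consequences of (v) and the Grothendieck nature of $\moncatV$.
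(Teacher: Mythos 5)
Your forward direction is essentially the paper's argument and is fine. The backward direction, however, has a genuine gap at its very center: you assert that the full faithfulness of $T=\catA(\iota\mplaceholder,\mplaceholder)$ ``is essentially the defining property of a $\moncatV$-generating set, namely the natural isomorphism $\catA(A,B)\cong\int_{c\in\catC}[\catA(c,A),\catA(c,B)]$, and is therefore built in.'' That isomorphism is the statement that $\catC$ is \emph{dense} in $\catA$, which is strictly stronger than being a $\moncatV$-generating set; \cref{def:V-generators} only demands that the underlying functor $(\Lan_F\yoneda)_0$ be \emph{faithful}. Deducing density from mere generation (plus conditions (ii), (iv), (v)) is precisely the nontrivial content of the Gabriel--Popescu theorem -- it is what makes \cref{thm:gabriel_popescu} a theorem rather than a tautology -- so it cannot be taken for granted. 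Similarly, your step \emph{(d)} for the left exactness of $S$ is announced as ``adapt the classical Gabriel--Popescu exact-sequence computation'' without being carried out; since you acknowledge this is the hardest step, the proposal at this point is a plan rather than a proof. There is also a smaller gap in the reduction ``$S$ preserves cokernels, so it suffices to show $S$ preserves kernels'': by \cref{prop:condition_for_left_exact}, enriched left exactness also requires preservation of cotensors $X\cotensor c$ with $X\in\basecatV_\fp$, and the kernel/cokernel language presupposes that $\catA_0$ is additive, which has not been established from (i)--(v).

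For comparison, the paper sidesteps a direct enriched computation entirely: it first shows (via \cref{lem:S_is_left_exact_when_S0_is_so}, using (C1), (C2) and the resolution of finitely presentables by dualizables) that it suffices to prove $S_0$ left exact and $T_0$ fully faithful at the level of underlying ordinary categories; it then verifies that $\catA_0$ is a Grothendieck category with generating set $\{g_j\tensor c\}$ (this uses \cref{prop:tensoring_with_dualizable_is_absolute} to see that $S\circ G$ is fully faithful) and applies the classical \cref{thm:gabriel_popescu} twice -- once to $\Fun(\catC^\op,\catV)$ and once to $\catA_0$ -- to extract both the full faithfulness of $T_0$ and the left exactness of $S_0$ by composing and cancelling the two adjunctions. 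If you want to salvage your outline, you should replace the claim that full faithfulness is ``built in'' by an actual argument, and the most economical one available is exactly this reduction to the ordinary Gabriel--Popescu theorem, which in particular forces you to prove that $\catA_0$ is abelian and satisfies (AB5) from hypotheses (i)--(v) -- a step currently missing from your proposal.
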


Let us give an outline of the proof of \cref{theoremC}.
It is easy to see that a Grothendieck $\moncatV$-category as defined in \cref{definitionB} satisfies the conditions (i)--(v) of \cref{theoremC}. To verify the converse, take a $\moncatV$-category $\catA$ satisfying the conditions of \cref{theoremC}. Then it is known that we can associate with the inclusion functor $\catC\hookrightarrow \catA$ an adjunction between $\catA$ and the presheaf category $[\catC^\op,\catV]$. More generally, if $\catC$ is a small $\moncatV$-category  and $\catA$ is a cocomplete $\moncatV$-category, where $\moncatV$ is a cosmos, then we can associate with any $\moncatV$-functor $F\colon \catC\to\catA$ an $\moncatV$-adjunction $\Lan_\yoneda F \dashv \Lan_F \yoneda$ of left Kan extensions as follows:
\[
\begin{tikzpicture}[auto]
	\node (hatC) at (0,0) {$[\catC^\op,\catV]$};
	\node (C) at (0,-2) {$\catC$}; \node (D) at (2.9,-2) {$\catA.$};
	
	\draw[->] (C) to node {$\scriptstyle \yoneda$} (hatC);
	\draw[->] (C) to node[swap] {$\scriptstyle F$} (D);
	\draw[->] (0.9,0) to node {$\scriptstyle \Lan_\yoneda F$}  (2.8,-1.4);
	\draw[->] (2.5,-1.7) to node {$\scriptstyle \Lan_F \yoneda$} node[sloped,pos=0.45] {$\perp$} (0.6,-0.3);
\end{tikzpicture}
\]
Here $\yoneda$ denotes the Yoneda embedding.
In this paper, we will refer to this adjunction as the \emph{nerve-and-realization ($\moncatV$-)adjunction} associated with $F$. For example, the adjunction in the Gabriel-Popescu theorem is the nerve-and-realization $\Ab$-adjunction associated with the $\Ab$-functor $R\to \catA$, where the ring $R$ is viewed as an $\Ab$-category with one single object.

Once we obtain the nerve-and-realization adjunction $S\dashv T\colon [\catC^\op,\catV]\to \catA$ associated with the inclusion functor $F\colon\catC\hookrightarrow\catA$, we only need to show that $T$ is fully faithful and that $S$ is left exact. Since $T$ is a right adjoint, it is sufficient to check that its underlying functor $T_0$ is fully faithful. Likewise, by \cref{lem:S_is_left_exact_when_S0_is_so}, it is sufficient to check that $S_0$ is left exact. Then, after we verify that $\catA_0$ is a Grothendieck category, we can use the Gabriel-Popescu theorem to prove that $T_0$ is fully faithful and $S_0$ is left exact.

As an application of \cref{theoremC}, we can easily verify that the change-of-base functor associated with a monoidal right adjoint preserves the property of being a Grothendieck enriched category.

\begin{proposition}[\cref{prop:monoidal_right_adjoint_preserve_Grothendieck_V-category}]\label{propositionD}
	Let $\moncatV,\moncatW$ be Grothendieck cosmoi and $F\dashv G\colon \moncatV\to\moncatW$ a monoidal adjunction.	Suppose that $\moncatV$ satisfies the conditions (C1) and (C2). If a $\moncatW$-category $\catB$ is a Grothendieck $\moncatW$-category, then the $\moncatV$-category $G(\catB)$ is a Grothendieck $\moncatV$-category.
\end{proposition}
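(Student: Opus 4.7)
The plan is to apply the intrinsic characterization \cref{theoremC} to the $\moncatV$-category $G(\catB)$, reducing the proof to verifying the five conditions (i)--(v) of that theorem. The organizing observation is that change of base along a monoidal adjunction is transparent to the underlying ordinary category: since $F$ is strong monoidal, $F(I_\moncatV)\cong I_\moncatW$, and therefore
\[
G(\catB)_0(b,b') \;\cong\; \moncatW(F I_\moncatV, \catB(b,b')) \;\cong\; \catB_0(b,b').
\]
Hence $G(\catB)_0$ coincides with $\catB_0$ as ordinary categories.

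Next I would establish that $\catB_0$ is an ordinary Grothendieck category. By \cref{definitionB} applied to $\catB$, the category $\catB_0$ sits as a reflective subcategory of $[\catD^\op,\moncatW]_0$ with left-exact reflector; since $\moncatW_0$ is Grothendieck the presheaf $\moncatW$-category $[\catD^\op,\moncatW]_0$ is Grothendieck, and hence so is $\catB_0 = G(\catB)_0$. This immediately yields conditions (iv) and (v) of \cref{theoremC} for $G(\catB)$, as both are statements about the underlying ordinary category alone.

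For conditions (i) and (ii), the natural isomorphism
\[
\moncatV(V, G(\catB)(b,b')) \;\cong\; \moncatW(FV, \catB(b,b'))
\]
identifies $\moncatV$-tensors and $\moncatV$-cotensors in $G(\catB)$ by $V$ with $\moncatW$-tensors and $\moncatW$-cotensors by $FV$ in $\catB$, and these exist since $\catB$ is $\moncatW$-(co)complete; conical (co)limits transfer via the identification $G(\catB)_0 = \catB_0$. For condition (iii), I would start from a $\moncatW$-generating set $\{b_i\}\subset\catB$, extracted for example from the fully faithful right adjoint in \cref{definitionB} as the images of the representables. I would then argue that the same set is $\moncatV$-generating in $G(\catB)$: unwinding \cref{def:V-generators} and invoking condition (C2) on $\moncatV$, a dualizable $\moncatV$-generating family $\{V_\alpha\}$ gives, via $F$ and the tensor-hom adjunction, a family that detects isomorphisms in $\catB$, which translates back to the required $\moncatV$-detection property in $G(\catB)$ through the monoidality of $F\dashv G$.

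The main obstacle will be condition (iii): this is the only place where the interaction between hypothesis (C2) on $\moncatV$ and the monoidality of $F$ enters in an essential way, and one has to be careful that the duality data on the $V_\alpha$ is preserved by the strong monoidal functor $F$ so that the $\moncatW$-detection translates cleanly into $\moncatV$-detection. The remaining four conditions reduce fairly directly either to the fact that $G(\catB)_0 = \catB_0$ is an ordinary Grothendieck category, or to the transfer of (co)tensors across the monoidal adjunction.
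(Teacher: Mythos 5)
Your overall strategy is the paper's: identify $(G(\catB))_0\cong\catB_0$, show $\catB_0$ is an ordinary Grothendieck category, and verify the five conditions of \cref{thm:main_theorem} for $G(\catB)$. Conditions (i), (ii), (iv) are handled as in the paper (via \cref{prop:monoidal_right_ajoint_commute_with_underlying_functor}, \cref{prop:monoidal_right_adjoint_preserve_cotensoredness}, \cref{cor:monoidal_right_adjoint_preserve_completeness}), and for (v) your reduction to the underlying category is fine once you cite \cref{lem:S_is_left_exact_when_S0_is_so} --- note that (v) is \emph{not} literally a statement about $\catA_0$ alone, since left exactness of a $\moncatV$-functor also concerns cotensors with finitely presentable objects; it is that lemma (which uses (C1) and (C2)) that lets you detect it on underlying functors.

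The genuine problem is your treatment of condition (iii), which you single out as ``the main obstacle.'' It is not an obstacle, and the route you sketch is both unnecessary and off-target. A $\moncatV$-generating set in the sense of \cref{def:V-generators} is by definition a small full subcategory $\catC$ such that the \emph{underlying} functor $(\Lan_F\yoneda)_0$ is faithful; by \cref{prop:generators_form_V-generators} this holds as soon as $\obj\catC$ is an ordinary generating set of $\catA_0$. Since $(G(\catB))_0\cong\catB_0$ is a Grothendieck category, it has an ordinary generating set, and you are done --- no dualizable generators of $\moncatV$, no condition (C2), and no transport of duality data through $F$ are needed at this step. Your sketch also drifts from the actual definition: you speak of families that ``detect isomorphisms'' (conservativity), whereas $\moncatV$-generation is a faithfulness condition, and you propose to manufacture the generating family from dualizable objects of $\moncatV$ rather than from objects of $\catB$ itself. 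As written, that step would not go through; replacing it by the appeal to \cref{prop:generators_form_V-generators} repairs the proof and brings it into line with the paper's argument.
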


As an immediate application of \cref{propositionD}, we see that for a quasi-compact and quasi-separated scheme $X$ over a commutative ring $R$, the dg category of complexes of quasi-coherent sheaves on $X$ is a Grothendieck $\Ch(R)$-category (\cref{example:dg_cat_on_qcqs_scheme_over_R}).

The Gabriel-Popescu theorem for triangulated categories is shown in \cite{Porta:2010}. It is proved that any algebraic well-generated triangulated category is a localization of the derived category of some small dg category with respect to a localizing subcategory generated by a set of objects. On the other hand, a Grothendieck dg category is pretriangulated so that its homotopy category has a natural triangulated structure. The homotopy category of a Grothendieck dg category is not well-generated, but locally well-generated in the sense of \cite{Stovicek:2010}.

However, in view of the relations of dg categories to triangulated categories and stable infinity categories, the notion of Grothendieck dg categories seems too naive. We should rather work in the localization $\HodgCat$ of $\dgCat$ with respect to quasi-equivalences and use derived dg categories in place of the dg categories of dg modules. Unfortunately, our methods do not apply to $\HodgCat$ directly (see \cref{remark:the_future_task}). It is a future task to solve this issue.

\begin{outline*}
	\cref{sec:preliminaries} is devoted to preliminaries.
	In \cref{subsec:enriched_categories} we review the basic concepts of enriched category theory, referring the reader to \cite{Kelly:1982Basic,Borceux:1994HoCA2} for further details.
	In \cref{subsec:generators_and_strong_generators} we recall the definition of a generator and a strong generator of an ordinary category and an abelian category.
	In \cref{subsec:flp_base_and_finite_limits} we give the definition of a locally finitely presentable base and describe what finite limits are like in enriched categories, following \cite{Kelly:1982Structures,Borcuex-Quinteiro-Rosicky:1998}.
	In \cref{subsec:dualizable_objects} we recall the notion of dualizable objects and the relations with weighted limits (\cref{prop:tensoring_with_dualizable_is_absolute}).
	
	In \cref{subsec:the_Gabriel-Popescu_theorem} we state the slightly generalized version of the classical Gabriel-Popescu theorem, which will be used in the proof of the main theorem (\cref{thm:main_theorem}).
	
	In \cref{subsec:Grothendieck_cosmoi} we introduce Grothendieck cosmoi satisfying the finiteness conditions (C1) and (C2). We show the fact that they are locally finitely presentable bases. This fact is due to \cite{Holm-Odabasi:2019}. We also explain their examples.
	
	Main results are given in \cref{subsec:Grothendieck_enriched_categories}. There we define the notions of enriched generators and enriched Grothendieck categories. Then we prove the Gabriel-Popescu type theorem for Grothendieck enriched categories (\cref{thm:main_theorem}). After that, as an application of the main theorem, we show in \cref{prop:monoidal_right_adjoint_preserve_Grothendieck_V-category} that the property of being Grothendieck enriched categories is preserved under changing the enrichments by a monoidal right adjoint functor. We use this proposition to give examples of Grothendieck enriched categories.
	Finally we observe that the adjoint functor theorem holds for Grothendieck enriched categories.
\end{outline*}

After the first draft of this paper appeared on the arXiv, it was pointed out by Ivan Di Liberti that \cref{def:Grothendieck_V-category} also appears in \cite{Garner-Lack:2012} under the name of \emph{$\moncatV$-topoi}.

\begin{acknowledgements*}
	The author would like to express his sincere gratitude to his supervisor, Shinnosuke Okawa, for a lot of advice and helpful inspiration through regular seminars.
	The author is also very grateful to Ivan Di Liberti for informing the author of the references on localizations and $\moncatV$-topoi (see \cref{remark:Ivan's_comments}).
\end{acknowledgements*}

\section{Preliminaries}\label{sec:preliminaries}
\setcounter{theorem}{0}%
\renewcommand{\thetheorem}{\thesection.\arabic{theorem}}

We tacitly assume that all categories discussed in this paper are locally small.

\subsection{Enriched categories}\label{subsec:enriched_categories}

The best general references here are Kelly~\cite{Kelly:1982Basic} and Borceux~\cite{Borceux:1994HoCA2}.

Let $\moncatV=(\basecatV,\otimes,I,[\mplaceholder,\mplaceholder],a, l, r, c)$ be a symmetric monoidal closed category that is complete and cocomplete, where $a$, $l$, $r$, and $c$ denote the associativity natural isomorphism $a_{XYZ}\colon (X\otimes Y)\otimes Z \xrightarrow{\cong} X\otimes (Y\otimes Z)$, the left unit natural isomorphism $l_X\colon I\otimes X\xrightarrow{\cong} X$, the right unit natural isomorphism $r_X\colon X\otimes I \xrightarrow{\cong} X$, and the symmetry natural isomorphism $c_{XY}\colon X\otimes Y \xrightarrow{\cong} Y\otimes X$, respectively. We will often omit the subscripts.

\begin{definition}
	A \emph{$\moncatV$-enriched category} $\catC$ (\emph{$\moncatV$-category} for short) consists of 
	\begin{itemize}
		\item a collection of objects $\obj(\catC)$;
		\item for every pair $C,D\in \obj(\catC)$ of objects, an object $\catC(C,D)$ of $\basecatV$, called the Hom object of $\catC$;
		\item for every triple $A,B,C\in \obj(\catC)$ of objects, the morphism $m=m_{ABC}\colon \catC(B,C)\otimes\catC(A,B) \to \catC(A,C)$ of $\basecatV$, called the composition map of $\catC$;
		\item for every object $C\in \obj(\catC)$, a morphism $j_C\colon I \to \catC(C,C)$ of $\basecatV$, called the identity map of $\catC$
	\end{itemize}
	so that the following associativity and unit diagrams commute for $A, B, C, D\in \obj(\catC)$:
	\[\begin{tikzcd}[column sep=tiny]
	{\big(\catC(C,D)\otimes \catC(B,C)\big)\otimes \catC(A,B)} \arrow{d}[swap]{m\otimes1} \arrow{rr}{a} & & {\catC(C,D)\otimes \big(\catC(B,C)\otimes \catC(A,B)\big)} \arrow{d}{1\otimes m} \\
	{\catC(B,D)\otimes \catC(A,B)} \arrow{rd}[swap]{m} && {\catC(C,D)\otimes \catC(A,C)} \arrow{ld}{m} \\
	& {\catC(A,D),} &
	\end{tikzcd}\]%
	\[\begin{tikzcd}
	{\catC(D,D)\otimes \catC(C,D)} \arrow{r}{m} & {\catC(C,D)} & {\catC(C,D)} & {\catC(C,D)\otimes \catC(C,C)} \arrow{l}[swap]{m} \\
	{I\otimes \catC(C,D),} \arrow{u}{j\otimes 1} \arrow{ru}[swap]{l} &&& {\catC(C,D)\otimes I.} \arrow{u}[swap]{1\otimes j} \arrow{ul}{r}
	\end{tikzcd}\]
	
	We will write $C\in \catC$ to indicate that $C$ is an object of $\catC$.
\end{definition}

\begin{definition}	
	For $\moncatV$-categories $\catC$ and $\catD$, a \emph{$\moncatV$-functor} $F\colon \catC \to \catD$ consists of
	\begin{itemize}
		\item a function $F\colon \obj\catC\to\obj\catD, C \mapsto FC$;
		\item for every pair $C,D\in \catC$ of objects, a morphism $F_{CD}\colon \catC(C,D)\to\catD(FC,FD)$ of $\basecatV$
	\end{itemize}
	so that the following diagrams commute for $A,B, C\in \catC$:
	\[\begin{tikzcd}
	{\catC(B,C)\otimes\catC(A,B)} \arrow{r}{m} \arrow{d}[swap]{F_{BC}\otimes F_{AB}} & {\catC(A,C)} \arrow{d}{F_{AC}} \\
	{\catD(FB,FC)\otimes \catD(FA,FB)} \arrow{r}[swap]{m} & {\catD(FA,FC),}
	\end{tikzcd}\]%
	\[\begin{tikzcd}[row sep=tiny]
	& {\catC(C,C)} \arrow{dd}{F_{CC}} \\
	{I} \arrow{ru}{j_C} \arrow{rd}[swap]{j_{FC}} & \\
	& {\catD(FC,FC).}
	\end{tikzcd}\]
\end{definition}

\begin{definition}
	For $\moncatV$-functors $F,G\colon \catC \to \catD$, a \emph{$\moncatV$-natural transformation} $\alpha\colon F\to G$ consists of 
	\begin{itemize}
		\item for every object $C\in \catC$, a morphism $\alpha_C\colon I \to \catD(FC,GC)$ of $\basecatV$
	\end{itemize}
	so that the following $\moncatV$-naturality diagram commutes for $C,D\in \catC$:
	\[\begin{tikzcd}
	& {I\otimes\catC(C,D)} \arrow{r}{\alpha_B\otimes F} & {\catD(FD,GD)\otimes \catD(FC,FD)} \arrow{rd}{m} & \\
	{\catC(C,D)} \arrow{ru}{l^{-1}} \arrow{rd}[swap]{r^{-1}} &&& {\catD(FC,GD).} \\
	& {\catC(C,D)\otimes I}\arrow{r}[swap]{G\otimes \alpha_C} & {\catD(GC,GD)\otimes \catD(FC,GC)} \arrow{ru}[swap]{m} &
	\end{tikzcd}\]
\end{definition}

Ordinary categories are $\Set$-enriched categories, where $\Set$ denotes the cartesian category of sets; preadditive categories are $\Ab$-enriched categories, where $\Ab$ denotes the monoidal category of abelian groups; dg categories are $\Ch$-enriched categories, where $\Ch$ denotes the category of complexes of abelian groups.

The monoidal category $\moncatV$ has a natural $\moncatV$-category structure, together with the internal-hom $[\mplaceholder,\mplaceholder]$ of $\basecatV$. By abuse of notation, we will write $\catV$ for this $\moncatV$-category. For a $\moncatV$-category $\catC$, we obtain the \emph{opposite $\moncatV$-category} $\catC^\op$ by setting $\catC^\op(C,D)\coloneqq \catC(D,C)$. We also obtain the \emph{underlying category} $\catC_0$ as the ordinary category whose objects are the same as those of $\catC$ and whose Hom sets are $Hom_\basecatV(I,\catC(C,D))$. Note $(\catV)_0\cong \basecatV$.

$\moncatV$-categories, $\moncatV$-functors, and $\moncatV$-natural transformations constitute a $2$-category $\VCat$. Let $\underlying=(\mplaceholder)_0\colon \VCat\to\Cat$ be the functor taking underlying categories of enriched categories. The functor $(\mplaceholder)_0$ has a left adjoint $(\mplaceholder)_\moncatV\colon \Cat\to\VCat$ and for an ordinary category $\catL$, the $\moncatV$-category $\catL_\moncatV$ is called the \emph{free $\moncatV$-category} on $\catL$.

Two objects $C$, $D$ of a $\moncatV$-category $\catC$ are said to be \emph{isomorphic} if they are isomorphic in the underlying category $\catC_0$; then we write $C\cong D$. We say that a $\moncatV$-functor $F\colon \catC \to \catD$ is \emph{fully faithful} if all of the maps $F_{CD}\colon \catC(C,D)\to \catD(FC,FD)$ between Hom objects are isomorphisms, and that $F$ is \emph{essentially surjective} if for every $D\in \catD$, there is an object $C\in \catC$ such that $F(C)\cong D$. A $\moncatV$-functor is called an \emph{equivalence} if it is fully faithful and essentially surjective.

For $\moncatV$-categories $\catC$ and $\catD$, let $\Fun(\catC,\catD)$ denote the ordinary category of $\moncatV$-functors $\catC\to\catD$ and $\moncatV$-natural transformations. If $F,G\colon \catC \to \catD$ are $\moncatV$-functors, write the morphisms that correspond under the tensor--internal-hom adjunction to the composites
\begin{align*}
\catD(FC,GC)\otimes\catC(C,D) \xrightarrow{1\otimes G_{CD}} \catD(FC,GC)\otimes \catD(GC,GD) & \\
\xrightarrow{\makebox[1em]{$\scriptstyle c$}} \catD(GC,GD)\otimes \catD(FC,GC) 
&\xrightarrow{m} \catD(FC,GD), \\
\catD(FD,GD)\otimes \catC(C,D) \xrightarrow{1\otimes F_{CD}} \catD(FD,GD)\otimes \catD(FC,FD) &\xrightarrow{m} \catD(FC,GD)
\end{align*}
as
\begin{align*}
\tau_{CD} &\colon \catD(FC,GC)\to [\catC(C,D),\catD(FC,GD)], \\
\xi_{CD} &\colon \catD(FD,GD)\to [\catC(C,D),\catD(FC,GD)], 
\end{align*}
respectively. Define $[\catC,\catD](F,G) \in \basecatV$ by the following equalizer diagram:
\[
\begin{tikzcd}
{[\catC,\catD]}(F,G) \arrow{r} & \prod_{C\in\catC} \catD(FC,GC) \arrow[shift left=0.7ex]{r}{\prod \tau_{CD}} \arrow[shift right=0.7ex]{r}[swap]{\prod \xi_{CD}} & \prod_{C,D\in \catC} {[\catC(C,D),\catD(FC,GD)].}
\end{tikzcd}
\]
Then we have the \emph{functor enriched category} $[\catC,\catD]$ of enriched functors with its Hom object $[\catC,\catD](F,G)$; its underlying category $[\catC,\catD]_0$ is isomorphic to $ \Fun(\catC,\catD)$. If $\obj\catC$ is a set, then $\moncatV$-category $\catC$ is said to be \emph{small}. If $\catC$ is small, the equalizer $[\catC,\catD](F,G)$ exists, which implies the existence of the functor enriched category $[\catC,\catD]$. We also have the Yoneda embedding $\yoneda\colon \catC\to[\catC^\op,\catV]$, which is a $\moncatV$-functor that sends each object $C\in \catC$ to the representable $\moncatV$-functor $\catC(\mplaceholder,C)$.

As in ordinary category theory, the Yoneda lemma holds for enriched categories.

\begin{theorem}[{The enriched Yoneda lemma~\cite[(2.31)]{Kelly:1982Basic}}]\label{thm:enriched_Yoneda_lemma}
	For a $\moncatV$-functor $F\colon \catC \to \catV$ and an object $C\in \catC$, there is an isomorphism
	\[ [\catC,\catV](\catC(C,\mplaceholder),F) \cong FC \]
	which is natural in $F$ and $C$.
\end{theorem}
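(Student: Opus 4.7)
The plan is to build the isomorphism explicitly as a pair of mutually inverse morphisms in $\basecatV$, and then read off naturality. Write $L\coloneqq [\catC,\catV](\catC(C,\mplaceholder),F)$. In one direction, an \emph{evaluation} morphism $\mathrm{ev}_C\colon L\to FC$ is obtained by first projecting from the equalizer $L$ onto the $C$-th factor $[\catC(C,C),FC]$ of $\prod_{D\in\catC}[\catC(C,D),FD]$, then precomposing with the identity map $j_C\colon I\to \catC(C,C)$ of $\catC$ using the action $[j_C,1]\colon [\catC(C,C),FC]\to [I,FC]\cong FC$.

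In the other direction, I would construct a \emph{representation} morphism $\rho_C\colon FC\to L$ as follows. The $\moncatV$-functor structure of $F$ gives, for each $D\in\catC$, a map $F_{CD}\colon \catC(C,D)\to [FC,FD]$; transposing across the internal-hom adjunction yields $\widetilde{F}_{CD}\colon FC\to [\catC(C,D),FD]$. Bundling these over $D$ produces $FC\to \prod_{D\in\catC}[\catC(C,D),FD]$. The nontrivial step is to verify that this morphism equalizes the two maps $\prod\tau$ and $\prod\xi$ defining $L$; concretely, one must show that the pasting corresponding to $\widetilde{F}_{CD}$ followed by the two different uses of representable action on $\catC(D,E)$ agree. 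This reduces, after transposing back and using the bicategorical yoga of the closed structure, to the equation $F_{CE}\circ m_{CDE}=m_{FC,FD,FE}\circ(F_{DE}\otimes F_{CD})$, which is precisely the composition axiom for the $\moncatV$-functor $F$. Hence $\rho_C$ lands in the equalizer $L$.

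Next I would verify that $\rho_C$ and $\mathrm{ev}_C$ are mutually inverse. The composite $\mathrm{ev}_C\circ\rho_C\colon FC\to FC$ unwinds, using the explicit formulas, to the transpose of $F_{CC}\circ j_C$ precomposed with $I\to FC\otimes I$; by the unit axiom for the $\moncatV$-functor $F$ (namely $F_{CC}\circ j_C=j_{FC}$) together with the right-unit isomorphism, this reduces to the identity on $FC$. The other composite $\rho_C\circ \mathrm{ev}_C$ is more delicate: one has to show that for an ``element'' $\alpha\colon I\to L$ in the underlying category, the associated $\moncatV$-natural transformation reconstructed from $\alpha_C\circ j_C$ agrees with $\alpha$ itself componentwise, which is exactly the content of the $\moncatV$-naturality hexagon for $\alpha$ (applied to the pair $(C,D)$ with one leg ending at the identity at $C$). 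Once the equality holds internally (after transposing), both composites are identities in $\basecatV$.

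Finally, naturality of $\mathrm{ev}_C$ in $F$ is automatic because both projection and $[j_C,1]$ are natural in $F$; naturality in $C$ follows from the compatibility of $\rho_C$ with the representable functors $\catC(\mplaceholder,C)$, which again reduces to the composition axiom of $F$. I expect the main obstacle to be the equalizer-factorization step for $\rho_C$: it requires carefully transposing the hexagonal $\moncatV$-naturality diagram back to a statement purely about $\otimes$ and $m$, where the $\moncatV$-functor axioms for $F$ can be invoked. Everything else is essentially unpacking the definitions.
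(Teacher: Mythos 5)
The paper does not actually prove this statement---it is imported verbatim from Kelly~[(2.31)]---so there is no in-paper argument to compare against; your sketch is, in outline, the standard proof of the (weak) enriched Yoneda lemma, and it is essentially correct. The evaluation map $\mathrm{ev}_C$ (project the equalizer onto the $C$-th factor, then evaluate at $j_C$), the candidate inverse $\rho_C$ assembled from the transposes $\widetilde{F}_{CD}\colon FC\to[\catC(C,D),FD]$ of the structure maps of $F$, the factorization of $\rho_C$ through the equalizer via the composition axiom for $F$, and the identity $\mathrm{ev}_C\circ\rho_C=\mathrm{id}_{FC}$ via the unit axiom $F_{CC}\circ j_C=j_{FC}$ are all exactly as in Kelly's proof.

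One point to tighten: for $\rho_C\circ\mathrm{ev}_C=\mathrm{id}_L$ you phrase the verification in terms of global elements $\alpha\colon I\to L$. In a general cosmos $I$ is not a generator of $\basecatV$ (in $\Ch$, for instance, maps out of the unit detect only degree-zero cocycles), so agreement on such elements does not by itself give equality of morphisms. The correct---and in fact cleaner---formulation is the one you gesture at with ``after transposing'': since $L\to\prod_{D}[\catC(C,D),FD]$ is a monomorphism, it suffices to check each component $L\to[\catC(C,D),FD]$, and the required identity is precisely the $(C,D)$-component of the equalizer condition $\tau_{CD}\circ\mathrm{pr}_C=\xi_{CD}\circ\mathrm{pr}_D$ post-composed with evaluation of the inner hom at $j_C\colon I\to\catC(C,C)$: the $\xi$-side collapses to $\mathrm{pr}_D$ by the unit axiom of $\catC$, while the $\tau$-side becomes $\widetilde{F}_{CD}\circ\mathrm{ev}_C$. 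Stated this way the argument is a morphism-level computation with no appeal to elements, and naturality in $F$ and $C$ then follows as you describe.
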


Next we recap the notion of limits for enriched categories. It is wider than that of limits in ordinary category theory in that ordinary limits correspond to the so-called conical limits, a particular kind of limits in enriched categories.

\begin{definition}
	Let $\catJ$, $\catC$ be $\moncatV$-categories and $F\colon\catJ \to \catC$ be a $\moncatV$-functor. For a $\moncatV$-functor $W\colon \catJ\to\catV$, a \emph{limit of $F$ weighted by $W$} is defined as an object $\{W,F\}\in \catC$ together with an isomorphism
	\[ \catC(C,\{W,F\}) \cong [\catJ,\catV](W,\catC(C,F\mplaceholder)) \]
	natural in $C\in \catC$.
	
	For a $\moncatV$-functor $W\colon \catJ^\op\to\catV$, a \emph{colimit of $F$ weighted by $W$} is defined as an object $W\wcolim F\in \catC$ together with an isomorphism
	\[ \catC(W\wcolim F,C) \cong [\catJ^\op,\catV](W,\catC(F\mplaceholder,C)) \]
	natural in $C\in \catC$.
	
	Here $W$ is called a \emph{weight} of the limit or colimit; if $\catJ$ is small, $W$ is said to be \emph{small}. We call a $\moncatV$-category $\catC$ \emph{complete} if it admits all small limits, and \emph{cocomplete} if it admits all small colimits.
\end{definition}

Note that the $\moncatV$-category $\catV$ is complete and cocomplete, and so is the presheaf $\moncatV$-category $[\catC^\op,\catV]$ on a small $\moncatV$-category $\catC$.

As a special case of weighted limits, we can define a limit for an ordinary functor $H\colon\catL\to\catC_0$ from an ordinary category to the underlying category of a $\moncatV$-enriched category. Recall that the functor $(\mplaceholder)_0$ that takes underlying categories has the left adjoint $(\mplaceholder)_\moncatV$. Under this adjunction, $H$ corresponds to the $\moncatV$-functor $\widetilde{H}\colon \catL_\moncatV\to\catC$ and the constant functor $\Delta I\colon \catL \to \catV_0$ at the unit object $I$ corresponds to the $\moncatV$-functor $\widetilde{\Delta I}\colon \catL_\moncatV\to\catV$. Then we define a \emph{conical limit of $H$} as the limit $\{\widetilde{\Delta I}, \widetilde{H}\}$ of $\widetilde{H}$ weighted by $\widetilde{\Delta I}$. If the conical limit of $H$ exists in $\catC$, then it becomes an ordinary limit $\lim H$ of $H\colon \catL \to \catC_0$ in $\catC_0$ (see \cite[(3.53)]{Kelly:1982Basic}). Hence the completeness of $\catC$ implies that of the ordinary category $\catC_0$. Dually, we obtain the notion of conical colimits.

\begin{definition}
	Let $\catC$ be a $\moncatV$-category. For $X\in \catV$ and $D\in \catC$, a \emph{cotensor product} of $X$ and $D$ is defined as an object $X\cotensor D\in \catC$ together with an isomorphism
	\[ \catC(C, X\cotensor D) \cong [X,\catC(C,D)] \]
	natural in $C\in \catC$. This is a special case of weighted limits.
	
	A \emph{tensor product} of $X$ and $D$ is defined as an object $X\tensor D\in \catC$ together with an isomorphism
	\[ \catC(X\tensor D,C) \cong [X,\catC(D,C)] \]
	natural in $C\in \catC$. This is a special case of weighted colimits.
	
	A $\moncatV$-category $\catC$ is called \emph{cotensored} if it admits all cotensor products, and \emph{tensored} if it admits all tensored products.
\end{definition}

For example, since we have $[C,[X,D]]\cong [X,[C,D]]$ in the $\moncatV$-category $\catV$ by the tensor--internal-hom adjunction, the cotensor product of objects $X$ and $D$ of $\catV$ is the internal-hom $[X,D]$. In a similar manner, we see that the tensor product of $X$ and $D$ in $\catV$ is the monoidal product $X\otimes D$.

\begin{definition}
	Let $\catC$ be a $\moncatV$-category and $\Hom_\catC\colon \catC^\op\otimes\catC\to \catV$ denote its Hom $\moncatV$-functor. For a $\moncatV$-functor $G\colon \catC^\op\otimes\catC\to\catD$, an \emph{end} of $G$ is defined to be the limit of $G$ weighted by $\Hom_\catC$, written as
	\[ \int_{C\in \catC} G(C,C) \coloneqq \{\Hom_\catC,G\}. \]
	
	A \emph{coend} of $G$ is defined to be the colimit of $G$ weighted by $\Hom_\catC$, written as 
	\[ \int^{C\in \catC} G(C,C) \coloneqq \Hom_\catC \wcolim G. \]
\end{definition}

For example, the Hom object $[\catC,\catD](F,G)$ between $\moncatV$-functors $F,G\colon \catC \to \catD$ is the end of the $\moncatV$-functor $\catD(F\mplaceholder,G\mplaceholder)\colon \catC^\op \otimes \catC \to \catV$ (see \cite[(2.10)]{Kelly:1982Basic}): that is,
\[ [\catC,\catD](F,G) \cong \int_{C\in \catC} \catD(FC,GC). \]

We have introduced three particular kinds of weighted limits: conical limits, cotensor products, and ends. We next explain the relationship among them.

\begin{proposition}[{\cite[\S3.8]{Kelly:1982Basic}}]\label{prop:tensored_and_ordinary_limit_implies_conical_limit}
	Let $\catC$ be a $\moncatV$-category and $H\colon \catL\to\catC_0$ be an ordinary functor from an ordinary category $\catL$. Suppose $\catC$ is tensored. Then the conical limit $\{\Delta I, \widetilde{H}\}$ of $H$ exists if and only if the ordinary limit $\lim H$ of $H$ exists in $\catC_0$.
	
	Dually, provided that $\catC$ is cotensored, the conical colimit of $H$ exists if and only if the ordinary colimit $\colim H$ of $H$ exists.
\end{proposition}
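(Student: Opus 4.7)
The ``only if'' direction --- existence of the conical limit forces existence of the ordinary limit in $\catC_0$ --- is the observation recalled just before the proposition statement (see \cite[(3.53)]{Kelly:1982Basic}); no tensoring is needed for this. The substance of the proposition is the converse, and my plan is to verify directly that when $\catC$ is tensored the ordinary limit $L:=\lim H$ in $\catC_0$ already satisfies the defining universal property of the conical limit $\{\widetilde{\Delta I},\widetilde{H}\}$. Concretely, I would exhibit, naturally in $C\in\catC$, an isomorphism
$$\catC(C,L)\;\cong\;[\catL_\moncatV,\catV]\bigl(\widetilde{\Delta I},\,\catC(C,\widetilde{H}\mplaceholder)\bigr)$$
of objects of $\basecatV$.

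For the right-hand side, unfolding the end formula for hom-objects of enriched functor categories recalled in the excerpt gives
$$[\catL_\moncatV,\catV]\bigl(\widetilde{\Delta I},\,\catC(C,\widetilde{H}\mplaceholder)\bigr)\;\cong\;\int_{j\in\catL_\moncatV}[I,\catC(C,Hj)]\;\cong\;\int_{j\in\catL_\moncatV}\catC(C,Hj).$$
Because $\catL_\moncatV$ is \emph{free} on $\catL$, the adjunction $(\mplaceholder)_\moncatV\dashv(\mplaceholder)_0$ identifies $\moncatV$-functors $\catL_\moncatV\to\catV$ with ordinary functors $\catL\to\basecatV$ and $\moncatV$-natural transformations between them with ordinary natural transformations; more concretely, the hom-objects $\catL_\moncatV(j,k)=\coprod_{\catL(j,k)}I$ split the enriched-naturality equalizer into one equation per morphism of $\catL$. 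Consequently the above end collapses to the equalizer computing the ordinary limit $\lim_\catL\catC(C,H\mplaceholder)$ in $\basecatV$.

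It now remains to identify $\catC(C,L)$ with $\lim_\catL\catC(C,H\mplaceholder)$ in $\basecatV$, and this is where the tensoring hypothesis enters. The tensor--hom adjunction supplies $\catC_0(X\otimes C,D)\cong\basecatV(X,\catC(C,D))$ naturally in $X\in\basecatV$ and $D\in\catC$; combined with $L=\lim H$ in $\catC_0$ this gives
$$\basecatV\bigl(X,\catC(C,L)\bigr)\;\cong\;\catC_0(X\otimes C,L)\;\cong\;\lim_\catL\catC_0(X\otimes C,H\mplaceholder)\;\cong\;\basecatV\bigl(X,\lim_\catL\catC(C,H\mplaceholder)\bigr)$$
naturally in $X$, whence the desired isomorphism follows by the Yoneda lemma in $\basecatV$. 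Naturality in $C$ is routine. The colimit case is entirely dual: the identical reduction of the end applies, and one substitutes the cotensor--hom adjunction $\catC_0(D,X\cotensor C)\cong\basecatV(X,\catC(D,C))$ for the tensor one. The delicate point on which the whole argument hinges is the second step, namely the identification of the $\moncatV$-end over $\catL_\moncatV$ with an ordinary limit over $\catL$; once freeness of $\catL_\moncatV$ is used there, everything else is formal Yoneda-plus-adjunction bookkeeping, with tensoring (respectively cotensoring) supplying exactly the adjunction that promotes the set-level universal property of $\lim H$ to a $\basecatV$-level universal property.
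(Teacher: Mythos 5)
Your argument is correct. The paper itself gives no proof of this proposition --- it is quoted verbatim from Kelly \S 3.8 --- and your proof is essentially Kelly's: the only-if direction is the general fact that conical limits are underlying limits, and for the converse the tensoring hypothesis is used exactly where you use it, namely to make $\catC(C,\mplaceholder)\colon\catC_0\to\basecatV$ a right adjoint (to $\mplaceholder\otimes C$) and hence limit-preserving, so that the $\basecatV$-level cylinder condition $\catC(C,L)\cong\lim_\catL\catC(C,H\mplaceholder)$ follows from the $\Set$-level one; the identification of $[\catL_\moncatV,\catV](\widetilde{\Delta I},\catC(C,\widetilde{H}\mplaceholder))$ with that ordinary limit via the freeness of $\catL_\moncatV$ is also the standard step and you carry it out correctly.
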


\begin{proposition}[{\cite[\S3.10]{Kelly:1982Basic}}]\label{prop:end_is_conical_limit_of_cotensor}
	Let $\catC$, $\catD$ be $\moncatV$-categories and $G\colon \catC^\op\otimes\catC\to\catD$ be a $\moncatV$-functor. Suppose that $\catC$ is small and that $\catD$ is cotensored and admits all small conical limits. Then the end $\int_{C\in \catC} G(C,C)$ of $G$ exists.
	
	Dually, provided that $\catD$ is tensored and admits all small conical colimits, then the coend $\int^{C\in \catC} G(C,C)$ of $G$ exists.
\end{proposition}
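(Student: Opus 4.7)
The plan is to construct the end explicitly as an equalizer of products of cotensors, all of which exist by the hypotheses, and then to verify the universal property by a Yoneda-style reduction to the known equalizer formula for ends in $\catV$.

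More precisely, first I would form, for each pair $(C,D)\in\catC\times\catC$, the cotensor product $\catC(C,D)\cotensor G(C,D)$ in $\catD$; this exists since $\catD$ is cotensored by hypothesis. Next, using that $\obj\catC$ is a set and that $\catD$ admits small conical limits (equivalently, by the cotensored dual of \cref{prop:tensored_and_ordinary_limit_implies_conical_limit}, that $\catD_0$ has small products), I would form the two products
\[
\prod_{C\in\catC} G(C,C), \qquad \prod_{C,D\in\catC} \catC(C,D)\cotensor G(C,D)
\]
as conical products in $\catD$. Then I would define two parallel morphisms $\lambda,\rho$ between these products: the first comes, componentwise, from the map $G(C,C)\to \catC(C,D)\cotensor G(C,D)$ adjoint to $\catC(C,D)\to \catD(G(C,C),G(C,D))$ encoding the covariant action of $G$ on the second variable, and the second is built dually from the contravariant action of $G$ on the first variable. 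Finally, I would form the equalizer $E$ of $\lambda$ and $\rho$ in $\catD_0$ (again a conical limit, which exists), and propose $E$ as the candidate for $\int_{C\in\catC}G(C,C)$.

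To check that $E$ has the correct universal property, I would apply $\catD(B,\mplaceholder)$ for an arbitrary $B\in\catD$. Since representables preserve limits, $\catD(B,\mplaceholder)$ sends the proposed equalizer in $\catD$ to an equalizer in $\catV$; using the cotensor adjunction $\catD(B,\catC(C,D)\cotensor G(C,D))\cong [\catC(C,D),\catD(B,G(C,D))]$ and the compatibility of representables with conical products, one sees that the resulting equalizer in $\catV$ is precisely the standard equalizer presentation
\[
\int_{C}\catD(B,G(C,C)) \to \prod_{C}\catD(B,G(C,C)) \rightrightarrows \prod_{C,D}\bigl[\catC(C,D),\catD(B,G(C,D))\bigr]
\]
of the end in $\catV$ (which exists as a small limit in the complete $\moncatV$-category $\catV$). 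This yields a natural isomorphism $\catD(B,E)\cong [\catC^{\op}\otimes\catC,\catV]\bigl(\Hom_\catC,\catD(B,G(\mplaceholder,\mplaceholder))\bigr)$, which by definition identifies $E$ with $\{\Hom_\catC,G\}=\int_{C\in\catC}G(C,C)$. The coend case is dual: one replaces cotensors by tensors, products by coproducts, and equalizers by coequalizers.

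The main obstacle I anticipate is bookkeeping: writing down $\lambda$ and $\rho$ correctly so that, after applying $\catD(B,\mplaceholder)$, they become precisely the two maps appearing in the standard end-formula for $\catD(B,G(\mplaceholder,\mplaceholder))$. Once this identification is set up—using repeatedly the defining adjunctions of cotensors and conical products, together with the fact that $G$ is a genuine $\moncatV$-bifunctor so that its structure maps $\catC(C,C')\otimes\catC(D',D)\to\catD(G(C',D'),G(C,D))$ transpose to the expected cotensor comparisons—the verification of the universal property becomes a routine diagram chase, and the enriched Yoneda lemma (\cref{thm:enriched_Yoneda_lemma}) closes the argument.
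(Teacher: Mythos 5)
Your construction is exactly the one in the cited source (Kelly, \S3.10, formula (3.68)): the end over a small $\catC$ into a cotensored $\catD$ with small conical limits is the conical equalizer of the two canonical maps $\prod_{C} G(C,C) \rightrightarrows \prod_{C,D} \catC(C,D)\cotensor G(C,D)$, verified by applying representables and the cotensor adjunction to reduce to the equalizer presentation of ends in $\catV$. The paper gives no independent proof but defers to precisely this argument, so your proposal is correct and matches the intended proof.
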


\begin{proposition}[{\cite[(3.69), (3.70)]{Kelly:1982Basic}}]\label{prop:limit_is_end_of_cotensor}
	Let $\catJ$, $\catC$ be $\moncatV$-categories and $F\colon\catJ \to \catC$ be a $\moncatV$-functor.
	\begin{enumerate}
		\item If $\catC$ is cotensored, then for any $\moncatV$-functor $W\colon \catJ \to \catV$, we have
		\[ \{W,F\} \cong \int_{J\in \catJ} WJ \cotensor FJ, \]
		either side existing if the other does.
		
		\item If $\catC$ is tensored, then for any $\moncatV$-functor $W\colon \catJ^\op \to \catV$, we have
		\[ W\wcolim F \cong \int^{J\in \catJ} WJ \tensor FJ, \]
		either side existing if the other does.
	\end{enumerate}
\end{proposition}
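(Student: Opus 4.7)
The plan is to establish (i) by chaining natural isomorphisms in $C \in \catC$ and invoking the enriched Yoneda lemma; statement (ii) will then follow formally by applying (i) to $\catC^\op$, which interchanges tensors with cotensors and coends with ends.

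For (i), I would start from the defining universal property
\[ \catC(C,\{W,F\}) \cong [\catJ,\catV](W, \catC(C, F\mplaceholder)) \]
and expand the right-hand side as an end using the formula $[\catJ,\catV](F',G') \cong \int_{J\in\catJ} [F'J, G'J]$ recalled just before \cref{prop:tensored_and_ordinary_limit_implies_conical_limit}. This gives
\[ [\catJ,\catV](W, \catC(C, F\mplaceholder)) \cong \int_{J\in\catJ} [WJ, \catC(C, FJ)]. \]
The defining isomorphism of the cotensor rewrites the integrand as $[WJ, \catC(C, FJ)] \cong \catC(C, WJ \cotensor FJ)$, and since representable $\moncatV$-functors preserve all weighted limits, and in particular ends, we obtain
\[ \int_{J\in\catJ} \catC(C, WJ\cotensor FJ) \cong \catC\Bigl(C, \int_{J\in\catJ} WJ\cotensor FJ\Bigr) \]
whenever the end on the right exists in $\catC$. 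Composing these natural isomorphisms and applying the enriched Yoneda lemma (\cref{thm:enriched_Yoneda_lemma}) yields the desired identification $\{W,F\} \cong \int_{J\in\catJ} WJ\cotensor FJ$.

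For the clause ``either side existing if the other does,'' the key observation is that the end $\int_{J\in\catJ}[WJ,\catC(C,FJ)]$ always exists in $\catV$ because $\catV$ is complete, so the first two natural isomorphisms combine into a well-defined $\moncatV$-functor $\catC^\op \to \catV$ sending $C$ to this end. Each of $\{W,F\}$ and $\int_J WJ\cotensor FJ$ is, by its own universal property, a representing object for this functor, so the existence of either yields the other together with the canonical isomorphism between them. The main point requiring care is to verify that the chain of isomorphisms above is $\moncatV$-natural in $C$ rather than merely ordinarily natural, but each link is visibly so, being built from the $\moncatV$-functoriality of the internal-hom, of cotensors, and of representables. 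Statement (ii) then follows by applying (i) to $\catC^\op$, using that a weighted colimit in $\catC$ is a weighted limit in $\catC^\op$ and that the tensor in $\catC$ is the cotensor in $\catC^\op$.
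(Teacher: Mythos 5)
Your proof is correct and is essentially the standard derivation of Kelly's (3.69)--(3.70), which is exactly what the paper relies on here (it cites Kelly rather than giving its own argument): expand $[\catJ,\catV](W,\catC(C,F\mplaceholder))$ as an end of internal homs, rewrite the integrand via the cotensor isomorphism, pull the end inside the representable, and conclude by the enriched Yoneda lemma, with the two-sided existence clause handled by noting that both objects represent the same $\moncatV$-functor $\catC^\op\to\catV$. The only implicit hypothesis worth flagging is that the end $\int_{J}[WJ,\catC(C,FJ)]$ exists in $\catV$ (e.g.\ when $\catJ$ is small, as in all uses in the paper), which is already presupposed by the definition of the weighted limit itself.
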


\begin{theorem}[{\cite[Thm.\ 3.73]{Kelly:1982Basic}}]\label{thm:complete=cotensored_and_small_conical_limit}
	A $\moncatV$-category $\catC$ is complete if and only if it is cotensored and admits all small conical limits. 
	
	Daully, $\catC$ is cocomplete if and only if it is tensored and admits all small conical colimits. 
\end{theorem}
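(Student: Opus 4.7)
The plan is to deduce both implications directly from the preceding \cref{prop:end_is_conical_limit_of_cotensor} and \cref{prop:limit_is_end_of_cotensor}, with essentially no further calculation. The forward direction is immediate: completeness by definition supplies every small weighted limit, and cotensor products (as weighted limits on a one-object $\moncatV$-category) together with conical limits (weighted limits by definition) are both particular cases of small weighted limits.

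For the converse, I would suppose $\catC$ is cotensored and admits all small conical limits. Given a small $\moncatV$-category $\catJ$, a weight $W\colon\catJ\to\catV$, and a $\moncatV$-functor $F\colon\catJ\to\catC$, my first step would be to assemble the $\moncatV$-functor
\[
G\colon \catJ^\op\otimes\catJ\to\catC, \qquad (J,J')\mapsto WJ\cotensor FJ',
\]
which is well-defined because $\catC$ is cotensored. Then \cref{prop:end_is_conical_limit_of_cotensor}, with $\catC$ playing the role of the target, guarantees that the end $\int_{J\in\catJ} WJ\cotensor FJ$ exists in $\catC$. Finally, \cref{prop:limit_is_end_of_cotensor} produces an isomorphism $\{W,F\}\cong \int_{J\in\catJ} WJ\cotensor FJ$, either side existing if the other does, so $\{W,F\}$ exists. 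The cocomplete statement is entirely dual, obtained by passing to $\catC^\op$ and invoking the dual halves of the same two propositions.

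There is no substantive obstacle here: the content of the theorem has already been packaged into the two preceding propositions, and the present statement is merely their clean synthesis. The only formal point requiring attention is the $\moncatV$-functoriality of the composite $G$ above, but this is routine given the $\moncatV$-functoriality of $W$ and $F$ together with the universal property defining cotensor products; this is the sort of bookkeeping I would carry out once, carefully, and then invoke without further comment for the dual case.
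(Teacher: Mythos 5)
Your proposal is correct and is essentially the argument of the cited source: the paper gives no proof of this theorem (it simply cites Kelly, Thm.\ 3.73), and Kelly's proof is precisely the synthesis you describe, combining \cref{prop:limit_is_end_of_cotensor} (the expression $\{W,F\}\cong\int_{J}WJ\cotensor FJ$) with \cref{prop:end_is_conical_limit_of_cotensor} (existence of that end from cotensors and small conical limits), plus the routine observation that $\cotensor$ is a $\moncatV$-functor $\catV^\op\otimes\catC\to\catC$ when $\catC$ is cotensored. No gaps; the dual statement follows as you say.
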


\begin{corollary}\label{cor:C_0_complete_implies_C_complete}
	A cotensored and tensored $\moncatV$-category $\catC$ is complete (or cocomplete) if $\catC_0$ is so.
\end{corollary}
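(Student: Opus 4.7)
The plan is to chain together the two key results of this subsection without introducing any new machinery. Given that $\catC$ is both tensored and cotensored and that $\catC_0$ is complete (respectively cocomplete), I would first promote each ordinary limit (colimit) in $\catC_0$ to a conical limit (colimit) in $\catC$, and then combine the conical limits with the cotensors to obtain full enriched completeness via \cref{thm:complete=cotensored_and_small_conical_limit}.

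Concretely, for the completeness case I would reduce the goal to the two conditions of \cref{thm:complete=cotensored_and_small_conical_limit}: that $\catC$ is cotensored, which is part of the hypothesis, and that $\catC$ admits all small conical limits. For the latter, let $H\colon \catL \to \catC_0$ be a functor from a small ordinary category. Since $\catC$ is tensored by hypothesis and $\lim H$ exists in $\catC_0$ by the assumption on $\catC_0$, \cref{prop:tensored_and_ordinary_limit_implies_conical_limit} produces the required conical limit $\{\widetilde{\Delta I}, \widetilde{H}\}$ in $\catC$. The cocompleteness case is dual: I would apply the dual halves of \cref{prop:tensored_and_ordinary_limit_implies_conical_limit} and \cref{thm:complete=cotensored_and_small_conical_limit}, using the cotensoring hypothesis to lift ordinary colimits in $\catC_0$ to conical colimits in $\catC$.

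The only subtlety worth flagging is the asymmetric role of tensoring and cotensoring in \cref{prop:tensored_and_ordinary_limit_implies_conical_limit}: upgrading an ordinary limit in $\catC_0$ to a conical limit in $\catC$ requires $\catC$ to be \emph{tensored}, not cotensored, while the dual statement for colimits requires cotensoring. This is precisely why the hypothesis of the corollary bundles both conditions together. Beyond invoking each proposition on the correct side, there is no substantial obstacle; the proof is a direct assembly of the cited results.
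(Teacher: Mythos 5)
Your proposal is correct and follows exactly the paper's own argument: apply \cref{prop:tensored_and_ordinary_limit_implies_conical_limit} to upgrade ordinary (co)limits in $\catC_0$ to conical ones in $\catC$, then invoke \cref{thm:complete=cotensored_and_small_conical_limit}. Your remark about the asymmetric roles of tensoring and cotensoring is accurate and a helpful clarification of why both hypotheses are needed.
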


\begin{proof}
	If $\catC_0$ is complete, then by \cref{prop:tensored_and_ordinary_limit_implies_conical_limit} $\catC$ admits all small conical limits. Thus \cref{thm:complete=cotensored_and_small_conical_limit} shows that $\catC$ is complete as an enriched category.
\end{proof}

\begin{proposition}[{\cite[(3.71), (3.72)]{Kelly:1982Basic}}]\label{prop:coYoneda_lemma}
	Let $\catJ$, $\catC$ be $\moncatV$-categories and $F\colon \catJ\to\catC$ be a $\moncatV$-functor. If the following limits or colimits exist, then we have
	\begin{align*}
		\int_{A\in \catJ}\catJ(J,A) \cotensor FA &\cong FJ, \\
		\int^{A\in \catJ} \catJ(A,J) \tensor FA &\cong FJ.
	\end{align*}
\end{proposition}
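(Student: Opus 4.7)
The plan is to prove both isomorphisms by exhibiting $FJ$ as representing the same $\moncatV$-functor that the end (resp.\ coend) represents, so that the (enriched) Yoneda lemma forces the two objects to coincide. I will do the first isomorphism carefully; the second will be dual.

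Assume the end $E \coloneqq \int_{A\in\catJ}\catJ(J,A)\cotensor FA$ exists (so in particular all the cotensors $\catJ(J,A)\cotensor FA$ exist in $\catC$). For an arbitrary object $C\in\catC$, I compute $\catC(C,E)$. Since an end is a weighted limit and the representable $\moncatV$-functor $\catC(C,\mplaceholder)\colon \catC\to\catV$ preserves weighted limits, I get
\[
\catC(C,E) \;\cong\; \int_{A\in\catJ} \catC\bigl(C,\catJ(J,A)\cotensor FA\bigr).
\]
By the defining universal property of the cotensor product, the integrand is naturally isomorphic to $[\catJ(J,A),\catC(C,FA)]$, so
\[
\catC(C,E) \;\cong\; \int_{A\in\catJ} \bigl[\catJ(J,A),\catC(C,FA)\bigr].
\]
Now the right-hand side is exactly the Hom object of the $\moncatV$-functor category $[\catJ,\catV]$ between the representable $\catJ(J,\mplaceholder)$ and the $\moncatV$-functor $\catC(C,F\mplaceholder)\colon \catJ\to \catV$, by the formula $[\catJ,\catV](F',G')\cong \int_{A}[\catJ(J,A),\catC(C,FA)]$ recalled right after \cref{thm:enriched_Yoneda_lemma}. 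Applying the enriched Yoneda lemma (\cref{thm:enriched_Yoneda_lemma}) to the $\moncatV$-functor $\catC(C,F\mplaceholder)$ yields
\[
\bigl[\catJ,\catV\bigr]\bigl(\catJ(J,\mplaceholder),\catC(C,F\mplaceholder)\bigr) \;\cong\; \catC(C,FJ).
\]
All of these isomorphisms are natural in $C\in\catC$, so by the ordinary Yoneda lemma applied in $\catC_0$ we conclude $E\cong FJ$.

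For the second isomorphism, the argument is completely dual: assuming the coend $E'\coloneqq \int^{A\in\catJ}\catJ(A,J)\tensor FA$ exists, I test with $\catC(\mplaceholder,C)\colon \catC^{\op}\to \catV$, use that this representable sends colimits (hence coends and tensors) to limits, identify the resulting end with $[\catJ^\op,\catV](\catJ(\mplaceholder,J),\catC(F\mplaceholder,C))$, and apply the enriched Yoneda lemma in $\catJ^\op$ to conclude that $\catC(E',C)\cong \catC(FJ,C)$ naturally in $C$.

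There is no real obstacle here; the argument is a straightforward unwinding of definitions and a single invocation of the enriched Yoneda lemma. The only point demanding minor care is the clause \emph{either side existing if the other does}: since $FJ$ always exists, what needs checking is that if the end on the left exists then it is isomorphic to $FJ$, which is exactly what the calculation above provides. The converse direction (producing the end from $FJ$) is subsumed in the same chain of natural isomorphisms, which exhibits $FJ$ as a concrete object satisfying the universal property of the weighted limit $\{\catJ(J,\mplaceholder),F\}$.
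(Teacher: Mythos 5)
Your argument is correct and is essentially the standard proof (the one in Kelly, which the paper simply cites without reproving): test against representables, use the defining isomorphisms for cotensors and for hom-objects of functor categories as ends, and finish with the enriched Yoneda lemma. The only point worth noting is that for the expression $\int_{A}\catJ(J,A)\cotensor FA$ to be defined as an end one needs the cotensors $\catJ(J,A)\cotensor FB$ for all pairs $(A,B)$, not just the diagonal ones, but this is subsumed in the existence hypothesis and does not affect your argument.
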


We can also consider adjunctions and Kan extensions in enriched category theory.

\begin{definition}
	Let $F\colon \catC\to \catD$ and $G\colon \catD \to \catC$ be $\moncatV$-functors. The pair $(F,G)$ is called a \emph{$\moncatV$-adjunction} if there is an isomorphism
	\[ \catD(FC,D) \cong \catC(C,GD) \]
	natural in $C\in \catC$ and $D \in \catD$. If $(F,G)$ is a $\moncatV$-adjunction, then we write $F\dashv G$ and call $F$ a \emph{left adjoint} and $G$ a \emph{right adjoint}.
\end{definition}

\begin{proposition}[{\cite[\S1.11]{Kelly:1982Basic}}]\label{prop:right_adjoint_is_fullyfaithful_iff_its_underlying_is_so}
	A right adjoint is fully faithful if and only if its underlying functor is so.
\end{proposition}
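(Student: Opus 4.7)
The plan is to prove both directions using the counit $\varepsilon\colon FG\Rightarrow \id_\catD$ of the $\moncatV$-adjunction $F\dashv G$.

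For the easy direction $(\Rightarrow)$: if $G$ is fully faithful as a $\moncatV$-functor, then each component $G_{D,D'}\colon \catD(D,D')\to\catC(GD,GD')$ is an isomorphism in $\basecatV$. Applying the representable functor $\basecatV(I,\mplaceholder)$ and recalling that this is how the underlying functor $G_0$ is defined on Hom-sets yields that $(G_0)_{D,D'}$ is a bijection, so $G_0$ is fully faithful.

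For the converse direction $(\Leftarrow)$, the key intermediate fact is the following enriched analogue of a classical criterion: a right $\moncatV$-adjoint $G$ is fully faithful if and only if $\varepsilon$ is a $\moncatV$-natural isomorphism. To establish this, I would note that by the defining adjunction isomorphism $\catD(FGD,D')\cong\catC(GD,GD')$ together with unit--counit triangle identities, the morphism $G_{D,D'}$ corresponds to precomposition $\catD(\varepsilon_D,D')\colon \catD(D,D')\to\catD(FGD,D')$. Thus $G_{D,D'}$ is an isomorphism for every $D'$ if and only if $\catD(\varepsilon_D,\mplaceholder)$ is an iso of $\moncatV$-functors $\catD\to\catV$, which by the enriched Yoneda lemma (\cref{thm:enriched_Yoneda_lemma}) happens iff $\varepsilon_D$ is an isomorphism in $\catD$. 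The analogous reasoning, carried out at the level of underlying categories, shows that $G_0$ is fully faithful iff each $\varepsilon_D$ is an isomorphism in $\catD_0$.

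To conclude, recall that by definition an isomorphism in the $\moncatV$-category $\catD$ is precisely an isomorphism in $\catD_0$. Hence the conditions ``$\varepsilon_D$ is iso in $\catD$ for every $D$'' and ``$\varepsilon_D$ is iso in $\catD_0$ for every $D$'' coincide, so the two notions of fully-faithfulness are equivalent. The main technical obstacle is the intermediate counit criterion, which amounts to an enriched Yoneda argument; beyond that, everything follows from comparing the enriched and underlying notions of isomorphism.
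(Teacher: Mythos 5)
Your argument is correct and is essentially the standard one from the reference the paper cites for this statement (Kelly, \S1.11): reduce enriched fully-faithfulness of the right adjoint to invertibility of the counit via the identification of $G_{D,D'}$ with $\catD(\varepsilon_D,D')$ and the (enriched) Yoneda lemma, do the same at the level of underlying categories, and use that isomorphisms in $\catD$ are by definition isomorphisms in $\catD_0$. The paper itself gives no proof beyond that citation, and your write-up fills it in correctly with no gaps.
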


\begin{definition}\label{def:Kan_extension}
	Let $F\colon \catC\to\catM$ and $K\colon\catC\to\catD$ be $\moncatV$-functors. A $\moncatV$-functor $\Lan_K F\colon \catD\to\catM$ is called a \emph{left Kan extension of $F$ along $K$} if for any $\moncatV$-functor $S\colon \catD\to\catM$, there is a natural bijection
	\[ \Hom_{\Fun(\catD,\catM)}(\Lan_K F,S) \cong \Hom_{\Fun(\catC,\catM)}(F,SK). \]
\end{definition}

\begin{definition}\label{def:pointwise_Kan_extension}
	Let $F\colon \catC\to\catM$ and $K\colon\catC\to\catD$ be $\moncatV$-functors. A $\moncatV$-functor $T\colon\catD\to\catM$ is called a \emph{pointwise left Kan extension of $F$ along $K$} if for any $d\in\catD$ and $m \in \catM$, there is a natural isomorphism
	\[ \catM(Td,m) \cong [\catC^\op,\catV](\catD(K\mplaceholder,d),\catM(F\mplaceholder,m)). \]
\end{definition}

Note that in Kelly's book~\cite{Kelly:1982Basic}, Kan extensions in the sense of \cref{def:Kan_extension} are referred to as weak Kan extensions and pointwise Kan extensions in the sense of \cref{def:pointwise_Kan_extension} as strong Kan extensions.

\begin{theorem}[{\cite[Thm.\ 4.43]{Kelly:1982Basic}}]
	Pointwise left Kan extensions are left Kan extensions.
\end{theorem}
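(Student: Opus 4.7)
The plan is to verify the universal property of \cref{def:Kan_extension} directly by manipulating the enriched hom $[\catD,\catM](T,S)$ for an arbitrary $\moncatV$-functor $S\colon \catD\to\catM$, rather than by first exhibiting a unit natural transformation $F \to TK$. The starting point is the end expression
\[ [\catD,\catM](T,S) \cong \int_{d\in \catD} \catM(Td,Sd), \]
combined with the pointwise hypothesis on $T$ which, upon reading the presheaf-hom as an end, rewrites $\catM(Td,m)$ as $\int_{c\in \catC}[\catD(Kc,d),\catM(Fc,m)]$.

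Substituting this formula with $m=Sd$ and then invoking the Fubini theorem for ends yields
\[ [\catD,\catM](T,S) \cong \int_{c\in\catC}\int_{d\in\catD}[\catD(Kc,d),\catM(Fc,Sd)]. \]
For each fixed $c$, the inner end is the enriched natural-transformation object $[\catD,\catV](\catD(Kc,\mplaceholder),\catM(Fc,S\mplaceholder))$, which by the enriched Yoneda lemma (\cref{thm:enriched_Yoneda_lemma}) applied to the $\moncatV$-functor $\catM(Fc,S\mplaceholder)\colon \catD\to\catV$ reduces to $\catM(Fc,SKc)$. Therefore
\[ [\catD,\catM](T,S) \cong \int_{c\in\catC}\catM(Fc,SKc) \cong [\catC,\catM](F,SK), \]
and applying $\basecatV(I,\mplaceholder)$ produces the desired bijection of $\moncatV$-natural transformations. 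Naturality in $S$ comes for free since each step in the chain is manifestly natural in $S$.

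The main subtlety I anticipate is that $\catD$ need not be small, so the end $\int_{d\in\catD}$ does not automatically exist as an honest object of $\basecatV$. I would deal with this by treating the Yoneda identification $[\catD,\catV](\catD(Kc,\mplaceholder),H)\cong H(Kc)$ as giving meaning to that large end, since the right-hand side always exists; the Fubini step is then justified on this formal level. A secondary check is that the presheaf-hom appearing in \cref{def:pointwise_Kan_extension} really is computed as the end $\int_{c\in\catC}[\catD(Kc,d),\catM(Fc,m)]$, which is a standard consequence of the end formula $[\catC^\op,\catV](G,H)\cong \int_{c\in\catC}[Gc,Hc]$ already used in the excerpt. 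Once these formal matters are in place, the entire argument reduces to a clean chain of enriched Yoneda and Fubini manipulations.
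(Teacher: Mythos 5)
Your argument is correct and is essentially the standard proof — the paper itself offers no proof but defers to Kelly's Theorem 4.43, whose argument is exactly this chain: expand $[\catD,\catM](T,S)$ as an end, substitute the pointwise formula, interchange ends, and collapse the inner end by the enriched Yoneda lemma. Your handling of the size issue (reading the large end $\int_{d\in\catD}$ via its representable/Yoneda description, or equivalently working with $\moncatV$-natural families rather than an honest end object) is also the standard resolution, so there is nothing to add.
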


\begin{proposition}[{\cite[(4.25)]{Kelly:1982Basic}}]\label{prop:exist_of_pointwise_Kan_extension}
	Let $F\colon \catC\to\catM$ and $K\colon\catC\to\catD$ be $\moncatV$-functors. If $\catM$ is cocomplete, then the pointwise left Kan extension $\Lan_K F$ exists and for any $d\in \catD$ it holds that
	\[ \Lan_K F(d)= \int^{c\in \catC} \catD(Kc,d)\tensor Fc. \]
\end{proposition}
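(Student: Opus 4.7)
The plan is to exhibit the candidate $\moncatV$-functor $T\colon\catD\to\catM$ via the asserted coend formula, and then verify directly that it satisfies the defining isomorphism of a pointwise left Kan extension from \cref{def:pointwise_Kan_extension}.

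First I would set $T(d)\coloneqq \int^{c\in\catC}\catD(Kc,d)\tensor Fc$ for each $d\in\catD$ and justify existence: since $\catM$ is cocomplete, \cref{thm:complete=cotensored_and_small_conical_limit} tells us $\catM$ is tensored and admits all small conical colimits, so the dual of \cref{prop:end_is_conical_limit_of_cotensor} yields the existence of this coend (assuming, as is standard and consistent with the rest of the paper's use of presheaf categories on small domains, that $\catC$ is small; otherwise one reads the proposition under the implicit hypothesis that the relevant colimits exist). Next I would promote $T$ to a genuine $\moncatV$-functor in $d$. The cleanest way is to invoke the functoriality of weighted colimits in the weight: the assignment $d\mapsto \catD(K\mplaceholder,d)$ defines a $\moncatV$-functor $\catD\to[\catC^\op,\catV]$, and weighting the fixed diagram $F\colon\catC\to\catM$ by these weights gives a $\moncatV$-functorial coend in $d$.

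Having defined $T$, the heart of the proof is the chain of natural isomorphisms
\begin{align*}
\catM(Td,m) &\cong \catM\!\left(\int^{c\in\catC}\catD(Kc,d)\tensor Fc,\,m\right) \\
&\cong \int_{c\in\catC}\catM\bigl(\catD(Kc,d)\tensor Fc,\,m\bigr) \\
&\cong \int_{c\in\catC}\bigl[\catD(Kc,d),\,\catM(Fc,m)\bigr] \\
&\cong [\catC^\op,\catV]\bigl(\catD(K\mplaceholder,d),\,\catM(F\mplaceholder,m)\bigr),
\end{align*}
natural in both $d\in\catD$ and $m\in\catM$. The first isomorphism is the defining universal property of the coend (a weighted colimit) together with the fact that representables $\catM(\mplaceholder,m)$ convert weighted colimits into weighted limits, i.e.\ into ends. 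The second isomorphism is the defining property of the tensor product $\catD(Kc,d)\tensor Fc$ in $\catM$. The third is the recognition of an end as the Hom object in the functor $\moncatV$-category $[\catC^\op,\catV]$, as recorded in \cref{subsec:enriched_categories}. This exhibits $T$ as a pointwise left Kan extension of $F$ along $K$, so that $\Lan_K F$ exists and agrees with the coend formula; the theorem immediately preceding the statement then upgrades this to a Kan extension in the sense of \cref{def:Kan_extension}.

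The only step that requires genuine care, rather than unrolling universal properties, is verifying the $\moncatV$-functoriality of $d\mapsto T(d)$ and the naturality of the above chain in $d$. This is the usual subtlety in passing from a pointwise construction to an enriched functor, and it is handled by observing that the whole chain of isomorphisms is induced by a single $\moncatV$-natural transformation coming from the parameter theorem for weighted colimits; equivalently one may invoke the enriched Yoneda lemma (\cref{thm:enriched_Yoneda_lemma}) applied to the functor $m\mapsto [\catC^\op,\catV](\catD(K\mplaceholder,d),\catM(F\mplaceholder,m))$ to produce $Td$ representably and then identify it with the coend. Everything else is formal manipulation with ends, coends, and the tensor–cotensor adjunction.
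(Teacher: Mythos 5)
The paper offers no proof of this proposition, deferring entirely to Kelly's (4.25), and your argument is precisely the standard one found there: define $T(d)$ by the coend, whose existence follows from $\catM$ being tensored with small conical colimits, and then verify the defining isomorphism of \cref{def:pointwise_Kan_extension} by the chain coend $\rightsquigarrow$ end $\rightsquigarrow$ tensor adjunction $\rightsquigarrow$ hom object of $[\catC^\op,\catV]$. Your caveat about the implicit smallness of $\catC$ (or, failing that, the existence of the relevant coends) is the correct reading of the hypothesis, and your treatment of $\moncatV$-functoriality in $d$ via the weight parameter (or representably via the enriched Yoneda lemma) is sound, so the proposal is correct and matches the cited proof.
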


\begin{proposition}\label{prop:Lan_of_yoneda}
	Let $\catC$ be a small $\moncatV$-category and $F\colon \catC \to\catD$ a $\moncatV$-functor. Then the left Kan extension $\Lan_F \yoneda\colon \catD\to[\catC^\op,\catV]$ of $F$ along the Yoneda embedding $\yoneda\colon \catC\to[\catC^\op,\catV]$ satisfies $\Lan_F \yoneda(d)\cong\catD(F\mplaceholder,d)$
	for any $d\in \catD$.
\end{proposition}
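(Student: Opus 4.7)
The plan is to compute $\Lan_F\yoneda(d)$ using the pointwise coend formula for left Kan extensions and then simplify via the enriched co-Yoneda lemma. First I would invoke \cref{prop:exist_of_pointwise_Kan_extension}: since $\catC$ is small, the presheaf $\moncatV$-category $[\catC^\op,\catV]$ is cocomplete, so the pointwise left Kan extension $\Lan_F\yoneda\colon \catD\to[\catC^\op,\catV]$ exists and is given at any $d\in\catD$ by
\[
\Lan_F\yoneda(d) \;\cong\; \int^{c\in\catC} \catD(Fc,d)\tensor \yoneda(c).
\]

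Next, colimits (and hence coends) in a presheaf $\moncatV$-category are computed pointwise, so evaluating the above at an arbitrary $c'\in\catC$ yields
\[
\bigl(\Lan_F\yoneda(d)\bigr)(c') \;\cong\; \int^{c\in\catC} \catD(Fc,d)\tensor \catC(c',c).
\]
I would then apply the co-Yoneda lemma (\cref{prop:coYoneda_lemma}) to the $\moncatV$-functor $H\coloneqq\catD(F\mplaceholder,d)\colon \catC^\op\to\catV$ at the object $c'$; this reduces the above coend to $\catD(Fc',d)$. Combining the two displayed isomorphisms and observing naturality in $c'$ (which is built into the coend and into co-Yoneda) gives $\Lan_F\yoneda(d)\cong\catD(F\mplaceholder,d)$ as objects of $[\catC^\op,\catV]$, which is exactly the claim.

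I do not foresee any serious obstacle. The argument is a direct combination of two tools already assembled in \cref{subsec:enriched_categories}: the pointwise coend formula and the enriched co-Yoneda lemma. The only routine bookkeeping is to verify that the coend in the presheaf $\moncatV$-category is computed objectwise (a standard fact, since weighted colimits into $[\catC^\op,\catV]$ are detected pointwise by evaluation at each object of $\catC^\op$) and to track naturality of the resulting isomorphism in $c'$. No delicate hypotheses on $\catD$ are needed beyond what is in the statement, because all of the weighted colimits involved take place in $[\catC^\op,\catV]$ rather than in $\catD$.
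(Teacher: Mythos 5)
Your proof is correct, but it takes a different route from the paper's. You compute $\Lan_F\yoneda(d)$ directly via the coend formula of \cref{prop:exist_of_pointwise_Kan_extension}, evaluate pointwise at $c'\in\catC$, and collapse the resulting coend $\int^{c\in\catC}\catD(Fc,d)\tensor\catC(c',c)$ to $\catD(Fc',d)$ by the co-Yoneda lemma (\cref{prop:coYoneda_lemma}). The paper instead never touches the coend: it uses the defining universal property of the pointwise left Kan extension (\cref{def:pointwise_Kan_extension}) to write $[\catC^\op,\catV](\Lan_F\yoneda(d),P)\cong[\catC^\op,\catV](\catD(F\mplaceholder,d),[\catC^\op,\catV](\yoneda\mplaceholder,P))$, simplifies the inner hom with the enriched Yoneda lemma (\cref{thm:enriched_Yoneda_lemma}), and concludes by representability. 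The two arguments are essentially dual faces of the same fact; your version buys an explicit objectwise description of the presheaf at the cost of having to justify that coends in $[\catC^\op,\catV]$ are computed pointwise (true, and standard --- Kelly \S 3.3 --- but not recorded among the paper's preliminaries), while the paper's version stays entirely at the level of hom-objects and needs only lemmas already stated. Both are complete proofs; your naturality bookkeeping in $c'$ and $d$ is indeed automatic from the coend and co-Yoneda isomorphisms.
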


\begin{proof}
	Since the presheaf $\moncatV$-category $[\catC^\op,\catV]$ is cocomplete, the pointwise left Kan extension $\Lan_F \yoneda$ exists. Then using the enriched Yoneda lemma (\cref{thm:enriched_Yoneda_lemma}), we have natural isomorphisms
	\begin{alignat*}{2}
	[\catC^\op,\catV](\Lan_F \yoneda(d),P) &\cong [\catC^\op,\catV](\catD(F\mplaceholder,d),[\catC^\op,\catV](\yoneda\mplaceholder,P))  \\
	&\cong [\catC^\op,\catV](\catD(F\mplaceholder,d),P) 
	\end{alignat*}
	for $P\in [\catC^\op,\catV]$. Thus the Yoneda lemma implies $\Lan_F \yoneda(d)\cong\catD(F\mplaceholder,d)$.
\end{proof}

\begin{theorem}\label{thm:ubiquitus_adjunction}
	Let $\catC$ be a small $\moncatV$-category and $F\colon \catC \to\catD$ a $\moncatV$-functor.  If the pointwise left Kan extension $\Lan_\yoneda F$ exists, there is a $\moncatV$-adjunction $\Lan_\yoneda F \dashv \Lan_F \yoneda$.
	\[
	\begin{tikzpicture}[auto]
	\node (hatC) at (0,0) {$[\catC^\op,\catV]$};
	\node (C) at (0,-2) {$\catC$}; \node (D) at (2.8,-2) {$\>\catD.$};
	
	\draw[->] (C) to node {$\scriptstyle \yoneda$} (hatC);
	\draw[->] (C) to node[swap] {$\scriptstyle F$} (D);
	\draw[->] (0.9,0) to node {$\scriptstyle \Lan_\yoneda F$}  (2.8,-1.4);
	\draw[->] (2.5,-1.7) to node {$\scriptstyle \Lan_F \yoneda$} node[sloped,pos=0.45] {$\perp$} (0.6,-0.3);
	\end{tikzpicture}
	\]
	In particular, this is the case if $\catD$ is cocomplete, because of \cref{prop:exist_of_pointwise_Kan_extension}.
\end{theorem}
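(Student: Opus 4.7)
The plan is to establish the adjunction by producing a natural isomorphism
\[ \catD(\Lan_\yoneda F(P), d) \cong [\catC^\op,\catV](P, \Lan_F \yoneda(d)) \]
in $P \in [\catC^\op,\catV]$ and $d \in \catD$, and then invoke the definition of a $\moncatV$-adjunction directly. Since both sides will be obtained from the universal properties of the Kan extensions and the Yoneda lemma, there is no need to construct unit and counit by hand.

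First I would unfold the left-hand side using \cref{def:pointwise_Kan_extension} applied to $K=\yoneda\colon\catC\to[\catC^\op,\catV]$. This yields
\[ \catD(\Lan_\yoneda F(P), d) \cong [\catC^\op,\catV]\bigl([\catC^\op,\catV](\yoneda\mplaceholder,P),\,\catD(F\mplaceholder,d)\bigr). \]
Next I would apply the enriched Yoneda lemma (\cref{thm:enriched_Yoneda_lemma}) to collapse $[\catC^\op,\catV](\yoneda\mplaceholder,P)\cong P$ inside the first argument, so the right-hand side becomes
\[ [\catC^\op,\catV]\bigl(P,\,\catD(F\mplaceholder,d)\bigr). \]
Finally, \cref{prop:Lan_of_yoneda} identifies $\catD(F\mplaceholder,d)\cong \Lan_F\yoneda(d)$, and substituting this gives the desired natural isomorphism.

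Naturality in $P$ and $d$ is automatic, since each of the three steps above is itself a natural isomorphism: the first step is natural by the pointwise universal property, the Yoneda step is natural by \cref{thm:enriched_Yoneda_lemma}, and the last step is natural by the very construction of $\Lan_F\yoneda$ in \cref{prop:Lan_of_yoneda}. The final clause of the statement, concerning cocomplete $\catD$, then follows immediately by invoking \cref{prop:exist_of_pointwise_Kan_extension} to guarantee the existence of the pointwise extension $\Lan_\yoneda F$.

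I do not foresee a substantial obstacle: the proof is essentially a formal chain of rewrites, and the only point requiring a little care is keeping track of which variable lives in which $\moncatV$-category when applying \cref{def:pointwise_Kan_extension} with $\catM=\catD$ and $K=\yoneda$. The argument also works symmetrically if one unfolds the right-hand side first via \cref{prop:Lan_of_yoneda} and then applies the Yoneda lemma together with the cocontinuity of the $\moncatV$-valued hom, which could serve as a consistency check.
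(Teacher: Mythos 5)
Your proposal is correct and is essentially identical to the paper's proof: both unfold $\catD(\Lan_\yoneda F(P),d)$ via the pointwise Kan extension property at $K=\yoneda$, collapse $[\catC^\op,\catV](\yoneda\mplaceholder,P)\cong P$ by the enriched Yoneda lemma, and then identify $\catD(F\mplaceholder,d)\cong\Lan_F\yoneda(d)$ via \cref{prop:Lan_of_yoneda}. No gaps.
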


\begin{proof}
	By the Yoneda lemma (\cref{thm:enriched_Yoneda_lemma}) and \cref{prop:Lan_of_yoneda}, for any $d\in \catD$ and $P\in [\catC^\op,\catV]$ we have natural isomorphisms
	\begin{alignat*}{2}
	\catD(\Lan_\yoneda F(P),d) &\cong [\catC^\op,\catV]([\catC^\op,\catV](\yoneda\mplaceholder,P),\catD(F\mplaceholder,d)) \\
	&\cong [\catC^\op,\catV](P,\catD(F\mplaceholder,d)) \\
	&\cong [\catC^\op,\catV](P,\Lan_F \yoneda(d)). 
	\end{alignat*}
	Hence $\Lan_\yoneda F$ is a left adjoint to $\Lan_F \yoneda$.
\end{proof}

In this paper, we will calll the $\moncatV$-adjunction of \cref{thm:ubiquitus_adjunction} the \emph{nerve-and-realization adjunction} associated with $F$, inspired by \cite[``\href{https://ncatlab.org/nlab/show/nerve+and+realization}{nerve and realization}'']{nLab}, though it is somewhat lengthy.

\begin{proposition}[{\cite[Prop.\ 4.23]{Kelly:1982Basic}}]\label{prop:Kan_extension_along_full_faithful}
	Let $F\colon \catC\to\catM$ and $K\colon\catC\to\catD$ be $\moncatV$-functors. Suppose that the pointwise left Kan extension $\Lan_K F$ of $F$ along $K$ exists. If $K$ is fully faithful, then we have $\Lan_K F \circ K \cong F$.
\end{proposition}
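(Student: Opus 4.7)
The plan is to apply the defining property of the pointwise left Kan extension (\cref{def:pointwise_Kan_extension}) at objects in the image of $K$ and to exploit full-faithfulness to reduce the resulting weight to a representable, so that the enriched Yoneda lemma (\cref{thm:enriched_Yoneda_lemma}) identifies $\Lan_K F(Kc)$ with $Fc$.

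Concretely, for any $c\in\catC$ and $m\in\catM$, the hypothesis that $\Lan_K F$ is pointwise gives a $\moncatV$-natural isomorphism
\[ \catM(\Lan_K F(Kc), m) \cong [\catC^\op,\catV]\bigl(\catD(K\mplaceholder,Kc),\catM(F\mplaceholder,m)\bigr). \]
Since $K$ is fully faithful, the components $K_{-,c}\colon \catC(\mplaceholder,c)\to\catD(K\mplaceholder,Kc)$ assemble into a $\moncatV$-natural isomorphism of $\moncatV$-functors $\catC^\op\to\catV$. Substituting this isomorphism into the weight and applying the enriched Yoneda lemma in the form
\[ [\catC^\op,\catV]\bigl(\catC(\mplaceholder,c),\catM(F\mplaceholder,m)\bigr) \cong \catM(Fc,m), \]
we obtain a chain of $\moncatV$-natural isomorphisms
\[ \catM(\Lan_K F(Kc), m) \cong \catM(Fc,m), \]
natural in $m\in\catM$. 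The (ordinary) Yoneda lemma applied to the underlying category $\catM_0$ then produces an isomorphism $\Lan_K F(Kc)\cong Fc$ in $\catM_0$.

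Finally, one must upgrade this pointwise isomorphism to a $\moncatV$-natural isomorphism $\Lan_K F\circ K \cong F$ of $\moncatV$-functors $\catC\to\catM$. This is the only nontrivial bookkeeping: each step in the chain above is $\moncatV$-natural in $c$ (the first by the $\moncatV$-naturality built into the defining isomorphism of $\Lan_K F$, the second because $K$ is a $\moncatV$-functor, and the third by the enriched Yoneda lemma), so the composite isomorphism $\catM(\Lan_K F(Kc),\mplaceholder)\cong\catM(Fc,\mplaceholder)$ is $\moncatV$-natural in $c$ as an isomorphism in $[\catM,\catV]$. The $\moncatV$-Yoneda lemma then promotes the componentwise isomorphisms $\Lan_K F(Kc)\cong Fc$ to a $\moncatV$-natural isomorphism, yielding $\Lan_K F\circ K \cong F$ as required. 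The main obstacle is precisely this last step of tracking $\moncatV$-naturality through the identification; once one trusts that each ingredient is genuinely $\moncatV$-natural (and not merely natural at the level of underlying categories), the statement follows formally from Yoneda.
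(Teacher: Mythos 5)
Your proof is correct and is essentially the standard argument: the paper gives no proof of its own but defers to Kelly's Prop.~4.23, whose proof is exactly this reduction of the weight $\catD(K\mplaceholder,Kc)$ to the representable $\catC(\mplaceholder,c)$ via full-faithfulness, followed by the enriched Yoneda lemma and a routine check of $\moncatV$-naturality in $c$.
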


\begin{theorem}[{\cite[Thm.\ 4.51]{Kelly:1982Basic}}]\label{thm:adjunction_from_functor_cat}
	Let $\catC$ be a small $\moncatV$-category and $\catD$ be a cocomplete $\moncatV$-category. Then there is a one-to-one correspondance between the following sets:
	\begin{enumerate}
		\item the set of isomorphic classes of $\moncatV$-functors $F\colon\catC\to\catD$,
		\item the set of isomorphic classes of cocontinuous $\moncatV$-functors $S\colon [\catC^\op,\catV]\to\catD$,
		\item the set of isomorphic classes of $\moncatV$-adjunctions $S\dashv T\colon [\catC^\op,\catV]\to\catD$.
	\end{enumerate}
\end{theorem}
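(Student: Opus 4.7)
The plan is to construct three assignments
\[ \alpha\colon \mathrm{(i)}\to \mathrm{(iii)},\quad \beta\colon \mathrm{(iii)}\to \mathrm{(ii)},\quad \gamma\colon \mathrm{(ii)}\to \mathrm{(i)}, \]
descending to isomorphism classes, and to check that their composites are identities. For $\alpha$, I would send a $\moncatV$-functor $F\colon \catC\to\catD$ to the nerve-and-realization $\moncatV$-adjunction $\Lan_\yoneda F\dashv \Lan_F \yoneda$ supplied by \cref{thm:ubiquitus_adjunction}, which applies since $\catD$ is cocomplete. For $\beta$, I would send a $\moncatV$-adjunction $(S,T)$ to its left adjoint $S$; this $S$ is cocontinuous because a left adjoint in a $\moncatV$-adjunction preserves all weighted colimits that exist. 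For $\gamma$, I would send a cocontinuous $\moncatV$-functor $S$ to the composite $S\circ \yoneda$. Each assignment respects isomorphism by functoriality of left and right Kan extensions and of precomposition with a fixed $\moncatV$-functor, so each descends to isomorphism classes.

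The composite $\gamma\beta\alpha$ sends $F$ to $\Lan_\yoneda F\circ \yoneda$. Since the Yoneda embedding is fully faithful, \cref{prop:Kan_extension_along_full_faithful} yields $\Lan_\yoneda F\circ\yoneda\cong F$, so $\gamma\beta\alpha$ is the identity on (i).

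The main obstacle, and the one non-formal step, is verifying that $\beta\alpha\gamma$ is the identity on (ii): for every cocontinuous $S\colon [\catC^\op,\catV]\to\catD$ one must show $\Lan_\yoneda(S\circ\yoneda)\cong S$. I would obtain this from the density of the Yoneda embedding. Applying \cref{prop:coYoneda_lemma} inside $[\catC^\op,\catV]$ to the $\moncatV$-functor $\yoneda\colon \catC\to[\catC^\op,\catV]$ and identifying $[\catC^\op,\catV](\yoneda c,P)\cong P(c)$ via the enriched Yoneda lemma (\cref{thm:enriched_Yoneda_lemma}), every presheaf $P$ is canonically the coend
\[ P\cong \int^{c\in\catC} P(c)\tensor \yoneda c. \]
Cocontinuity of $S$ then allows $S$ to be pulled inside this coend together with the tensors, giving
\[ S(P)\cong \int^{c\in\catC} P(c)\tensor S(\yoneda c), \]
which is precisely the coend formula for $\Lan_\yoneda(S\circ\yoneda)(P)$ provided by \cref{prop:exist_of_pointwise_Kan_extension}. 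This yields the desired natural isomorphism $\Lan_\yoneda(S\circ\yoneda)\cong S$.

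With this density step in hand, the remaining composite $\alpha\gamma\beta$ sends $(S,T)$ to $(\Lan_\yoneda(S\circ\yoneda),\Lan_{S\circ\yoneda}\yoneda)$. Its left-adjoint component is isomorphic to $S$ by the previous paragraph; since a left adjoint determines its right adjoint uniquely up to isomorphism, the right-adjoint component is then isomorphic to $T$. Hence $\alpha\gamma\beta$ is the identity on (iii), and the three bijections follow.
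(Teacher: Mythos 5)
Your proposal is correct and follows essentially the same route as the paper, which defines the same correspondences ($F\mapsto \Lan_\yoneda F$, $S\mapsto S\circ\yoneda$, $S\mapsto(S\dashv\Lan_{S\yoneda}\yoneda)$, $(S\dashv T)\mapsto S$) and explicitly leaves the verifications to the reader. You have supplied exactly those omitted details, correctly identifying the density of the Yoneda embedding (via the coYoneda/coend presentation of a presheaf) as the one substantive step in showing $\Lan_\yoneda(S\circ\yoneda)\cong S$ for cocontinuous $S$.
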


\begin{proof}
	The correspondance between the first set and the second is given by $F\mapsto \Lan_\yoneda F$ and $S\mapsto S\circ\yoneda$. The correspondance between the second set and the third is given by $S\mapsto (S\dashv \Lan_{S\yoneda} \yoneda)$ and $(S\dashv T)\mapsto S$. Details are left to the reader.
\end{proof}

When we have two monoidal categories and a functor between them that preserves monoidal structures in a sense, we can change enrichments of enriched categories.

\begin{definition}
	Let $\moncatV=(\basecatV,\otimes,I)$ and $\moncatW=(\basecatW,\otimes,I')$ be monoidal categories.
	A \emph{lax monoidal functor} bwtween them consists of 
	\begin{itemize}
		\item a functor $F\colon \moncatV\to\moncatW$;
		\item for every pair $X,Y\in \moncatV$ of objects, a natural morphism $\tau_{XY}\colon F(X)\otimes F(Y)\to F(X\otimes Y)$ of $\moncatW$;
		\item a morphism $\sigma\colon I'\to F(I)$ of $\moncatW$
	\end{itemize}
	so that these morphisms are compatible with the monoidal structures (see \cite[Def.\ 6.4.1]{Borceux:1994HoCA2}). If $\tau_{XY}$ and $\sigma$ all are isomorphisms, then $F$ is said to be \emph{strongly monoidal}.
\end{definition}

Note that the representable functor $\Hom_\basecatV(I,\mplaceholder)\colon \moncatV\to\Set$ is lax monoidal.

For an adjunction $F\dashv G\colon \moncatV\to\moncatW$ between monoidal categories, the right adjoint $G$ is lax monoidal if the left adjoint $F$ is strongly monoidal (see \cite[``\href{https://ncatlab.org/nlab/show/monoidal+adjunction}{monoidal adjunction}'']{nLab}). We refer to an adjunction whose left adjoint is strongly monoidal as a \emph{monoidal adjunction}.

\begin{proposition}[{Change of base \cite[Prop.\ 6.4.3]{Borceux:1994HoCA2}}]\label{prop:change_of_base_2functor}
	Let $F\colon \moncatV\to\moncatW$ be a lax monoidal functor between monoidal categories. Then $F$ induces a functor $F\colon \VCat\to\WCat$ between categories of enriched categories, which sends a $\moncatV$-category $\catC$ to the $\moncatW$-category $F(\catC)$ such that
	\begin{itemize}
		\item its objects are the same as those of $\catC$;
		\item its Hom objects are $F(\catC(C,D))\in \moncatW$;
		\item its composition maps are 
		\[ F(\catC(B,C))\otimes F(\catC(A,B)) \xrightarrow{\tau} F(\catC(B,C)\otimes\catC(A,B)) \xrightarrow{F(m)} F(\catC(A,C)); \]
		\item its identity maps are
		\[ I' \xrightarrow{\sigma} F(I) \xrightarrow{F(j_C)} F(\catC(C,C)). \]
	\end{itemize}
\end{proposition}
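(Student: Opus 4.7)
The plan is to check that the data specified for $F(\catC)$ genuinely defines a $\moncatW$-category, then extend $F$ to $\moncatV$-functors and $\moncatV$-natural transformations, and finally verify strict functoriality. Since the proposition is essentially a diagrammatic bookkeeping exercise, no deep idea is needed; the real content lies in the coherence axioms for the lax monoidal structure $(\tau,\sigma)$ on $F$.

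First I would verify that $F(\catC)$ satisfies the associativity and unit axioms. For associativity, given $A,B,C,D\in \catC$, one must show that the two composites built from the proposed composition maps $F(\catC(B,C))\otimes F(\catC(A,B))\xrightarrow{\tau} F(\catC(B,C)\otimes\catC(A,B))\xrightarrow{F(m)} F(\catC(A,C))$ agree after precomposing with the associator in $\moncatW$. The strategy is to paste together two commutative subdiagrams: the hexagon expressing the compatibility of $\tau$ with the associators of $\moncatV$ and $\moncatW$, and the image under $F$ of the associativity pentagon of $\catC$. For the unit axioms, one analogously combines the diagram expressing compatibility of $\sigma$ and $\tau$ with the left/right unitors, with the image under $F$ of the unit axioms of $\catC$. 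Once one has drawn these, commutativity follows by routine chasing.

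Next I would define the action of $F$ on morphisms. For a $\moncatV$-functor $G\colon\catC\to\catD$, define $F(G)\colon F(\catC)\to F(\catD)$ by setting $F(G)C\coloneqq GC$ on objects and $F(G)_{CD}\coloneqq F(G_{CD})\colon F(\catC(C,D))\to F(\catD(GC,GD))$ on hom objects. Functoriality of $F$ on $\moncatV$ and naturality of $\tau$ immediately yield compatibility with composition, and functoriality of $F$ together with the unit axiom $F(j_C)\circ\sigma = j_{GC}\text{-image}$ yields compatibility with identities. For a $\moncatV$-natural transformation $\alpha\colon G\Rightarrow H$, define $F(\alpha)_C\colon I'\xrightarrow{\sigma} F(I)\xrightarrow{F(\alpha_C)} F(\catD(GC,HC))$; the $\moncatW$-naturality square is obtained by applying $F$ to the $\moncatV$-naturality hexagon of $\alpha$, and then pasting in two instances of the coherence of $\tau$ with $\sigma$ expressing the unit law.

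Finally, strict functoriality on the $2$-category level is straightforward: identities are preserved because $F$ preserves identity morphisms of $\moncatV$, and composition of $\moncatV$-functors and of $\moncatV$-natural transformations is preserved because composition in both $\VCat$ and $\WCat$ is defined pointwise in terms of the composition and identities in the hom objects, which $F$ preserves. The main obstacle, as noted, is the associativity axiom for $F(\catC)$: it is the one place where both the hexagon coherence of $\tau$ and the associativity of $\catC$ itself must be combined, and keeping track of which associator acts where is the only part of the argument that is not mechanical. Once that diagram is drawn, everything else reduces to inspection.
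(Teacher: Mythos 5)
Your proof is correct and is the standard verification; the paper itself gives no proof of this proposition, simply citing Borceux (Prop.\ 6.4.3), and the argument you outline --- pasting the lax monoidal coherence hexagon and unit triangles with the $F$-image of the axioms of $\catC$ (glued by naturality of $\tau$), then defining $F(G)_{CD}=F(G_{CD})$ and $F(\alpha)_C=F(\alpha_C)\circ\sigma$ --- is exactly the one found there. No gaps worth flagging; the only step you leave implicit is the use of the naturality squares of $\tau$ in the associativity check, which is indeed routine.
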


If we take the lax monoidal functor $\Hom_\basecatV(I,\mplaceholder)\colon \moncatV\to\Set$ for $F$ in \cref{prop:change_of_base_2functor}, then we get the functor $(\mplaceholder)_0\colon \VCat\to\Cat$ taking underlying categories.

Now we consider a lax monoidal functor that is the right adjoint of a monoidal adjunction.

\begin{proposition}\label{prop:monoidal_right_ajoint_commute_with_underlying_functor}
	Let $F\dashv G\colon \moncatV\to\moncatW$ be a monoidal adjunction. Then for any $\moncatW$-category $\catD$, there is an isomorphism $(G(\catD))_0\cong \catD_0$ of ordinary categories.
\end{proposition}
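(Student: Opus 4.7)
The plan is to construct the isomorphism of ordinary categories componentwise: on objects it will be the identity (both $(G(\catD))_0$ and $\catD_0$ inherit their objects from $\catD$), and on hom sets I will exhibit a natural bijection coming from the adjunction, then verify that this bijection preserves composition and identities.

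For objects $C,D$ of $\catD$, chain the hom-set adjunction for $F\dashv G$ with the isomorphism $F(I)\cong I'$ provided by the strong monoidal structure of $F$:
\[ \Hom_{(G(\catD))_0}(C,D) = \Hom_\basecatV(I, G(\catD(C,D))) \cong \Hom_\basecatW(F(I), \catD(C,D)) \cong \Hom_\basecatW(I', \catD(C,D)) = \Hom_{\catD_0}(C,D). \]
This is already a natural bijection of sets. The substance of the proposition is thus the functoriality check.

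Composition in $(G(\catD))_0$ of $f\colon I\to G(\catD(A,B))$ and $g\colon I\to G(\catD(B,C))$ is by definition the composite
\[ I \xrightarrow{\cong} I\otimes I \xrightarrow{g\otimes f} G(\catD(B,C))\otimes G(\catD(A,B)) \xrightarrow{\tau} G(\catD(B,C)\otimes\catD(A,B)) \xrightarrow{G(m)} G(\catD(A,C)), \]
where $\tau$ is the lax monoidal structure map of $G$ and $m$ is the composition of $\catD$. Because $F\dashv G$ is a monoidal adjunction, $\tau$ is by construction the mate of the inverse $F(X)\otimes F(Y)\xrightarrow{\cong} F(X\otimes Y)$ of the strong monoidal structure of $F$. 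Taking the transpose of the above composite under $F\dashv G$ therefore collapses, via the triangle identities and the naturality of the unit and counit, to the composite
\[ I' \xrightarrow{\cong} I'\otimes I' \xrightarrow{\tilde g\otimes \tilde f} \catD(B,C)\otimes \catD(A,B) \xrightarrow{m} \catD(A,C) \]
in $\catD_0$, where $\tilde f,\tilde g$ are the transposes of $f,g$; this is precisely composition in $\catD_0$. A strictly analogous but simpler diagram chase, using $\sigma\colon I'\cong F(I)$ and the unit compatibility of the strong monoidal structure on $F$, shows that the identity of $C$ in $(G(\catD))_0$ corresponds to the identity of $C$ in $\catD_0$.

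The main obstacle is thus the bookkeeping in the composition-preservation step; however, once one unpacks the standard fact that the lax monoidal structure on $G$ in a monoidal adjunction arises as the mate of the strong monoidal structure on $F$, the verification becomes a routine diagram chase in $\moncatW$ driven by naturality of the adjunction counit. Conceptually, what is going on is that the isomorphism $\Hom_\basecatV(I,G(-))\cong \Hom_\basecatW(I',-)$ is not merely a natural bijection but an isomorphism of lax monoidal functors $\moncatW\to\Set$, so applying the 2-functoriality of change of base (\cref{prop:change_of_base_2functor}) yields the desired isomorphism of ordinary categories.
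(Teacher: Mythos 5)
Your proposal is correct and follows essentially the same route as the paper: both rest on the natural isomorphism $\Hom_\basecatV(I,G(\mplaceholder))\cong\Hom_\basecatW(F(I),\mplaceholder)\cong\Hom_\basecatW(I',\mplaceholder)$, which the paper simply asserts ``induces the isomorphism'' while you additionally carry out the (correct) verification that this bijection respects composition and identities via the mate description of the lax monoidal structure on $G$. Your closing observation --- that this is an isomorphism of lax monoidal functors and hence induces an isomorphism of the associated change-of-base functors --- is exactly the implicit content of the paper's one-line proof.
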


\begin{proof}
	Since $F$ is strongly monoidal, we have
	\[ \Hom_\basecatV(I,G\mplaceholder) \cong \Hom_\basecatW(F(I),\mplaceholder) \cong \Hom_\basecatW(I',\mplaceholder), \]
	which induces the isomorphism.
\end{proof}

\begin{proposition}\label{prop:enriched_ver_of_monoidal_adjunction}
	Let $F\dashv G\colon \moncatV\to\moncatW$ be a monoidal adjunction. Then for $X \in \moncatV$ and $Y \in \moncatW$, there is a natural isomorphism
	\[ G(\catW(F(X),Y)) \cong \catV(X,G(Y)). \]
\end{proposition}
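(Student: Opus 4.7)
The plan is to verify the isomorphism using the Yoneda lemma, by showing that both sides represent the same functor $\moncatV^{\op} \to \Set$. Concretely, I would compute $\Hom_{\basecatV}(Z, \mplaceholder)$ applied to each side for an arbitrary test object $Z \in \moncatV$, and identify the resulting sets naturally in $Z$.

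The calculation unfolds as a chain of natural isomorphisms obtained by repeatedly applying the hom-set adjunction $F \dashv G$, the tensor--internal-hom adjunctions in the closed monoidal categories $\moncatV$ and $\moncatW$, and the strong monoidality of the left adjoint $F$:
\[
\begin{aligned}
\Hom_{\basecatV}\bigl(Z, G(\catW(F(X), Y))\bigr)
&\cong \Hom_{\basecatW}\bigl(F(Z), \catW(F(X), Y)\bigr) \\
&\cong \Hom_{\basecatW}\bigl(F(Z) \otimes F(X), Y\bigr) \\
&\cong \Hom_{\basecatW}\bigl(F(Z \otimes X), Y\bigr) \\
&\cong \Hom_{\basecatV}\bigl(Z \otimes X, G(Y)\bigr) \\
&\cong \Hom_{\basecatV}\bigl(Z, \catV(X, G(Y))\bigr).
\end{aligned}
\]
Each step is natural in $Z$, so the composite is a natural isomorphism of representable functors on $\moncatV^{\op}$, and the Yoneda lemma then produces the desired isomorphism $G(\catW(F(X), Y)) \cong \catV(X, G(Y))$ in $\moncatV$.

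There is no real conceptual obstacle here; the argument is essentially a book-keeping exercise assembling standard adjunctions. The one point that genuinely uses the hypothesis of a \emph{monoidal} (as opposed to mere) adjunction is the third isomorphism, where one needs the comparison map $F(Z) \otimes F(X) \to F(Z \otimes X)$ to be invertible, which is exactly the assumption that $F$ is strong monoidal; lax monoidality of $F$ would only give a one-sided map and hence at best a canonical morphism $G(\catW(F(X), Y)) \to \catV(X, G(Y))$ rather than an isomorphism. Naturality in $X \in \moncatV$ and $Y \in \moncatW$ is automatic from the naturality of each ingredient in the chain.
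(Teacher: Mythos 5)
Your chain of isomorphisms is exactly the paper's proof: apply $\Hom_{\basecatV}(Z,\mplaceholder)$, use the adjunction $F\dashv G$, the tensor--internal-hom adjunctions, and the strong monoidality of $F$, then conclude by Yoneda. The argument is correct and your remark about where strong (versus lax) monoidality is genuinely needed is a nice extra observation.
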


\begin{proof}
	For any $Z\in \catV$, we have
	\begin{align*}
	\Hom_\basecatV(Z, G(\catW(F(X),Y)))
	&\cong \Hom_\basecatW(F(Z),\catW(F(X),Y)) \\
	&\cong \Hom_\basecatW(F(Z)\otimes F(X),Y) \\
	&\cong \Hom_\basecatW(F(Z\otimes X),Y) \\
	&\cong \Hom_\basecatV(Z\otimes X,G(Y)) \\
	&\cong \Hom_\basecatV(Z, \catV(X,G(Y))).
	\end{align*}
	Hence we obtain
	\[ G(\catW(F(X),Y)) \cong \catV(X,G(Y)) \]
	by the Yoneda lemma.
\end{proof}

\cref{prop:enriched_ver_of_monoidal_adjunction} states that a monoidal adjunction $F\dashv G$ becomes a $\moncatV$-adjunction between $\catV$ and $G(\catW)$.

\begin{proposition}\label{prop:monoidal_right_adjoint_preserve_cotensoredness}
	Let $F\dashv G\colon \moncatV\to\moncatW$ be a monoidal adjunction. If a $\moncatW$-category $\catD$ is cotensored (or tensored), then the $\moncatV$-category $G(\catD)$ is so.
\end{proposition}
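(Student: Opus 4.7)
The plan is to produce an explicit cotensor (resp.\ tensor) in $G(\catD)$ and verify its defining universal property by reducing to the cotensor (resp.\ tensor) in $\catD$ together with \cref{prop:enriched_ver_of_monoidal_adjunction}.

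For the cotensored case, given $X\in\moncatV$ and $D\in G(\catD)$, I declare
\[ X \cotensor D := F(X) \cotensor D, \]
where the right-hand side is the cotensor of $F(X)\in\moncatW$ with $D$ taken inside the cotensored $\moncatW$-category $\catD$, which exists by hypothesis. To verify the defining isomorphism of a cotensor in $G(\catD)$, I compute for any $C\in G(\catD)$:
\begin{align*}
 G(\catD)(C, X \cotensor D)
 &= G\bigl(\catD(C, F(X) \cotensor D)\bigr) \\
 &\cong G\bigl([F(X), \catD(C, D)]\bigr) \\
 &\cong [X, G(\catD(C, D))] \\
 &= [X, G(\catD)(C, D)],
\end{align*}
where the first and last equalities are the definition of the hom objects of $G(\catD)$ from \cref{prop:change_of_base_2functor}, the second isomorphism is cotensoredness of $\catD$ over $\moncatW$, and the third is \cref{prop:enriched_ver_of_monoidal_adjunction} applied with $Y=\catD(C,D)$. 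Each step is natural in $C$, so the composite gives the desired cotensor structure on $G(\catD)$.

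The tensored case is entirely dual: define $X\tensor D := F(X) \tensor D$ in $\catD$ and run the analogous chain of isomorphisms starting from $G(\catD)(X\tensor D, C)$, using tensoredness of $\catD$ and the same \cref{prop:enriched_ver_of_monoidal_adjunction} to move $F(X)$ across the change of base. I do not expect any genuine obstacle here; the statement is essentially a formal consequence of the enriched hom-adjunction between $F$ and $G$ already established in \cref{prop:enriched_ver_of_monoidal_adjunction}. The only point requiring any care is that the displayed isomorphism is natural in $C$ as a $\moncatV$-natural statement, but this is automatic because the change-of-base functor $G\colon \WCat \to \VCat$ sends $\moncatW$-natural transformations to $\moncatV$-natural ones and every step above is natural by construction.
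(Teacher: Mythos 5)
Your proposal is correct and follows essentially the same route as the paper: both define $X\cotensor D$ (resp.\ $X\tensor D$) as $F(X)\cotensor_{\moncatW} D$ (resp.\ $F(X)\tensor_{\moncatW} D$) and verify the universal property by combining the definition of the hom objects of $G(\catD)$, the (co)tensoredness of $\catD$ over $\moncatW$, and \cref{prop:enriched_ver_of_monoidal_adjunction}; the only cosmetic difference is that you read the chain of isomorphisms in the opposite direction. No gaps.
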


\begin{proof}
	In case that $\catD$ is tensored, for $X\in \catV$ and $C,D\in G(\catD)$ we have
	\begin{align*}
	\catV(X, G(\catD)(C,D))
	&= \catV(X, G(\catD(C,D))) \\
	&\cong G(\catW(F(X),\catD(C,D))) \\
	&\cong G(\catD(F(X)\tensor_\moncatW C, D)) \\
	&= G(\catD)(F(X)\tensor_\moncatW C,D)
	\end{align*}
	by \cref{prop:enriched_ver_of_monoidal_adjunction}. Therefore the $\moncatV$-category $G(\catD)$ admits the tensor product as $X \tensor_\moncatV C = F(X)\tensor_\moncatW C$.
	
	In a similar way, when $\catD$ is cotensored, we have
	\begin{align*}
	\catV(X, G(\catD)(C,D))
	&= \catV(X, G(\catD(C,D))) \\
	&\cong G(\catW(F(X),\catD(C,D))) \\
	&\cong G(\catD(C, F(X)\cotensor_\moncatW D)) \\
	&= G(\catD)(C,F(X)\cotensor_\moncatW D).
	\end{align*}
	Therefore $G(\catD)$ admits the cotensor product as $X \cotensor_\moncatV C = F(X)\cotensor_\moncatW C$.
\end{proof}

\begin{corollary}\label{cor:monoidal_right_adjoint_preserve_completeness}
	Let $F\dashv G\colon \moncatV\to\moncatW$ be a monoidal adjunction. If a $\moncatW$-category $\catD$ is complete and cocomplete, then the $\moncatV$-category $G(\catD)$ is so.
\end{corollary}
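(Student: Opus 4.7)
The plan is to combine three earlier results: \cref{prop:monoidal_right_adjoint_preserve_cotensoredness}, which hands us cotensoredness and tensoredness of $G(\catD)$ for free; \cref{prop:monoidal_right_ajoint_commute_with_underlying_functor}, which identifies the underlying ordinary category of $G(\catD)$ with that of $\catD$; and \cref{cor:C_0_complete_implies_C_complete}, which upgrades ordinary (co)completeness of the underlying category to enriched (co)completeness once we already have the (co)tensor products.

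First I would unpack the hypothesis. Since $\catD$ is complete and cocomplete as a $\moncatW$-category, \cref{thm:complete=cotensored_and_small_conical_limit} tells us that $\catD$ is both cotensored and tensored over $\moncatW$, and admits all small conical limits and colimits. The conical (co)limits in particular yield ordinary (co)limits in $\catD_0$ (see the discussion after the definition of conical limits), so $\catD_0$ is complete and cocomplete as an ordinary category.

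Next I would transport these two pieces of structure across the change-of-base. By \cref{prop:monoidal_right_adjoint_preserve_cotensoredness}, the $\moncatV$-category $G(\catD)$ inherits both cotensor and tensor products, with the explicit formulae $X \cotensor_\moncatV C = F(X)\cotensor_\moncatW C$ and $X \tensor_\moncatV C = F(X)\tensor_\moncatW C$. By \cref{prop:monoidal_right_ajoint_commute_with_underlying_functor}, we have a canonical isomorphism $(G(\catD))_0 \cong \catD_0$, so the underlying ordinary category of $G(\catD)$ is complete and cocomplete.

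Finally, \cref{cor:C_0_complete_implies_C_complete} applies directly: a $\moncatV$-category that is both cotensored and tensored is complete (respectively cocomplete) as soon as its underlying ordinary category is. Combining the three observations above yields the completeness and cocompleteness of $G(\catD)$ as a $\moncatV$-category, as required. There is no real obstacle here; everything is a clean bookkeeping exercise once one notices that \cref{cor:C_0_complete_implies_C_complete} is exactly designed for this situation, and the only mild subtlety is remembering that the tensor products on $G(\catD)$ are taken using $F$ rather than $G$.
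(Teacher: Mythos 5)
Your proposal is correct and follows essentially the same route as the paper: transfer (co)completeness of $\catD_0$ to $(G(\catD))_0$ via \cref{prop:monoidal_right_ajoint_commute_with_underlying_functor}, obtain (co)tensoredness of $G(\catD)$ from \cref{prop:monoidal_right_adjoint_preserve_cotensoredness}, and conclude with \cref{cor:C_0_complete_implies_C_complete}. The only difference is that you spell out more explicitly why $\catD_0$ is complete and cocomplete, which the paper takes for granted from the earlier discussion of conical limits.
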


\begin{proof}
	If a $\moncatW$-category $\catD$ is complete and cocomplete, then the underlying category $\catD_0$, and hence $(G(\catD))_0$ by \cref{prop:monoidal_right_ajoint_commute_with_underlying_functor}, is complete and cocomplete as an ordinary category. On the other hand, \cref{prop:monoidal_right_adjoint_preserve_cotensoredness} shows that $G(\catD)$ is cotensored and tensored. Therefore it follows from \cref{cor:C_0_complete_implies_C_complete} that $G(\catD)$ is complete and cocomplete as an enriched category.
\end{proof}

\subsection{Generators and strong generators of categories}\label{subsec:generators_and_strong_generators}

Write $\Psh{\catS}=[\catS^\op,\Set]$ for the presheaf category  on a small ordinary category $\catS$.

We will refer to a nonempty set of objects of a category $\catC$ as \emph{a family of objects} $S\subseteq \obj(\catC)$. We identify a family of objects $S$ with a small full subcategory $\catS\subseteq \catC$ spanned by $S$.

Recall that a functor $F\colon \catC\to\catD$ is said to be \emph{faithful} if for any pair of morphisms $f,g\colon c\to c'$ of $\catC$, we have $f=g$ whenever $F(f)=F(g)$.

\begin{definition}
	A family of functors $\{F_i\colon \catC \to \catD\}_{i \in I}$ is said to be \emph{jointly faithful} if for any pair of morphisms $f,g\colon c\to c'$ of $\catC$, we have $f=g$ whenever $F_i(f)=F_i(g)$ for all $i \in I$.
\end{definition}

If $\{F_i\}_i$ has only one element $F$, then that $\{F_i\}_i=\{F\}$ is jointly faithful just means that $F$ is faithful.

\begin{defprop}\label{defprop:generator}
	For a family of objects $S$ of a category $\catC$, the following conditions are equivalent. We call $S$ a \emph{generating set of objects} if it satisfies one of them (hence all of them).
	\begin{enumerate}
		\item For any pair $g_1,g_2\colon c\to d$ of morphisms of $\catC$, we obtain $g_1=g_2$ if $g_1\circ f=g_2\circ f$ holds for all $s\in S$ and $f\colon s\to c$ in $\catC$.
		\item The family of functors $\{\Hom(s,\mplaceholder)\}_{s\in S}$ is jointly faithful.
		\item The left Kan extension $\Lan_F \yoneda \colon \catC\to \Psh{\catS};\, c\mapsto \res{\Hom(\mplaceholder,c)}{\catS^\op}$ is faithful, where $F\colon \catS \hookrightarrow \catC$ denotes the inclusion functor.
	\end{enumerate}
	If moreover $\catC$ admits coproducts, then the following conditions are equivalent to the above.
	\begin{enumerate}[resume]
		\item For any $c\in \catC$, the induced map $\displaystyle \gamma_c\colon \coprod_{s\in S,\, f\colon s\to c}s\to c$ is epimorphic.
		\item For any $c\in \catC$, there is a family of morphisms $\{f_i\colon s_i \to c\}_{i \in I}$ of $\catC$ such that each $s_i$ is in $S$ and the induced map $\coprod_i s_i \to c$ is epimorphic.
	\end{enumerate}
\end{defprop}

\begin{proof}
	It is easy to check.
\end{proof}

We call an object $G$ a \emph{generator} if $S=\{G\}$ is a generating set of objects.

\begin{definition}
	A functor $F\colon \catC\to\catD$ is said to be \emph{conservative} if it reflects isomorphisms, that is, a morphism $f\colon c\to c'$ in $\catC$ is an isomorphism whenever $F(f)$ is so. More generally, a family of functors $\{F_i\colon \catC \to \catD\}_{i \in I}$ is said to be \emph{jointly conservative} if a morphism $f\colon c\to c'$ in $\catC$ is an isomorphism whenever $F_i(f)$ is so for each $i \in I$.
\end{definition}

If $\{F_i\}_i$ has only one element $F$, then that $\{F_i\}_i=\{F\}$ is jointly conservative just means that $F$ is conservative.

\begin{defprop}\label{defprop:strong_generator}
	For a family of objects $S$ of a category $\catC$, the following conditions are equivalent. We call $S$ a \emph{strongly generating set of objects} if it satisfies one of them (hence all of them).
	\begin{enumerate}
		\item The family of functors $\{\Hom(s,\mplaceholder)\}_{s\in S}$ is jointly faithful and jointly conservative.
		\item The left Kan extension $\Lan_F \yoneda\colon \catC\to \Psh{\catS}\,; c\mapsto \res{\Hom(\mplaceholder,c)}{\catS^\op}$ is faithful and conservative, where $F\colon\catS \hookrightarrow \catC$ denotes the inclusion functor.
	\end{enumerate}

\end{defprop}

\begin{proof}
	Given a morphism $f$ of $\catC$, we have $\Hom(s,f)=f\circ\mplaceholder=\Lan_F(f)(s)$. Hence it holds that $\Hom(s,f)$ is an isomorphism for all $s\in S$ if and only if $\Lan_F(f)$ is so, from which the assertion follows.
\end{proof}

We call an object $G$ a \emph{strong generator} if $S=\{G\}$ is a strongly generating set of objects. Obviously, a strongly generating set of objects is a generating set of objects.

\begin{proposition}\label{cor:strong_generator_in_cat_with_equalizer}
	For a family of objects $S$ of a category $\catC$ with equalizers, the following conditions are equivalent.
	\begin{enumerate}
		\item $S$ is a strongly generating set of objects.
		\item The family of functors $\{\Hom(s,\mplaceholder)\}_{s\in S}$ is jointly conservative.
		\item The left Kan extension $\Lan_F \yoneda\colon \catC \to \Psh{\catS}$ is conservative, where $F\colon\catS \hookrightarrow \catC$ denotes the inclusion functor.
	\end{enumerate}
\end{proposition}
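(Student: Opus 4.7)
The plan is as follows. The implications (i) $\Rightarrow$ (ii) and (i) $\Rightarrow$ (iii) are immediate from \cref{defprop:strong_generator}, since joint faithfulness-and-conservativity trivially entails joint conservativity.

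For (ii) $\Leftrightarrow$ (iii), I would reproduce the remark used in the proof of \cref{defprop:strong_generator}: for any morphism $f$ of $\catC$, one has $\Lan_F\yoneda(f)(s) = \Hom(s,f)$ for every $s\in S$. Since isomorphisms in the presheaf category $\Psh{\catS}$ are detected objectwise, $\Lan_F\yoneda(f)$ is an isomorphism if and only if $\Hom(s,f)$ is an isomorphism for every $s\in S$. This directly translates joint conservativity of $\{\Hom(s,\mplaceholder)\}_{s\in S}$ into conservativity of $\Lan_F\yoneda$.

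The heart of the argument is the implication (ii) $\Rightarrow$ (i). Here I use the extra hypothesis that $\catC$ has equalizers. Suppose $f_1,f_2\colon c\to d$ satisfy $\Hom(s,f_1)=\Hom(s,f_2)$ for every $s\in S$; I would like to conclude $f_1=f_2$. Form the equalizer $e\colon E\to c$ of $f_1$ and $f_2$ in $\catC$. Since each representable $\Hom(s,\mplaceholder)$ preserves limits, $\Hom(s,e)$ is an equalizer of $\Hom(s,f_1)$ and $\Hom(s,f_2)$ in $\Set$; because these two parallel maps are equal, their equalizer is an isomorphism, so $\Hom(s,e)$ is an isomorphism for every $s\in S$. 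By joint conservativity (ii), $e$ itself is an isomorphism, which forces $f_1=f_2$. Combined with the hypothesized joint conservativity, this establishes joint faithfulness, so by \cref{defprop:strong_generator} the family $S$ is a strongly generating set of objects.

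I do not expect any substantive obstacle: the only non-formal step is the passage from joint conservativity to joint faithfulness via equalizers, and this is a clean application of the fact that $\Hom(s,\mplaceholder)$ preserves limits. If anything, the delicate point to state carefully is that the equalizer of a pair of equal morphisms in $\Set$ is (canonically isomorphic to) the identity, which is why $\Hom(s,e)$ comes out as an isomorphism rather than merely a monomorphism.
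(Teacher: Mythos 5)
Your proposal is correct and follows essentially the same route as the paper: the paper likewise derives joint faithfulness from joint conservativity by forming the equalizer of the two morphisms, using that representables preserve limits so that the equalizer of two equal maps becomes an isomorphism, and then invoking conservativity. The remaining equivalences are handled exactly as you do, via the objectwise computation $\Lan_F\yoneda(f)(s)=\Hom(s,f)$ already noted in the proof of \cref{defprop:strong_generator}.
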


\begin{proof}
	To deduce (ii) from (i), it is sufficient to prove that $F=\prod_{s\in S}\Hom(s,\mplaceholder)\colon\catC \to \Set$ is faithful if it is conservative. For morphisms $f,g\colon c\to d$ in $\catC$, suppose $F(f)=F(g)$. Since $\catC$ admits equalizers, we can take the equalizer $e$ of $f,g$:
	\[
	\begin{tikzcd}
	e \arrow{r} & c \arrow[shift left=0.7ex]{r}{f} \arrow[shift right=0.7ex]{r}[swap]{g} & d. 
	\end{tikzcd}
	\]
	Then, since $F$ preserves limits, the diagram
	\[
	\begin{tikzcd}
	F(e) \arrow{r} & F(c) \arrow[shift left=0.7ex]{r}{F(f)} \arrow[shift right=0.7ex]{r}[swap]{F(g)} & F(d)
	\end{tikzcd}
	\]
	also forms an equalizer. Thus $F(e)\to F(c)$ is an isomorphism by the assumtion that $F(f)=F(g)$. The conservativity of $F$ implies that $e\to c$ is an isomorphism. Therefore we have $f=g$, which shows that $F$ is faithful.
	
	The other implications are straightforward.
\end{proof}


For cocomplete abelian categories, we have various characterizations of a generating set of objects as follows.

\begin{proposition}\label{prop:equivalent_condition_of_generators_in_abelian_cat}
	For a family of objects $S$ of a cocomplete abelian category $\catA$, the following conditions are equivalent.
	\begin{enumerate}
		\item $S$ is a generating set of objects.
		\item $S$ is a strongly generating set of objects.
		\item $\{\Hom_\catA(s,\mplaceholder)\colon \catA\to \Ab\}_{s\in S}$ is jointly conservative.
		\item For any $A\in \catA$, there is an exact sequence
		\[ \bigoplus_{j \in J}s_j \rightarrow \bigoplus_{i \in I} s_i \rightarrow A \rightarrow 0 \]
		such that each $s_i, s_j$ is in $S$.
	\end{enumerate}
\end{proposition}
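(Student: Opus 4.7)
The plan is to run the cycle (ii) $\Rightarrow$ (iii) $\Rightarrow$ (i) $\Rightarrow$ (iv) $\Rightarrow$ (i) $\Rightarrow$ (ii), with the only substantive step being (i) $\Rightarrow$ (ii). The implications (ii) $\Rightarrow$ (i) and (ii) $\Rightarrow$ (iii) are immediate from \cref{defprop:strong_generator}. The implication (iv) $\Rightarrow$ (i) is immediate from \cref{defprop:generator}(v), since the rightmost epimorphism in the two-step exact sequence is exactly the required surjection $\bigoplus_i s_i \twoheadrightarrow A$. Conversely, (i) $\Rightarrow$ (iv) is obtained, using cocompleteness, by applying the epi of \cref{defprop:generator}(iv) first to $A$ itself to get $\varepsilon\colon\bigoplus_i s_i \twoheadrightarrow A$, then again to the kernel $K$ of $\varepsilon$ to get $\bigoplus_j s_j \twoheadrightarrow K$, and composing.

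For (iii) $\Rightarrow$ (i), I would form the canonical map $\gamma_A\colon \bigoplus_{s \in S,\, f\colon s \to A} s \to A$ and factor it as $\bigoplus s \twoheadrightarrow J \hookrightarrow A$ through its image. Every morphism $s \to A$ factors through $J$ by construction, so $\Hom_\catA(s, J) \to \Hom_\catA(s, A)$ is surjective, and injectivity is free because $J \hookrightarrow A$ is mono. Joint conservativity then upgrades the mono $J \hookrightarrow A$ to an isomorphism, making $\gamma_A$ epi; hence $S$ generates by \cref{defprop:generator}(iv).

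The main work, and the main obstacle, is (i) $\Rightarrow$ (ii). Given $f\colon A \to B$ with every $\Hom_\catA(s, f)$ an isomorphism, I need to deduce $f$ is an isomorphism, and since $\catA$ is abelian it suffices to show $\Kernel f = 0$ and $\Coker f = 0$. For the kernel, any morphism $s \to \Kernel f$ composes with $\Kernel f \hookrightarrow A$ to a morphism killed by $f$, hence zero by injectivity of $\Hom_\catA(s, f)$; thus $\Hom_\catA(s, \Kernel f) = 0$ for all $s \in S$, and applying \cref{defprop:generator}(iv) to $\Kernel f$ exhibits it as an epimorphic quotient of $0$, forcing $\Kernel f = 0$. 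For the cokernel, the delicate point is that $\Hom_\catA(s, -)$ need not be right exact, so one cannot argue directly on $\Coker f$. Instead, surjectivity of $\Hom_\catA(s, f)$ lets me lift every $h\colon s \to B$ through $f$, so the projection $\pi\colon B \twoheadrightarrow \Coker f$ satisfies $\pi \circ h = 0$ for every such $h$; applying (i) to $B$, the canonical epimorphism $\bigoplus_{s,\, h} s \twoheadrightarrow B$ is annihilated by $\pi$, which forces $\pi = 0$ and hence $\Coker f = 0$. Circumventing the one-sided exactness of $\Hom_\catA(s, -)$ by working with generating covers of the target, rather than with the cokernel directly, is the crux of the argument.
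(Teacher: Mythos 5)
Your proof is correct, and all four implications go through. The overall route differs from the paper's in two places. First, for (i)$\Rightarrow$(ii) the paper argues at the level of the single functor $F=\prod_{s\in S}\Hom_\catA(s,\mplaceholder)\colon\catA\to\Set$: faithful functors reflect monomorphisms and epimorphisms, so $F(f)$ invertible forces $f$ monic and epic, hence invertible because $\catA$ is abelian. Your version unwinds the same underlying fact (mono $+$ epi $=$ iso in an abelian category) concretely by showing $\Kernel(f)=0$ and $\Coker(f)=0$; your remark that one must pass through generating covers of the target $B$ rather than apply $\Hom_\catA(s,\mplaceholder)$ to the cokernel is exactly where one-sided exactness would otherwise bite, and the workaround is sound. (A minor imprecision: $\Kernel f$ is the codomain of a zero \emph{epimorphism} out of $\bigoplus_{s\in S}s$, not literally a quotient of $0$, but the conclusion $\Kernel f=0$ is the same.) Second, the paper connects (iii) to the other conditions via \cref{cor:strong_generator_in_cat_with_equalizer}, i.e.\ joint conservativity plus equalizers implies joint faithfulness; you instead prove (iii)$\Rightarrow$(i) directly by factoring the canonical map $\gamma_A$ through its image and using conservativity to see that the image is all of $A$. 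Both are valid; yours trades the equalizer trick for the image factorization available in any abelian category and is arguably more self-contained. The two directions involving (iv) are identical to the paper's.
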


\begin{proof}
	To see that the condition (i) is equivalent to (ii), it is sufficient to show that the functor $F=\prod_{s\in S}\Hom(s,\mplaceholder)\colon\catA \to \Set$ is conservative if it is faithful. Given a morphism $f$ of $\catA$, suppose $F(f)$ is invertible. In particular, $F(f)$ is monic and epic, and hence $f$ is so because faithful functors reflect monomorphisms and epimorphisms. Hence $f$ is invertible since $\catA$ is abelian.
	
	The equivalence of (ii) and (iii) follows from \cref{cor:strong_generator_in_cat_with_equalizer}.
	
	The implication from (iv) to (i) can be verified by \cref{defprop:generator}~(v).
	
	To deduce (iv) from (i), let us take an object $A \in \catA$. By \cref{defprop:generator}~(v), there is an epimorphism $\gamma_A\colon \bigoplus_i s_i \to A$ with $s_i\in S$. Setting $B\coloneqq \Kernel(\gamma_A)$, we have an exact sequence
	\[
	\begin{tikzcd}
	0 \arrow{r} & B \arrow{r}{f}  & \bigoplus_i s_i \arrow{r}{\gamma_A} & A \arrow{r} & 0.
	\end{tikzcd}
	\]
	By \cref{defprop:generator}~(v) again, we get an epimorphism $\gamma_B\colon \bigoplus_j s_j\to B$ with $s_j \in S$. Then the sequence
	\[
	\begin{tikzcd}
	\bigoplus_j s_j \arrow{r}{f\circ\gamma_B} & \bigoplus_i s_i \arrow{r}{\gamma_A} & A \arrow{r} & 0
	\end{tikzcd}
	\]
	is exact.
\end{proof}

\begin{remark}
	Note that there is no difference between a generator and a generating set of objects in cocomplete abelian categories.
	More precisely, in a cocomplete abelian category $\catA$, the existence of a generator is equivalent to that of a generating set of objects. Indeed, given a generating set $S=\{s_i\}_{i \in I}$ of objects, we can verify that $G \coloneqq \bigoplus_{i \in I} s_i$ is a generator.
\end{remark}

\subsection{Locally finitely presentable bases and finite limits in enriched categories}\label{subsec:flp_base_and_finite_limits}

In this subsection, we review the adaptation of the notion of finite limits from usual categories to enriched categories.
Main references are \cite{Kelly:1982Structures} and \cite{Borcuex-Quinteiro-Rosicky:1998}.

\begin{definition}
	A category $\catJ$ is said to be \emph{filtered} if it fulfolls the following.
	\begin{enumerate}
		\item $\catJ$ is not empty.
		\item For any pair $i,j\in \catJ$ of objects, there are an object $k \in \catJ$ and morphisms $i \to k$, $j \to k$.
		\item For any pair $f,g\colon i \to j$ of morphisms, there is a morphism $h\colon j \to k$ such that $h\circ f=h\circ g$.
	\end{enumerate}
	We call a functor $F\colon \catJ\to\catC$ a \emph{filtered diagram} if $\catJ$ is a small filtered category, and we refer to its colimit as a \emph{filtered colimit}.
\end{definition}

Viewing posets as categories, we observe that filtered posets are just directed posets.

\begin{definition}
	For a category $\catC$ with filtered colimits, an object $C\in \catC$ is said to be \emph{finitely presentable} if the representable functor $\Hom_\catC(C,\mplaceholder)\colon \catC\to \Set$ preserves filtered colimits.
\end{definition}

\begin{example}
	Finitely presentable objects of the category $\Mod(R)$ of modules over a commutative ring $R$ are nothing but finitely presented modules.
\end{example}

\begin{definition}[{\cite[Def.\ 1.1]{Borcuex-Quinteiro-Rosicky:1998}}]
	A symmetric monoidal closed category $\moncatV=(\basecatV,\otimes,I)$ is called a \emph{locally finitely presentable base} if it satisfies the following conditions.
	\begin{enumerate}
		\item $\basecatV$ is locally finitely presentable, which means that it is cocomplete and has a strongly generating set of finitely presentable objects.
		\item The unit object $I$ is finitely presentable.
		\item The tensor product $X\otimes Y$ of finitely presentable objects $X,Y\in \basecatV$ is finitely presentable.
	\end{enumerate}
	We write $\basecatV_\fp$ for the full subcategory of finitely presentable objects.
\end{definition}

The cartesian closed category $\Set$ of sets is an example of locally finitely presentable bases. Later in \cref{subsec:Grothendieck_cosmoi} we will prove that the monoidal categories $\Ab$ and $\Ch$ of abelian groups and of complexes of abelian groups, respectively, also serve as examples.

\begin{definition}
	Let $\moncatV$ be a locally finitely presentable base.
	\begin{enumerate}
		\item A $\moncatV$-category $\catJ$ is said to be \emph{finite} if it satisfies the following conditions.
		\begin{itemize}
			\item $\obj(\catJ)$ is a finite set.
			\item For any pair $j,k\in \catJ$ of objects, the Hom object $\catJ(j,k)\in \basecatV$ is finitely presentable.
		\end{itemize}
		
		\item A $\moncatV$-functor $W\colon \catJ\to\catV$ is said to be \emph{finite} if it satisfies the following conditions.
		\begin{itemize}
			\item $\catJ$ is a finite $\moncatV$-category.
			\item For any object $j\in \catJ$, the image $W(j)\in \basecatV$ is finitely presentable.
		\end{itemize}
		
		\item We define a \emph{finite limit} as a limit weighted by a finite $\moncatV$-functor. We say that a $\moncatV$-category $\catC$ is \emph{finitely complete} if it admits all finite limits.
		
		\item A $\moncatV$-functor $F\colon \catC \to \catD$ is said to be \emph{left exact} (or \emph{finitely continuous}) if the domain $\catC$ admits finite limits and $F$ preserves them.
	\end{enumerate}
\end{definition}

Note that the cotensor product $X\cotensor\mplaceholder$ with a finitely presentable object $X\in \basecatV_\fp$ is a finite limit.

\begin{proposition}[{\cite[Prop.\ 4.3]{Kelly:1982Structures}}]\label{prop:finite_limit_is_finconicallimit_and_finitecotensor}
	Let $\moncatV$ be a locally finitely presentable base. A $\moncatV$-category $\catC$ admits all finite limits if and only if it admits both finite conical limits on ordinary finite categories and cotensor products with finitely presentable objects $X\in \basecatV_\fp$.
\end{proposition}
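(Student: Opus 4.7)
The forward direction is routine. A finite conical limit over an ordinary finite category $\catL$ is the weighted limit $\{\widetilde{\Delta I},\widetilde{H}\}$, whose indexing $\moncatV$-category $\catL_\moncatV$ is finite (its Hom objects are finite coproducts of $I$, hence finitely presentable by condition (ii) and cocompleteness of $\basecatV_\fp$) and whose weight takes the finitely presentable value $I$ at each object. A cotensor $X\cotensor D$ with $X\in\basecatV_\fp$ is a limit weighted by $X\colon\catI\to\catV$, where $\catI$ is the unit $\moncatV$-category; this weight is trivially finite. Hence both classes of limits are instances of finite weighted limits and are preserved when $\catC$ is finitely complete.

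For the converse, let $W\colon\catJ\to\catV$ be a finite weight and $F\colon\catJ\to\catC$. The plan is to exhibit $\{W,F\}$ as a finite conical limit of cotensors of the allowed form. Start from the defining isomorphism and rewrite the right-hand side as an end:
\[
\catC(C,\{W,F\}) \cong [\catJ,\catV](W,\catC(C,F\mplaceholder)) \cong \int_{j\in\catJ} [W(j),\catC(C,Fj)].
\]
Since $W(j)\in\basecatV_\fp$, the assumed cotensors give $[W(j),\catC(C,Fj)] \cong \catC(C, W(j)\cotensor Fj)$ naturally in $C$. Now invoke the equalizer presentation of an end: with $G(j,k) := W(k)\cotensor Fj$, the end equals the equalizer of the canonical pair
\[
\prod_{j\in\catJ} G(j,j) \rightrightarrows \prod_{j,k\in\catJ} [\catJ(j,k),\,G(j,k)].
\]
The adjunction $[Y,Z\cotensor X]\cong (Y\otimes Z)\cotensor X$ in $\catV$ lets us rewrite each right-hand factor as $(\catJ(j,k)\otimes W(k))\cotensor Fj$. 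By the hypothesis that $\moncatV$ is a locally finitely presentable base, $\basecatV_\fp$ is closed under tensor products; since $\catJ(j,k)\in\basecatV_\fp$ (as $\catJ$ is a finite $\moncatV$-category) and $W(k)\in\basecatV_\fp$ (as $W$ is finite), the tensor $\catJ(j,k)\otimes W(k)$ is again finitely presentable. Thus every cotensor appearing in both sides of the equalizer is a cotensor with an object of $\basecatV_\fp$, hence exists in $\catC$ by hypothesis.

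Finally, because $\obj\catJ$ is finite, the products $\prod_j$ and $\prod_{j,k}$ are finite, so the equalizer diagram above is an ordinary finite diagram in $\catC_0$; its equalizer exists as a finite conical limit. Call the resulting object $L\in\catC$. Applying $\catC(C,\mplaceholder)$ to the equalizer sends it to the corresponding equalizer of sets, which by the identifications above is exactly $\int_j[W(j),\catC(C,Fj)]\cong\catC(C,\{W,F\})$. By the Yoneda lemma $L\cong\{W,F\}$, and $\catC$ admits all finite weighted limits.

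The main technical point to verify is that the two parallel maps of the equalizer constructed inside $\catC$ correspond, under the representable $\catC(C,\mplaceholder)$, to the canonical pair in the end formula; this is a naturality check using the unit of the cotensor adjunction and is where one must be careful that the cotensors with $W(k)$ and with $\catJ(j,k)\otimes W(k)$ interact coherently. Once that bookkeeping is in place, everything else is an immediate consequence of condition (iii) for a locally finitely presentable base, which is exactly the closure property needed to keep all the auxiliary weights inside $\basecatV_\fp$.
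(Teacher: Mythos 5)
Your argument is correct and is essentially the standard proof of this result (the paper itself only cites Kelly's Prop.~4.3 rather than proving it): realize the finite weighted limit as the end $\int_{j}Wj\cotensor Fj$, present that end as a finite equalizer of finite products of cotensors, and use closure of $\basecatV_\fp$ under $\otimes$ to see that every cotensor appearing is with a finitely presentable object, so everything exists and is carried by $\catC(C,\mplaceholder)$ to the defining equalizer of $[\catJ,\catV](W,\catC(C,F\mplaceholder))$. Two cosmetic slips worth fixing: the right-hand factors of the equalizer live in $\catC$ and should be written as cotensors $\catJ(j,k)\cotensor G(j,k)$ rather than internal homs, and ``cocompleteness of $\basecatV_\fp$'' should read ``closure of $\basecatV_\fp$ under finite colimits in $\basecatV$.''
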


\begin{proposition}\label{prop:condition_for_left_exact}
	Let $\moncatV$ be a locally finitely presentable base and $F\colon \catC \to \catD$ a $\moncatV$-functor between $\moncatV$-categories. Suppose that $\catC$ admits finite limits. Then $F$ is left exact if and only if it preserves both finite conical limits on ordinary finite categories and cotensor products with finitely presentable objects $X\in \basecatV_\fp$.
\end{proposition}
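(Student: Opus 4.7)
The plan is to prove both directions by recognizing that finite conical limits and cotensors with finitely presentable objects are themselves particular cases of finite weighted limits, and then decomposing an arbitrary finite weighted limit into such pieces. The argument mirrors the proof scheme of \cref{prop:finite_limit_is_finconicallimit_and_finitecotensor}, lifted from an existence statement to a preservation statement.

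The ``only if'' direction I would dispose of first. A finite conical limit over a finite ordinary category $\catL$ is the weighted limit with weight $\widetilde{\Delta I}\colon \catL_\moncatV \to \catV$; here $\catL_\moncatV$ is a finite $\moncatV$-category (its Hom objects being finite copowers of $I \in \basecatV_\fp$, hence in $\basecatV_\fp$), and the weight takes the finitely presentable value $I$ everywhere, so it is a finite weight. A cotensor $X \cotensor c$ with $X \in \basecatV_\fp$ is the weighted limit over the one-object free $\moncatV$-category $\mathbb{I}_\moncatV$ with weight $X$, which is plainly finite. So both kinds of limits are finite weighted limits, and thus preserved by any left-exact $F$.

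For the converse, let $W \colon \catJ \to \catV$ be a finite weight and $H\colon \catJ \to \catC$ a $\moncatV$-functor. The strategy is to express the weighted limit $\{W, H\}$ as a finite conical equalizer of finite conical products of cotensors with finitely presentable objects, each of which $F$ preserves by hypothesis. Concretely, since $\catC$ has cotensors with finitely presentable objects by \cref{prop:finite_limit_is_finconicallimit_and_finitecotensor}, I would invoke the formula of \cref{prop:limit_is_end_of_cotensor}(i) to obtain
\[
\{W, H\} \cong \int_{j \in \catJ} Wj \cotensor Hj,
\]
then express this end as the equalizer
\[
\int_j Wj \cotensor Hj \longrightarrow \prod_{j \in \catJ} (Wj \cotensor Hj) \rightrightarrows \prod_{j, k \in \catJ} \bigl[\catJ(j, k),\, Wj \cotensor Hk\bigr],
\]
and finally rewrite each factor $[\catJ(j, k), Wj \cotensor Hk]$ as the cotensor $(\catJ(j, k) \otimes Wj) \cotensor Hk$ in $\catC$. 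The finiteness bookkeeping then runs as follows: since $\catJ$ has finitely many objects, both products are finite conical limits; by axiom (iii) of a locally finitely presentable base each $\catJ(j, k) \otimes Wj$ is again finitely presentable; and the equalizer is a finite conical limit on the parallel-pair ordinary category. Hence every ingredient is of a type $F$ is assumed to preserve, and splicing these preservations together yields $F\{W, H\} \cong \{W, FH\}$.

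The main obstacle I foresee is verifying that the end formula and its equalizer presentation actually hold in $\catC$, since \cref{prop:limit_is_end_of_cotensor} is stated under the hypothesis of full cotensoredness, which $\catC$ need not satisfy. However, the formula only employs the specific cotensors $Wj \cotensor Hj$ and $(\catJ(j,k) \otimes Wj) \cotensor Hk$, all with finitely presentable first argument, together with a finite conical equalizer of finite products; every one of these exists in $\catC$ by finite completeness. So adapting the proof of \cref{prop:limit_is_end_of_cotensor} to check that only these available limits intervene should suffice, and beyond this the argument is entirely formal.
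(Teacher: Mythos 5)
Your argument is correct and is essentially the same as the paper's: the paper's proof is a one-line appeal to \cref{prop:finite_limit_is_finconicallimit_and_finitecotensor}, whose underlying content is exactly the decomposition of a finite weighted limit into a finite conical equalizer of finite products of cotensors with finitely presentable objects that you spell out. Your explicit treatment of the finiteness bookkeeping (finite copowers of $I$ for free $\moncatV$-categories, axiom (iii) for $\catJ(j,k)\otimes Wj$) and of the fact that only the available cotensors intervene is precisely what makes the citation legitimate.
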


\begin{proof}
	The claim follows from \cref{prop:finite_limit_is_finconicallimit_and_finitecotensor}.
\end{proof}

\subsection{Dualizable objects}\label{subsec:dualizable_objects}

In this subsection, we recall an important notion of finiteness for objects of monoidal categories.

\begin{definition}
	Let $\moncatV=(\basecatV,\otimes,I)$ be a monoidal category. A pair $(X,Y)$ of objects is called a \emph{dual pair} if there are morphisms
	\[ \eta\colon I \to Y \otimes X, \qquad \varepsilon\colon X\otimes Y\to I \]
	in $\basecatV$ such that the following diagrams commute:
	\[
	\begin{tikzcd}
	X \arrow{r}{\cong} \arrow{rrdd}[swap]{\id_X} & X\otimes I \arrow{r}{X\otimes\eta} & X\otimes Y\otimes X \arrow{d}{\varepsilon\otimes X}\\
	& & I\otimes X \arrow{d}{\cong} \\
	& & X,
	\end{tikzcd}
	\qquad
	\begin{tikzcd}
	Y \arrow{r}{\cong} \arrow{rrdd}[swap]{\id_Y} & I\otimes Y \arrow{r}{\eta\otimes Y} & Y\otimes X\otimes Y \arrow{d}{Y\otimes \varepsilon}\\
	& & Y\otimes I \arrow{d}{\cong} \\
	& & Y.
	\end{tikzcd}
	\]
	If $(X,Y)$ is a dual pair, then we call $X$ a \emph{left dual} to $Y$ and $Y$ a \emph{right dual} to $X$.
\end{definition}

\begin{proposition}[{\cite[III\S1, Thm.\ 1.6]{Lewis-May-Steinberger:1986}, \cite[Prop.\ 2.10.8]{Etingof-Gelaki-Nikshych-Ostrik:2015}}]
	Let $\moncatV$ be a monoidal category and $(X,Y)$ a dual pair. Then for any $Z,W\in \basecatV$, we have natural bijections
	\begin{equation*}
	\begin{split}
	\Hom_V(Z,W\otimes X) &\cong \Hom_V(Z\otimes Y,W), \\
	\Hom_V(X\otimes Z,W) &\cong \Hom_V(Z,Y\otimes W);
	\end{split}\tag{$\Diamond$}
	\end{equation*}
	These bijections lead to the adjunctions $\mplaceholder\otimes Y\dashv\mplaceholder\otimes X$ and $X\otimes\mplaceholder\dashv Y\otimes\mplaceholder$.
	Conversely, if there is either of the natural bijections in ($\Diamond$) for $X,Y\in \moncatV$, then $(X,Y)$ becomes a dual pair.
\end{proposition}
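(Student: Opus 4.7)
For the forward direction, I would assume $(X,Y)$ is a dual pair with unit $\eta \colon I \to Y \otimes X$ and counit $\varepsilon \colon X \otimes Y \to I$, and define candidate inverse maps
\begin{align*}
\Phi_{Z,W}(f) &= r_W \circ (W \otimes \varepsilon) \circ a \circ (f \otimes Y), \\
\Psi_{Z,W}(g) &= (g \otimes X) \circ a^{-1} \circ (Z \otimes \eta) \circ r_Z^{-1}.
\end{align*}
The identities $\Phi \circ \Psi = \mathrm{id}$ and $\Psi \circ \Phi = \mathrm{id}$ would then reduce, after tensoring the defining triangles of the dual pair by $W$ or $Z$, to those two triangle identities themselves; naturality in $Z$ and $W$ is immediate from functoriality of $\otimes$. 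Such a natural bijection is by definition an adjunction $\mplaceholder \otimes Y \dashv \mplaceholder \otimes X$.

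For the backward direction, I would start from a natural bijection $\phi_{Z,W} \colon \Hom_V(Z, W \otimes X) \cong \Hom_V(Z \otimes Y, W)$, which is precisely the data of an adjunction $(\mplaceholder \otimes Y) \dashv (\mplaceholder \otimes X)$; let $\eta'$, $\varepsilon'$ denote its unit and counit. I would then set $\eta := \eta'_I \colon I \to Y \otimes X$ and $\varepsilon := \varepsilon'_I \colon X \otimes Y \to I$ (using the unit isomorphisms to identify $I \otimes Y \otimes X$ with $Y \otimes X$ and $X \otimes Y$ with $I \otimes X \otimes Y$). The two triangle identities for a dual pair then follow from the triangle identities of the adjunction, evaluated at $Z = X$ and $W = Y$: naturality of $\eta'$ against the unit isomorphisms rewrites $\eta'_X$ as $(X \otimes \eta) \circ r_X^{-1}$ up to coherence, and likewise for $\varepsilon'_Y$, which is exactly what is needed to transport the adjunction triangles into the dual pair triangles.

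The second natural bijection in the statement is handled symmetrically, tensoring on the left rather than the right; the same $\eta$ and $\varepsilon$ produce the adjunction $X \otimes \mplaceholder \dashv Y \otimes \mplaceholder$. The hard part throughout is the coherence bookkeeping in the backward direction: one must carefully commute the candidate unit and counit past the associator and unit isomorphisms of $\moncatV$ in order to extract the dual pair triangle identities from those of the adjunction. Once this setup is in place, the remaining verifications are routine diagram chases.
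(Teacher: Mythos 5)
The paper does not prove this proposition; it simply cites \cite{Lewis-May-Steinberger:1986} and \cite{Etingof-Gelaki-Nikshych-Ostrik:2015}, so there is no in-text argument to compare against. Your sketch is the standard proof from those references and is correct: the explicit formulas for $\Phi$ and $\Psi$ are the right ones, their mutual inverseness reduces to the two zig-zag triangles, and the converse correctly recovers $\eta$ and $\varepsilon$ from the (co)unit of the adjunction at the monoidal unit, with only unit/associator coherence left to chase.
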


\begin{remark}\label{rem:dual_pair_in_cosmos}
	If $(X,Y)$ is a dual pair in a symmetric monoidal closed category $\moncatV$, then so is the pair $(Y,X)$, and we have isomorphisms of functors
	\[ [Y,\mplaceholder] \cong X\otimes \mplaceholder,\qquad [X,\mplaceholder]\cong Y\otimes\mplaceholder. \]
	In particular, it holds that $Y\cong [X,I]$ and $[X,I]\otimes\mplaceholder\cong [X,\mplaceholder]$.
\end{remark}

\begin{definition}
	An object $X$ of a symmetric monoidal closed category $\moncatV$ is said to be \emph{dualizable} if there is an object $X^*\in \basecatV$ such that $(X,X^*)$ is a dual pair. \cref{rem:dual_pair_in_cosmos} shows that $X^*\cong [X,I]$ if it exists. We write $\basecatV_\du$ for the full subcategory of dualizable objects.
\end{definition}

Note that if there is a natural isomorphism $[X,I]\otimes\mplaceholder\cong [X,\mplaceholder]$, then the pair $(X,[X,I])$ is a dual pair, and hence $X$ is dualizable (\cite{Lewis-May-Steinberger:1986}).

\begin{example}
	Dualizable objects of the symmetric monoidal closed category $\Mod(R)$ of modules over a commutative ring $R$ are nothing but finitely generated projective modules.
\end{example}

\begin{proposition}\label{prop:tensoring_with_dualizable_is_absolute}
	Let $\moncatV$ be a symmetric monoidal closed category. For $X\in \basecatV$, the following conditions are equivalent.
	\begin{enumerate}
		\item $X$ is dualizable.
		\item For any $\moncatV$-category $\catC$, the cotensor product $X\cotensor C$ with an object $C\in \catC$ is an \emph{absolute limit}, that is, preserved by arbitrary $\moncatV$-functors out of $\catC$.
		\item For any $\moncatV$-category $\catC$, the tensor product $X\tensor C$ with an object $C\in \catC$ is an \emph{absolute colimit}, that is, preserved by arbitrary $\moncatV$-functors out of $\catC$.
	\end{enumerate}
\end{proposition}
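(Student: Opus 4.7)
My plan is to prove the equivalence by establishing the two ``easy'' implications $(\text{ii}) \Rightarrow (\text{i})$ and $(\text{iii}) \Rightarrow (\text{i})$ by specializing to the case $\catC = \catV$, together with the two substantive forward implications $(\text{i}) \Rightarrow (\text{ii})$ and $(\text{i}) \Rightarrow (\text{iii})$, which are formally dual to one another. For $(\text{ii}) \Rightarrow (\text{i})$, recall that in $\catV$ itself the cotensor $X \cotensor Y$ coincides with the internal hom $[X,Y]$. Applying the absoluteness hypothesis to the $\moncatV$-functor $\mplaceholder\otimes Z \colon \catV \to \catV$ (which is indeed a $\moncatV$-functor, being left adjoint to $[Z,\mplaceholder]$) yields a natural isomorphism $[X,Y]\otimes Z \cong [X, Y\otimes Z]$, and setting $Y = I$ recovers the dualizability isomorphism $[X,I]\otimes Z \cong [X, Z]$. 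The direction $(\text{iii}) \Rightarrow (\text{i})$ is analogous, applied to $[Y,\mplaceholder] \colon \catV \to \catV$: preservation of the tensor $X \otimes Z$ gives $[Y, X \otimes Z] \cong X \otimes [Y, Z]$, from which the coevaluation $I \to X \otimes [X,I]$ can be extracted by tracing $\id_X$ through the isomorphism $[X, X] \cong X \otimes [X, I]$, the triangle identities then following from naturality and the canonical evaluation $X \otimes [X,I] \to I$.

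For the main direction $(\text{i}) \Rightarrow (\text{ii})$, I would assume $X$ is dualizable with $X^* = [X,I]$, unit $\eta \colon I \to X^* \otimes X$, and counit $\varepsilon \colon X \otimes X^* \to I$. Given a $\moncatV$-functor $F \colon \catC \to \catD$, dualizability rewrites the defining property of the cotensor as a natural isomorphism $\catC(D, X \cotensor C) \cong X^* \otimes \catC(D, C)$ in $\moncatV$. Letting $p \colon X \to \catC(X \cotensor C, C)$ denote the universal projection of the cotensor, its image $F(p) \colon X \to \catD(F(X \cotensor C), FC)$ under the action of $F$ on Hom objects induces, via the adjunction $\mplaceholder\otimes X \dashv \mplaceholder\otimes X^*$ supplied by dualizability, a morphism $F(X \cotensor C) \to X \cotensor FC$ in $\catD$. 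In the other direction, $\eta$ together with $F$ and the universal property of the cotensor in $\catD$ produce the canonical comparison $X \cotensor FC \to F(X \cotensor C)$. The triangle identities for the dual pair $(X, X^*)$, transported into $\catD$ by the $\moncatV$-functoriality of $F$, will witness that these two morphisms are mutually inverse, so that $F(X \cotensor C) \cong X \cotensor FC$. The implication $(\text{i}) \Rightarrow (\text{iii})$ is entirely dual, with the roles of $X$ and $X^*$, and of cotensor and tensor, interchanged.

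The main obstacle I anticipate is the directional mismatch between the universal property of the cotensor (maps \emph{into} $X \cotensor C$) and the natural pushforward action of $F$ on Hom objects; the dualizability of $X$, through the natural isomorphism $[X,\mplaceholder] \cong X^* \otimes \mplaceholder$ together with the accompanying triangle identities, is precisely what bridges this gap, and book-keeping the relevant commutative squares will be the most delicate step. A more abstract alternative would be to invoke Street's characterization of absolute weighted (co)limits via Cauchy weights, according to which the weight $X$ defining a (co)tensor is Cauchy if and only if $X$ is dualizable; however, since that machinery is not reviewed in the paper, the direct comparison-and-inverse construction outlined above is the preferable route.
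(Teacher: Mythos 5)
Your plan is sound, but it follows a genuinely different route from the paper for the substantive implication (i) $\Rightarrow$ (ii). The paper outsources the full equivalence to Street's theorem on absolute colimits and only verifies (i) $\Rightarrow$ (ii), doing so not by constructing comparison maps but by a single coend computation: using \cref{prop:coYoneda_lemma} it expands $\catD(D,S(X\cotensor C))$ as a coend over $\catC$, replaces $[X,\mplaceholder]$ by $[X,I]\otimes\mplaceholder$ (via \cref{rem:dual_pair_in_cosmos}), pulls $[X,I]$ out of the coend, and collapses back to $[X,\catD(D,SC)]\cong\catD(D,X\cotensor SC)$. That calculation buys something your two-maps-and-triangle-identities argument does not automatically give: it shows directly that $S(X\cotensor C)$ \emph{represents} the cotensor $X\cotensor SC$ in $\catD$, without presupposing that this cotensor already exists in $\catD$. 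Your construction of the backward morphism $X\cotensor FC\to F(X\cotensor C)$ assumes the object $X\cotensor FC$ exists, and moreover the phrase ``the universal property of the cotensor in $\catD$'' does not literally produce a map \emph{out of} $X\cotensor FC$; you would need to route through $\eta$ and the isomorphism $[X,\mplaceholder]\cong[X,I]\otimes\mplaceholder$ to reverse the direction. Both issues are repairable (e.g.\ by showing instead that the transformation $\catD(D,F(X\cotensor C))\to[X,\catD(D,FC)]$ induced by $F(p)$ has an inverse built from $\eta$ and $\varepsilon$), but the bookkeeping you anticipate is exactly where the work lies, and the coend argument avoids it entirely. Your treatment of (ii) $\Rightarrow$ (i) and (iii) $\Rightarrow$ (i) by specializing to $\catC=\catV$ and the functors $\mplaceholder\otimes Z$ and $[Y,\mplaceholder]$ is correct and is more explicit than the paper, which proves neither converse directly; it only needs the observation (already recorded after the definition of dualizable objects) that a natural isomorphism $[X,I]\otimes\mplaceholder\cong[X,\mplaceholder]$ forces dualizability.
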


\begin{proof}
	This assertion is a special case of \cite{Street:1983}.
	
	Here we only verify the implication (i) $\Rightarrow$ (ii). Consider a $\moncatV$-functor $S\colon \catC\to \catD$. For $D\in \catD$, we have
	\begin{alignat*}{2}
	\catD(D,S(X\cotensor C)) &\cong \int^{C'\in \catC} \catC(C',X \cotensor C) \otimes \catD(D,SC') & \qquad & \text{by \cref{prop:coYoneda_lemma},} \\
	&\cong \int^{C'\in \catC} \catV(X,\catC(C',C)) \otimes \catD(D,SC') & & \\
	&\cong \int^{C'\in \catC} [X,I] \otimes \catC(C',C) \tensor \catD(D,SC') & & \text{by \cref{rem:dual_pair_in_cosmos},} \\
	&\cong [X,I] \otimes \int^{C'\in \catC} \catC(C',C) \tensor \catD(D,SC') & & \\
	&\cong [X,I] \otimes \catD(D,SC) & & \text{by \cref{prop:coYoneda_lemma},} \\
	&\cong \catV(X,\catD(D,SC)) & & \text{by \cref{rem:dual_pair_in_cosmos},} \\
	&\cong \catD(D,X\cotensor SC). & &
	\end{alignat*}
	These isomorphisms are natural in $D$, and hence we have $S(X\cotensor C) \cong X\cotensor SC$ by the Yoneda lemma.
\end{proof}

\section{Grothendieck enriched categories}\label{sec:Grothendieck_enriched_categories}

\subsection{The Gabriel-Popescu theorem}\label{subsec:the_Gabriel-Popescu_theorem}

We first recall the definition of Grothendieck categries.

\begin{definition}
	A \emph{Grothendieck category} $\catA$ is an abelian category that possesses the following properties:
	\begin{enumerate}
		\item it admits all small coproducts;
		\item filtered colimits are exact;
		\item it has a generator $G$, which means that the functor $\catA(G,\mplaceholder)\colon \catA \to \Ab$ is faithful.
	\end{enumerate}
\end{definition}

Since abelian categories admit coequalizers, Grothendieck categories are cocomplete by the condition (i). Remember that an abelian category has the unique $\Ab$-enrichment.

\begin{example}
	The category $\Mod(R)$ of modules over a ring $R$ is a Grothendieck category.
\end{example}

\begin{example}
	The category $\Qcoh(X)$ of quasi-coherent sheaves on a scheme $X$ is a Grothendieck category (see \cite[\href{https://stacks.math.columbia.edu/tag/077K}{Tag 077K}]{stacks-project}).
\end{example}

Before stating a slightly generalized version of the Gabriel-Popescu theorem (\cite{Gabriel-Popescu:1964}), we remark that a cocomplete abelian category $\catA$ is also cocomplete as an $\Ab$-enriched category (see \cite[Prop.\ 3.76]{Kelly:1982Basic}), so that an additive functor $F\colon \catC\to \catA$ yields the nerve-and-realization adjunction $\Lan_\yoneda F \dashv \Lan_F\yoneda \colon \Mod(\catC) \to \catA$ by \cref{thm:ubiquitus_adjunction}.

\begin{theorem}[{The Gabriel-Popescu theorem~\cite{Gabriel-Popescu:1964,Kuhn:1994}}]\label{thm:gabriel_popescu}
	Let $\catA$ be a Grothendieck category that has a generating set $\catC$ of objects. We regard $\catC$ as a full additive subcategory and write $F\colon \catC\hookrightarrow\catA$ for the inclusion additive functor. Then the following assertions on the nerve-and-realization adjunction associated with $F$ hold.
	\begin{enumerate}
		\item The right adjoint $\Lan_F\yoneda$ is fully faithful.
		\item The left adjoint $\Lan_\yoneda F$ is left exact.
	\end{enumerate}
\end{theorem}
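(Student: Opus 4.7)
The plan is to follow the classical Gabriel--Popescu argument; the passage from a single generator to a generating set $\catC$ is merely notational, replacing $\Mod(R)$ with $\Mod(\catC):=[\catC^{\op},\Ab]$. First I would unwind the adjunction using \cref{prop:exist_of_pointwise_Kan_extension,prop:Lan_of_yoneda} specialized to $\moncatV=\Ab$, obtaining $T(A)=\catA(F(\mplaceholder),A)$ and $S(P)=\int^{c\in\catC}P(c)\otimes c$; in particular $ST(A)$ is the colimit of the tautological diagram indexed by pairs $(c,g\colon c\to A)$ with $c\in\catC$, and the counit $\epsilon_A\colon ST(A)\to A$ is the universal map sending each $(c,g)$-component to $g$ itself.

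Two observations are immediate. First, $T$ is faithful because $\catC$ generates: the family $\{\catA(c,\mplaceholder)\}_{c\in\catC}=\{T(\mplaceholder)(c)\}_{c\in\catC}$ is jointly faithful. Second, $\epsilon_A$ is epi for every $A\in\catA$: by \cref{prop:equivalent_condition_of_generators_in_abelian_cat}(iv) there is a presentation $\bigoplus_i c_i\twoheadrightarrow A$ with $c_i\in\catC$, and each component $c_i\to A$ factors through $\epsilon_A$ via the tautological cone.

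The remaining content of (i) and (ii) I would prove simultaneously through a torsion-theoretic argument. Let $\catT:=\{P\in\Mod(\catC):S(P)=0\}$. Being a left adjoint, $S$ is right exact and preserves coproducts, so $\catT$ is automatically closed under quotients and coproducts. If one can further show $\catT$ is closed under subobjects, then closure under extensions drops out of the snake lemma, making $\catT$ a \emph{localizing subcategory} of $\Mod(\catC)$. Gabriel's localization theory then factors $S$ as the exact Serre localization $\Mod(\catC)\twoheadrightarrow\Mod(\catC)/\catT$ followed by an induced functor $\bar S\colon\Mod(\catC)/\catT\to\catA$; this $\bar S$ is essentially surjective because every $A\in\catA$ is the cokernel of a map between coproducts of objects of $\catC$, each of which lies in $\Image(S)$ via $S\yoneda_c\cong c$, and it is fully faithful by standard Gabriel theory (its right adjoint being the restriction of $T$). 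Thus $\bar S$ is an equivalence, making $S$ exact (giving (ii)) and identifying $T$ with the fully faithful right adjoint of the localization (giving (i)).

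The principal obstacle is the closure of $\catT$ under subobjects---the one step where AB5, i.e.\ exactness of filtered colimits in $\catA$, is used essentially. The plan is: given a monomorphism $P'\hookrightarrow P$ with $S(P)=0$, write $P'$ as the filtered union of its finitely generated subpresheaves $P'_\alpha$, each sitting in an exact sequence coming from a presentation by representables; combining the right-exactness of $S$ with the exactness of filtered colimits in $\catA$ shows $S(P'_\alpha)=0$ for every $\alpha$, whence $S(P')=\colim_\alpha S(P'_\alpha)=0$. This is precisely the step that fails for cocomplete abelian categories satisfying less than AB5, which is why ``Grothendieck'' is exactly the right hypothesis.
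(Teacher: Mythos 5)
The paper itself offers no proof of this theorem --- it is quoted from the literature (Kuhn, Thm.~2.1; see also Lowen) --- so your proposal must stand on its own. Your preliminary observations are fine: the coend formula for $S$, the faithfulness of $T$ from the generating hypothesis, and the surjectivity of each counit component $\epsilon_A$. But the two assertions that carry all the weight afterwards are exactly the hard content of Gabriel--Popescu, and neither is actually argued. The main gap is the closure of $\mathcal{T}=\{P \mid S(P)=0\}$ under subobjects. Your sketch --- write $P'\hookrightarrow P$ as a filtered union of finitely generated subpresheaves $P'_\alpha$ and claim $S(P'_\alpha)=0$ ``by combining right-exactness with AB5'' --- gives no mechanism for the claim $S(P'_\alpha)=0$; that claim is itself an instance of closure under subobjects, so the step is circular. (Note also that a filtered colimit can vanish without any of its terms vanishing, since the transition maps may kill everything, so neither direction of the intended colimit argument works.) Every known proof passes through a concrete lemma of the following type: \emph{for every $A\in\catA$ and every subfunctor $P\subseteq T(A)$, the composite $S(P)\to ST(A)\xrightarrow{\ \epsilon_A\ } A$ is a monomorphism}. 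Its proof is precisely where the generator property (elements of $P(c)\subseteq\catA(c,A)$ are genuine morphisms $c\to A$, and the objects of $\catC$ jointly detect kernels) and AB5 (reduction from arbitrary to finitely generated subfunctors) interact; the lemma then yields both that $\epsilon_A$ is monic (hence $T$ fully faithful) and, after a short reduction, that $S$ preserves monomorphisms. Your proposal never engages with any statement of this kind.

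The second gap is structural: even granting that $\mathcal{T}$ is localizing, the factorization $S\cong\bar S\circ Q$ through the exact quotient $Q\colon\Mod(\catC)\to\Mod(\catC)/\mathcal{T}$ is not available for a functor only known to be right exact. The universal property of the Serre quotient applies to \emph{exact} functors killing $\mathcal{T}$; if you instead define $\bar S$ as $S$ composed with the section functor, the comparison map $S(P)\to\bar S(Q(P))$ is epic (right exactness plus $S(\mathcal{T})=0$) but there is no reason for it to be monic without the left exactness of $S$ --- which is what you are trying to prove. So the architecture ``localizing kernel $\Rightarrow$ $S$ exact'' is circular as written. (A smaller issue: your essential-surjectivity argument for $\bar S$ needs to lift a morphism $\bigoplus_j c_j\to\bigoplus_i c_i$ of $\catA$ along $S$, which requires the objects of $\catC$ to be small in $\catA$; this is not assumed.) The torsion-theoretic picture you describe is a correct \emph{a posteriori} reading of the theorem, but as a proof strategy it presupposes the key monomorphism lemma rather than replacing it.
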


This form of the Gabriel-Popescu theorem is given in \cite[Thm.\ 2.1]{Kuhn:1994}; see also \cite{Lowen:2004}.
The Gabriel-Popescu theorem is an extrinsic characterization of Grothendieck categories.

\subsection{Grothendieck cosmoi}\label{subsec:Grothendieck_cosmoi}

Sometimes a complete and cocomplete symmetric monoidal closed category $\moncatV=(\basecatV,\otimes,I)$ is called a \emph{cosmos}. If moreover $\moncatV$ is a Grothendieck category, then we will call it a \emph{Grothendieck cosmos}. For a Grothendieck cosmos $\basecatV$, let us consider a kind of finiteness conditions as follows:
\begin{enumerate}
	\item[] (C1) the unit object $I\in \basecatV$ is finitely presentable;
	\item[] (C2) $\basecatV$ has a generating set $\{g_j\}_{j \in J}$ of dualizable objects.
\end{enumerate}
Note that $\{g_j\}_{j \in J}$ is also strongly generating by \cref{prop:equivalent_condition_of_generators_in_abelian_cat}.

\begin{lemma}\label{lem:construction_of_dualizables}
	Let $\moncatV$ be a Grothendieck cosmos. If $X,Y\in \basecatV$ are dualizable, then so are $X\oplus Y$, $X\otimes Y$, and $[X,Y]$.
\end{lemma}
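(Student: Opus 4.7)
The plan is to check, in each case, the criterion recorded in \cref{rem:dual_pair_in_cosmos}: an object $Z\in \basecatV$ is dualizable if and only if there is a natural isomorphism $[Z,I]\otimes(-)\cong[Z,-]$. So for each of the three constructions I only need to exhibit such a natural isomorphism.

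For $X\otimes Y$, I would iterate the tensor--internal-hom adjunction to get $[X\otimes Y,W]\cong[X,[Y,W]]$ and then successively substitute $[Y,W]\cong Y^*\otimes W$ (by dualizability of $Y$) and $[X,Y^*\otimes W]\cong X^*\otimes Y^*\otimes W$ (by dualizability of $X$). Setting $W=I$ identifies $[X\otimes Y,I]$ with $X^*\otimes Y^*$, and feeding this back in exhibits $[X\otimes Y,I]\otimes W\cong[X\otimes Y,W]$, natural in $W$. For $[X,Y]$, I would first use dualizability of $X$ to write $[X,Y]\cong X^*\otimes Y$. Because a dual pair is symmetric in a symmetric monoidal closed category, $X^*$ is itself dualizable (with dual $X$), so $[X,Y]\cong X^*\otimes Y$ is a tensor product of two dualizable objects and dualizability follows from the case already handled.

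For $X\oplus Y$, I would use that the internal-hom $[-,W]$ carries finite coproducts in the first variable into finite products, and that in the Grothendieck (in particular abelian) category $\basecatV$ finite products agree with finite coproducts. Combined with the fact that $(-)\otimes W$ preserves finite coproducts (since it is a left adjoint, thanks to the closed structure), this gives
\[
[X\oplus Y,W]\;\cong\;[X,W]\oplus[Y,W]\;\cong\;(X^*\otimes W)\oplus(Y^*\otimes W)\;\cong\;(X^*\oplus Y^*)\otimes W,
\]
natural in $W$. Taking $W=I$ identifies $[X\oplus Y,I]\cong X^*\oplus Y^*$ and the chain of isomorphisms then verifies the criterion.

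Main obstacle: there is essentially no hard step here; the only point requiring care is that each isomorphism used (adjunction, dualizability, distributivity of $\otimes$ over $\oplus$) is natural in the test object $W$, which is precisely what is needed to invoke \cref{rem:dual_pair_in_cosmos}. This is automatic because every isomorphism used comes from unit/counit morphisms of adjunctions, which are natural by construction.
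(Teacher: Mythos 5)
Your argument is correct. Note, though, that the paper does not actually prove this lemma at all: its ``proof'' consists of the single line ``See \cite[Lem.\ 6.7]{Holm-Odabasi:2019}'', so there is nothing in the text to compare against step by step. What you supply is a self-contained verification using exactly the criterion the paper records in \cref{rem:dual_pair_in_cosmos} (an object $Z$ is dualizable as soon as there is a natural isomorphism $[Z,I]\otimes(-)\cong[Z,-]$), and each of your three reductions checks out: the chain $[X\otimes Y,W]\cong[X,[Y,W]]\cong X^*\otimes Y^*\otimes W$ for the tensor product; the identification $[X,Y]\cong X^*\otimes Y$ together with the symmetry of dual pairs (also part of \cref{rem:dual_pair_in_cosmos}) to reduce the internal-hom case to the tensor case; and, for $X\oplus Y$, the fact that $[-,W]$ is a right adjoint $\basecatV^{\op}\to\basecatV$ (so turns the coproduct into a product, which in the abelian category $\basecatV$ is again a biproduct) combined with cocontinuity of $(-)\otimes W$. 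The only hypotheses consumed are symmetric monoidal closedness and the existence of biproducts, which is indeed all that the Grothendieck-cosmos assumption contributes here, and every isomorphism in your chains arises from an adjunction, so the naturality in $W$ needed to invoke the criterion is automatic, as you say. In short: a correct and appropriately economical proof of a statement the paper outsources.
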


\begin{proof}
	See \cite[Lem.\ 6.7]{Holm-Odabasi:2019}.
\end{proof}

\begin{proposition}[{\cite[Prop.\ 6.9 (a)]{Holm-Odabasi:2019}}]\label{prop:C1_C2_implies_lfp_base}
	If a Grothendieck cosmos $\moncatV$ satisfies the conditions (C1) and (C2), then it is a locally finitely presentable base.
\end{proposition}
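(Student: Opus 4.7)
My plan is to verify the three defining conditions of a locally finitely presentable base in turn. Condition (ii), that the unit $I$ is finitely presentable, is given directly by (C1). Condition (i) requires a strongly generating set of finitely presentable objects, and Condition (iii) requires closure of finitely presentable objects under $\otimes$.

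First I would show that under (C1), every dualizable object $X$ is finitely presentable. Using the tensor--internal-hom adjunction together with \cref{rem:dual_pair_in_cosmos}, we have natural isomorphisms
\[
\Hom_\basecatV(X,\mplaceholder) \;\cong\; \Hom_\basecatV(I,[X,\mplaceholder]) \;\cong\; \Hom_\basecatV(I, X^*\otimes \mplaceholder),
\]
where $X^*=[X,I]$. The functor $X^*\otimes\mplaceholder$ is a left adjoint (to $X\otimes\mplaceholder$ by dualizability), hence preserves filtered colimits; $\Hom_\basecatV(I,\mplaceholder)$ preserves filtered colimits by (C1). So $\Hom_\basecatV(X,\mplaceholder)$ does, i.e.\ $X\in\basecatV_\fp$. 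Combined with (C2), this gives a generating set $\{g_j\}_{j\in J}\subseteq \basecatV_\fp$, and because $\basecatV$ is a cocomplete abelian category, \cref{prop:equivalent_condition_of_generators_in_abelian_cat} upgrades it to a strongly generating set. This establishes (i).

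It remains to prove (iii), which I expect to be the main obstacle. Fix a finitely presentable object $X\in\basecatV$, and let
\[
\mathcal{F}(X) \;\coloneqq\; \{\,Y\in\basecatV \;:\; Y\otimes X \in \basecatV_\fp\,\}.
\]
The strategy is to show $\mathcal{F}(X)$ contains every finitely presentable $Y$. I would first check that $g_j\in\mathcal{F}(X)$ for each $j$: since $g_j$ is dualizable,
\[
\Hom_\basecatV(g_j\otimes X, \mplaceholder) \;\cong\; \Hom_\basecatV(X, [g_j,\mplaceholder]) \;\cong\; \Hom_\basecatV(X, g_j^* \otimes \mplaceholder),
\]
and $g_j^*\otimes\mplaceholder$ preserves filtered colimits as a left adjoint, while $\Hom_\basecatV(X,\mplaceholder)$ does by hypothesis; hence $g_j\otimes X\in \basecatV_\fp$. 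Next, $\mathcal{F}(X)$ is closed under finite colimits and retracts: the functor $\mplaceholder\otimes X$ preserves all colimits (being a left adjoint, since $\moncatV$ is closed), and finitely presentable objects are themselves closed under finite colimits and retracts by a standard interchange-of-limits argument.

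The proof then concludes by invoking the fact that in a locally finitely presentable abelian category with strong generator $\{g_j\}\subseteq \basecatV_\fp$, the subcategory $\basecatV_\fp$ is the closure of $\{g_j\}$ under finite colimits and retracts. Consequently $\mathcal{F}(X) \supseteq \basecatV_\fp$, so $Y\otimes X$ is finitely presentable whenever both $X$ and $Y$ are, proving (iii). The delicate point of the argument is this last closure fact about $\basecatV_\fp$; I would either cite it from Kelly or deduce it from the presentation of an arbitrary object as a filtered colimit of finite colimits of objects of $\{g_j\}$, using that $\basecatV$ is Grothendieck abelian so cokernels behave well.
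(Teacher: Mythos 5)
Your argument is correct, and its first half (dualizable $\Rightarrow$ finitely presentable via $\Hom_\basecatV(X,\mplaceholder)\cong\Hom_\basecatV(I,[X,I]\otimes\mplaceholder)$, whence conditions (i) and (ii)) coincides with the paper's. Where you diverge is the closure of $\basecatV_\fp$ under $\otimes$. The paper first proves the intermediate characterization that $X\in\basecatV_\fp$ if and only if $X$ admits an exact presentation $P_1\to P_0\to X\to 0$ with $P_0,P_1$ dualizable (using \cite[Prop.\ V.3.4]{Stenstrom:1975} to make the index sets in \cref{prop:equivalent_condition_of_generators_in_abelian_cat}~(iv) finite, and \cref{lem:construction_of_dualizables} to see that finite sums of dualizables are dualizable), and then tensors two such presentations together via the explicit Bourbaki formula $(P_1\otimes Q_0)\oplus(P_0\otimes Q_1)\to P_0\otimes Q_0\to X\otimes Y\to 0$. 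You instead fix $X\in\basecatV_\fp$, check directly that $g_j\otimes X$ is finitely presentable for each dualizable generator $g_j$, and propagate through the class $\mathcal{F}(X)$ using closure of $\basecatV_\fp$ under finite colimits and retracts. Your ``delicate point'' --- that $\basecatV_\fp$ is the closure of $\{g_j\}$ under finite colimits and retracts --- is exactly the content of the Stenstr\"om result the paper invokes: in a Grothendieck category with generating set $\{g_j\}$, every finitely presentable object is the cokernel of a map between finite direct sums of the $g_j$, so it lies in that closure without even needing retracts; alternatively it is the standard Ad\'amek--Rosick\'y/Kelly description of the $\lambda$-presentable objects in a locally presentable category. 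So both proofs ultimately lean on the same structural input; yours trades the explicit tensor-product-of-presentations computation for a softer ``the class of good $Y$ contains the generators and is closed under the relevant colimits'' argument, which is arguably cleaner and generalizes more readily, while the paper's version produces an explicit dualizable presentation of $X\otimes Y$ that it reuses later (in the proof of \cref{lem:S_is_left_exact_when_S0_is_so}).
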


\begin{proof}
	We begin by proving that dualizable objects of $\moncatV$ are finitely presentable. Take a dualizable object $X\in \basecatV$. Then we have composites of isomorphisms
	\[ \Hom_\basecatV(X,\mplaceholder) \cong \Hom_\basecatV(I\otimes X,\mplaceholder) \cong \Hom_\basecatV(I,[X,\mplaceholder]) \cong \Hom_\basecatV(I,[X,I]\otimes\mplaceholder). \]
	Since $I$ is finitely presentable, $\Hom_\basecatV(I,\mplaceholder)$ preserves filtered colimits. Hence $\Hom_\basecatV(X,\mplaceholder)$ also does, which shows that $X$ is finitely presentable.
	From this, we can conclude that $\basecatV$ has a (strongly) generating set of finitely presentable objects and hence is locally finitely presentable.
	
	Second, we claim that $X\in \basecatV$ is finitely presentable if and only if there is an exact sequence
	\begin{equation*}
	P_1 \to P_0 \to X \to 0 \tag{$\ast$}
	\end{equation*}
	with $P_1,P_0$ dualizable. Indeed, if $X$ is finitely presentable, it follows from \cite[Prop.\ V.3.4]{Stenstrom:1975} that there is an exact sequence in \cref{prop:equivalent_condition_of_generators_in_abelian_cat}~(iv) where the indexing sets $I,J$ are finite. Since by \cref{lem:construction_of_dualizables} finite direct products of dualizable objects are dualizable again, we have a desired sequence. On the other hand, if $X$ has a presentation like ($\ast$), dualizable objects $P_0,P_1$, and hence $X$, are finitely presentable.
	
	It remains to check that monoidal products of finitely presentable objects are finitely presentable. If $X,Y\in \basecatV$ are finitely presentable, there exist two exact sequences
	\[ P_1 \to P_0 \to X \to 0, \quad Q_1 \to Q_0 \to Y \to 0 \]
	with $P_1,P_0,Q_1,Q_0$ dualizable. Then it follows from \cite[ChapterII \S3.6 Prop.\ 6]{Bourbaki:1989Algebra1} that
	\[ (P_1\otimes Q_0)\oplus(P_0\otimes Q_1) \to P_0\otimes Q_0 \to X \otimes Y\to 0 \]
	is exact. Thus $X\otimes Y$ is finitely presentable, because both $(P_1\otimes Q_0)\oplus(P_0\otimes Q_1)$ and $P_0\otimes Q_0$ are dualizable.
\end{proof}

\begin{example}
	The category $\Mod(R)$ of modules over a commutative ring $R$ is a Grothendieck category with $R$ a generator and becomes a symmetric monoidal closed category with the ordinary tensor product of modules. This Grothendieck cosmos $\Mod(R)$ satisfies the conditions (C1) and (C2).
\end{example}

\begin{example}
	For a ringed space $(X,\mO_X)$, let $\Mod(X)$ denote the category of sheaves of $\mO_X$-modules on $X$. For an open subset $U\subseteq X$, let a sheaf $S_U$ be the extension by zero of $\mO_U=\res{\mO_X}{U}$. Then $\Mod(X)$ is a Grothendieck category with $\{S_U\mid U\subset X \text{ is open}\}$ a generating set of objects (\cite{Lowen:2004}) and becomes a symmetric monoidal closed category with the tensor product $\otimes_X$ of $\mO_X$-modules and Hom sheaf $\intHom_X$.
	
	If $X$ has a base of compact open subsets, then $\Mod(X)$ is a locally finitely presentable category with $\{S_U\mid \text{$U$ is compact and open}\}$ a generating set of finitely presentable objects. However, unless $X$ itself is compact, $\mO_X$ is not finitely presentable and hence $\Mod(X)$ does not satisfy the condition (C1) (\cite{Prest-Ralph:2010}).
\end{example}

\begin{example}\label{example:G-cosmos_of_quasi-coh}
	The category $\Qcoh(X)$ of quasi-coherent sheaves on a scheme $X$ is a Grothendieck category (\cite[\href{https://stacks.math.columbia.edu/tag/077K}{Tag 077K}]{stacks-project}). The inclusion functor $\Qcoh(X)\to\Mod(X)$ has a right adjoint $Q_X$, called the \emph{coherator} (\cite[\href{https://stacks.math.columbia.edu/tag/08D6}{Tag 08D6}]{stacks-project}). Then the tensor product of $\mO_X$-modules together with $\intHom_X^\qc\coloneqq Q_X \intHom_X$ makes $\Qcoh(X)$ into a symmetric monoidal closed category.
	
	If $X$ is quasi-compact and quasi-separated, then $\Qcoh(X)$ satisfies the condition (C1). Furthermore $\Qcoh(X)$ also fulfills the condition (C2) if $X$ is projective (see \cite[Example 6.3]{Holm-Odabasi:2019}).
\end{example}

\begin{example}\label{example:Ch(R)_satisfy_C1andC2}
	For a commutative ring $R$, let $\Ch(R)$ be the category of cochain complexes of $R$-modules. Define the \emph{sphere complex} $S(R)$ and the \emph{disk complex} $D(R)$ to be complexes concentrated in degree $0$ and in degrees $-1$ and $0$, respectively, as follows:
	\begin{alignat*}{2}
	S(R)\colon \qquad & \cdots\to 0 \to 0 \to R \to 0 \to \cdots, & \qquad & \\
	D(R)\colon \qquad & \cdots \to 0 \to R \xrightarrow{\id} R \to 0 \to \cdots. & &
	\end{alignat*}
	Then $\Ch(R)$ is a Grothendieck category with $\{D(R)[n] \mid n \in \Z\}$ a generating set of objects. It also forms a symmetric monoidal closed category $(\Ch(R),\tten_R,S(R),\tHom_R)$, where $\tten_R$ is the total tensor product of complexes.
	
	It is not hard to see that 
	\[ \Ch(R)_\fp = \{ X\in\Ch(R) \mid \text{$X$ is bounded and each term $X^n$ is finitely presentable} \} \]
	and \cite[Prop.\ 1.6]{Dold-Pippe:1984} shows that
	\[ \Ch(R)_\du = \{ X\in\Ch(R) \mid \text{$X$ is bounded and each term $X^n$ is finitely generated and projective} \}. \]
	In particular, $S(R)$ and $D(R)$ are finitely presentable and dualizable.
	Therefore the Grothendieck cosmos $(\Ch(R),\tten_R)$ satisfies the conditions (C1) and (C2).
\end{example}

\begin{example}\label{example:Ch(V)_satisfy_C1andC2}
	More generally, for any Grothendieck cosmos $\moncatV$, the category $\Ch(\moncatV)$ of cochain complexes in $\moncatV$ becomes a Grothendieck cosmos with the total tensor product and the total Hom complex whose unit object is the sphere complex $S(I)$ of the unit object $I$ of $\moncatV$ (\cite[Thm.\ 3.2]{Garkusha-Jones:2018}). If $\moncatV$ has a generating set $\{g_j\}_{j \in J}$ of objects, then $\{D(g_j)[n] \mid j\in J, n \in \Z\}$ forms a generating set of objects in $\Ch(\moncatV)$.
	
	If $\moncatV$ satisfies the conditions (C1) and (C2), then $\Ch(\moncatV)$ also does. In fact, $S(X)\in \Ch(\moncatV)$ is finitely presentable if $X\in \moncatV$ is so, and $D(X)[n]\in \Ch(\moncatV)$ is dualizable if $X\in \moncatV$ is so.
\end{example}

\begin{example}
	For a commutatve ring $R$, we can define another tensor product on $\Ch(R)$ (\cite{Enochs-Rozas:1997}). For two complexes $M$ and $N$, the \emph{modified tensor product} $M\mten_R N$ is defined as the complex
	\[ \frac{(M\tten_R N)^n}{B^n(M\tten_R N)} \to \frac{(M\tten_R N)^{n+1}}{B^{n+1}(M\tten_R N)}, \quad \overline{x\otimes y}\mapsto \overline{d_M(x)\otimes y}. \]
	The \emph{modified Hom complex} $\mHom_R(M,N)$ is the complex such that
	\[ \Hom_{\Ch(R)}(M,N[n]) \to \Hom_{\Ch(R)}(M,N[n+1]), \quad f \mapsto 
	((-1)^n d_N \circ f^m)_{m \in \Z}. \]
	Then $\Ch(R)$ becomes a symmetric monoidal closed category $(\Ch(R),\mten_R,D(R),\mHom_R)$.
	Each $D(R)[n]$ is dualizable also in this monoidal category, and the Grothendieck cosmos $(\Ch(R),\mten_R)$ satisfies the conditions (C1) and (C2).
\end{example}

\subsection{Grothendieck $\moncatV$-enriched categories}\label{subsec:Grothendieck_enriched_categories}

Let $\moncatV$ be a cosmos, i.e., a complete and cocomplete symmetric monoidal closed category.

\begin{definition}\label{def:V-generators}
	Let $\catA$ be a $\moncatV$-category. We say that a full subcategory $\catC$ is a \emph{$\moncatV$-generating set of objects} if it is small and the underlying functor $(\Lan_F\yoneda)_0$ of the left Kan extension of the Yoneda embedding $\yoneda\colon \catC\to[\catC^\op,\catV]$ along the inclusion $\moncatV$-functor $F\colon\catC\hookrightarrow\catA$ is faithful.
\end{definition}

\begin{proposition}\label{prop:image_of_generating_functor_is_generating_set}
	Let $F\colon \catC\to\catA$ be a $\moncatV$-functor and suppose $\catC$ is small. Write the full subcategory of images of $F$ as $\catA'=\{Fc\mid c\in \catC\}$. If $(\Lan_F\yoneda_{\catC})_0\colon \catA_0\to[\catC^\op,\catV]_0$ is faithful, then $\catA'$ is a $\moncatV$-generating set of objects in $\catA$.
\end{proposition}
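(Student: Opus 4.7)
The plan is to factor $F$ through its object-image $\catA'$ and compare the nerve of $F$ with that of the full-subcategory inclusion $\catA' \hookrightarrow \catA$ via restriction along the factorization.

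First I would write $F = F' \circ F''$, where $F' \colon \catA' \hookrightarrow \catA$ is the full $\moncatV$-subcategory inclusion and $F'' \colon \catC \to \catA'$ agrees with $F$ on both objects and Hom objects (this is well-defined because $F'$ is fully faithful). The $\moncatV$-category $\catA'$ is small, since $\obj(\catA')$ is a surjective image of $\obj(\catC)$; hence it is a legitimate candidate for a $\moncatV$-generating set.

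Next I would apply \cref{prop:Lan_of_yoneda} to both nerves, obtaining $\Lan_F \yoneda_{\catC}(a) \cong \catA(F\mplaceholder,a)$ in $[\catC^\op,\catV]$ and $\Lan_{F'} \yoneda_{\catA'}(a) \cong \catA(F'\mplaceholder,a)$ in $[\catA'^\op,\catV]$. Let $R \colon [\catA'^\op,\catV] \to [\catC^\op,\catV]$ be the restriction $\moncatV$-functor along $F''^\op$. Evaluating at $c\in\catC$ gives
\[
R\bigl(\Lan_{F'}\yoneda_{\catA'}(a)\bigr)(c) \;=\; \catA(F'F''c,a) \;=\; \catA(Fc,a) \;=\; \Lan_F\yoneda_{\catC}(a)(c),
\]
naturally in $c$ and $a$. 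This identification promotes to a $\moncatV$-natural isomorphism $\Lan_F \yoneda_{\catC} \cong R \circ \Lan_{F'} \yoneda_{\catA'}$ of $\moncatV$-functors $\catA \to [\catC^\op,\catV]$, because $\moncatV$-functors with prescribed hom-object action are determined by the above formula via the universal property of the representables.

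Finally, I would pass to underlying ordinary categories, yielding $(\Lan_F \yoneda_{\catC})_0 \cong R_0 \circ (\Lan_{F'}\yoneda_{\catA'})_0$. Since a composite $G\circ H$ of ordinary functors can be faithful only if $H$ is faithful, and the composite is faithful by assumption, I conclude $(\Lan_{F'}\yoneda_{\catA'})_0$ is faithful. Combined with the smallness of $\catA'$, this is exactly the defining condition for $\catA'$ to be a $\moncatV$-generating set of objects in $\catA$. I do not anticipate any serious obstacle: the whole argument is driven by the explicit formula from \cref{prop:Lan_of_yoneda} and the elementary fact that the first factor of a faithful composite is itself faithful.
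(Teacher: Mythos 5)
Your proposal is correct and follows essentially the same route as the paper: both factor the nerve $\Lan_F\yoneda_{\catC}$ as the restriction functor along $\catC\to\catA'$ composed with $\Lan_{\iota}\yoneda_{\catA'}$ for the inclusion $\iota\colon\catA'\hookrightarrow\catA$, and then use that the first factor of a faithful composite is faithful. The paper's proof is just a terser version of yours, writing the restriction as $F^*$ and citing the same isomorphism of nerves.
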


\begin{proof}
	Since $\catC$ is small, $\catA'$ is also small. If $\iota\colon\catA'\hookrightarrow\catA$ denotes the inclusion functor, then we see
	\[
	\Lan_F \yoneda_{\catC} \cong F^* \circ \Lan_\iota \yoneda_{\catA'} : \catA \to [{\catA'}^\op,\catV] \to [\catC^\op,\catV].
	\]
	By assumption, $(\Lan_F\yoneda_\catC)_0$, and hence $(\Lan_\iota\yoneda_{\catA'})_0$, is faithful. This shows that $\catA'$ is a $\moncatV$-generating set of objects in $\catA$.
\end{proof}

\begin{proposition}\label{prop:generators_form_V-generators}
	Let $\catA$ be a  $\moncatV$-category and $\catC$ a full subcategory. If $\catC_0$ is a generating set of objects in $\catA_0$, then $\catC$ is a $\moncatV$-generating set of objects in $\catA$.
\end{proposition}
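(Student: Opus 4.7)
The plan is to unwind the definition: I need to show that the underlying functor $(\Lan_F \yoneda)_0 \colon \catA_0 \to [\catC^\op,\catV]_0$ is faithful, where $F\colon \catC \hookrightarrow \catA$ is the inclusion $\moncatV$-functor. (Smallness of $\catC$ follows from smallness of the generating family $\catC_0$.) By \cref{prop:Lan_of_yoneda}, $\Lan_F \yoneda$ is isomorphic to the restricted representable $a \mapsto \catA(F\mplaceholder,a)$, so on morphisms it sends $f\colon a\to b$ in $\catA_0$ to the $\moncatV$-natural transformation whose component at $c\in \catC$ is the post-composition map $\catA(c,f)\colon \catA(c,a)\to \catA(c,b)$ in $\moncatV$.

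Next, I would apply the underlying-set functor $\Hom_\basecatV(I,\mplaceholder)\colon \basecatV \to \Set$. Since this functor is faithful on morphisms, two parallel morphisms $\alpha,\beta$ in $\basecatV$ are equal as soon as they agree after applying $\Hom_\basecatV(I,\mplaceholder)$; but for the specific morphisms $\catA(c,f)$ and $\catA(c,g)$, applying $\Hom_\basecatV(I,\mplaceholder)$ yields the post-composition maps $\catA_0(c,f),\catA_0(c,g)\colon \catA_0(c,a) \to \catA_0(c,b)$.

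So suppose $f,g\colon a\to b$ in $\catA_0$ satisfy $(\Lan_F \yoneda)_0(f) = (\Lan_F \yoneda)_0(g)$. Then for every $c\in \catC$ one has $\catA_0(c,f) = \catA_0(c,g)$ in $\Set$, i.e.\ $f\circ h = g\circ h$ for every morphism $h\colon c\to a$ in $\catA_0$. Because $\catC_0$ is a generating set of objects in $\catA_0$, condition (i) of \cref{defprop:generator} forces $f=g$. This establishes that $(\Lan_F\yoneda)_0$ is faithful, hence $\catC$ is a $\moncatV$-generating set of objects in $\catA$.

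There is no genuine obstacle here; the content is just the compatibility between $\moncatV$-enriched representables and their underlying $\Set$-valued representables together with the observation that the $\moncatV$-natural transformations corresponding to $f$ and $g$ are determined by their components, which in turn are determined by post-composition on hom-sets. The only points that deserve a sentence are (a) invoking \cref{prop:Lan_of_yoneda} to identify $\Lan_F \yoneda$ with restricted representables, and (b) invoking the faithfulness of $\Hom_\basecatV(I,\mplaceholder)$ to pass from equality in $\basecatV$ to equality of underlying set maps, so that the classical generating-set criterion of \cref{defprop:generator} can be applied.
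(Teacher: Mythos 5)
Your proof is correct and takes essentially the same route as the paper's: identify $\Lan_F\yoneda$ with the restricted representable via \cref{prop:Lan_of_yoneda}, observe that applying $\Hom_\basecatV(I,\mplaceholder)$ to the component $\catA(c,f)$ yields $\catA_0(c,f)$, and conclude from condition (i) of \cref{defprop:generator}. One small remark: your appeal to the \emph{faithfulness} of $\Hom_\basecatV(I,\mplaceholder)$ is both unnecessary and false for a general cosmos (e.g.\ graded abelian groups); the implication you actually use is the trivial one, that equality of the components $\catA(c,f)=\catA(c,g)$ in $\basecatV$ implies equality of the underlying set maps after applying the functor $\Hom_\basecatV(I,\mplaceholder)$.
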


\begin{proof}
	Let $F\colon\catC\hookrightarrow\catA$ be the inclusion functor. By \cref{prop:Lan_of_yoneda}, the underlying functor $(\Lan_F\yoneda)_0\colon \catA_0 \to \Fun(\catC^\op,\catV)$ of the left Kan extension is given by the following correspondances:
	\begin{align*}
	\catA_0 \ni A &\mapsto 
	(\Lan_F\yoneda)_0(A)=\res{\catA(\mplaceholder,A)}{\catC}, \\
	\catA_0(A,B) \ni f &\mapsto (\Lan_F\yoneda)_0(f)=f\circ\mplaceholder\colon \res{\catA(\mplaceholder,A)}{\catC}\to\res{\catA(\mplaceholder,B)}{\catC}.
	\end{align*}
	For $c\in \catC_0$, we have $\catA_0(c,f)= U(\catA(c,f))=U(f\circ\mplaceholder)$. In order to prove $(\Lan_F\yoneda)_0$ is faithful, we assume $(\Lan_F\yoneda)_0(f)=(\Lan_F\yoneda)_0(g)$ for any pair $f, g\in \catA_0(A,B)$. Then we have $\catA_0(c,f)=\catA_0(c,g)$ for all $c\in \catC_0$, which yields $f=g$ since $\catC_0$ is a generating set of objects of $\catA_0$. Thus $\catC$ is a $\moncatV$-generating set of objects.
\end{proof}

Inspired by the Gabriel-Popescu theorem, we define an enriched version of Grothendieck categories as follows.

\begin{definition}\label{def:Grothendieck_V-category}
	Let $\moncatV$ be a locally finitely presentable base. A $\moncatV$-category $\catA$ is said to be a \emph{Grothendieck $\moncatV$-category} if there exist a small $\moncatV$-category $\catC$ and a $\moncatV$-adjunction
	\[
	\begin{tikzpicture}
	\node at (-0.2,0) {$S:[\catC^\op,\catV]$};
	\node at (3.1,0) {$\catA : T$};
	\draw[->] (0.9,0.16) to (2.5,0.16);
	\draw[->] (2.5,-0.16) to (0.9,-0.16);
	\node at (1.7,0) {$\perp$};
	\end{tikzpicture}
	\]
	such that
	\begin{enumerate}
		\item the right adjoint $T$ is fully faithful and 
		\item the left adjoint $S$ is left exact.
	\end{enumerate}
\end{definition}

Remember that Grothendieck cosmoi with the conditions (C1) and (C2) are locally finitely presentable bases by \cref{prop:C1_C2_implies_lfp_base}.

\begin{remark}\label{remark:Ivan's_comments}
\cref{def:Grothendieck_V-category} also appears in \cite{Garner-Lack:2012} under the name of \emph{$\moncatV$-topoi}.
In \cite{Garner-Lack:2012} full subcategories of a category whose inclusion functor has a left exact left adjoint, as in \cref{def:Grothendieck_V-category}, are called \emph{localizations}.
The study of localizations of a category has a long history; see the answers~\cite{DiLiberti:MathOverflow349948,DiLiberti:MathOverflow389388} to questions on mathoverflow for a list of references.
\end{remark}

\begin{remark}
If we take $\moncatV=\Ab$, then Grothendieck $\Ab$-categories are the ordinary Grothendieck categories and if $\moncatV=\Set$, then Grothendieck $\Set$-categories are the Grothendieck topoi, the categories of sheaves on sites. This indicates that Grothendieck categories can be seen as the categories of linear sheaves on linear sites.
In fact, Gabriel topologies are precisely the $\Z$-linear version of Grothendieck topologies.
Lowen \cite{Lowen:2016} and Ramos Gonz\'alez \cite{RamosGonzalez:2018} study Grothendieck categories from the viewpoint of the theory of linear sheaves. See also \cite[\S2.1]{DiLiberti-RamosGonzalez:2021}.
\end{remark}

For a locally finitely presentable base $\moncatV$, the $\moncatV$-category $\catV$ itself and the presheaf category $[\catC^\op,\catV]$ on a small $\moncatV$-category $\catC$ are examples of Grothendieck $\moncatV$-categories.
Note that since Grothendieck $\moncatV$-categories are reflective full subcategories of presheaf categories, they are complete and cocomplete by \cite[Prop.\ 3.75]{Kelly:1982Basic}.

\cref{def:Grothendieck_V-category} of Grothendieck $\moncatV$-categories is extrinsic. On the other hand, under the assumption that $\moncatV$ is a nice Grothendieck cosmos, the following theorem gives an intrinsic characterization.

\begin{theorem}\label{thm:main_theorem}
	Let $\moncatV$ be a Grothendieck cosmos with the conditions (C1) and (C2). Then a $\moncatV$-category $\catA$ is a Grothendieck $\moncatV$-category if and only if it fulfills the following conditions.
	\begin{enumerate}
		\item $\catA$ is cocomplete.
		\item $\catA$ is finitely complete.
		\item $\catA$ has a $\moncatV$-generating set $\catC$ of objects.
		\item The homomorphism theorem holds in $\catA$, that is, for any morphism $f$ in $\catA_0$ the canonical map $\Coker(\Kernel(f))\to\Kernel(\Coker(f))$ is an isomorphism.
		\item Conical filtered colimits are left exact, which means that for any ordinary filtered category $\catJ$, the colimit $\moncatV$-functor $\colim_\catJ \colon [\catJ_\moncatV,\catA]\to\catA$ preserves finite limits.
	\end{enumerate}
\end{theorem}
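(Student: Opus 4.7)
The forward direction is routine, so the plan is to dispatch it quickly: given the $\moncatV$-adjunction $S \dashv T$ from \cref{def:Grothendieck_V-category}, the $\moncatV$-category $\catA$ becomes a reflective $\moncatV$-subcategory of the cocomplete and finitely complete presheaf $\moncatV$-category $[\catC^\op,\catV]$, which gives (i)--(ii); the essential images $\{S(\yoneda c)\}_{c\in\catC}$ form a $\moncatV$-generating set in $\catA$ because $T$ is fully faithful and the representables $\moncatV$-generate $[\catC^\op,\catV]$ (via \cref{prop:Lan_of_yoneda}), giving (iii); and (iv)--(v) transfer across the reflection using left exactness of $S$, its preservation of filtered colimits as a left adjoint, and the fully-faithfulness of $T$.

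For the converse, I would take the $\moncatV$-generating set $\catC$ provided by (iii) and, using cocompleteness (i), apply \cref{thm:ubiquitus_adjunction} to obtain the nerve-and-realization $\moncatV$-adjunction $S = \Lan_\yoneda F \dashv T = \Lan_F \yoneda \colon [\catC^\op,\catV] \rightleftarrows \catA$ associated with the inclusion $F\colon\catC\hookrightarrow\catA$. The remaining task is to prove that $T$ is fully faithful and $S$ is left exact. By \cref{prop:right_adjoint_is_fullyfaithful_iff_its_underlying_is_so} it suffices to show $T_0$ is fully faithful, and by an auxiliary lemma asserting that $S$ is left exact once $S_0$ is (combined with \cref{prop:finite_limit_is_finconicallimit_and_finitecotensor}, which under (C1)--(C2) resolves finite $\moncatV$-limits into finite conical limits and cotensors with finitely presentable objects), it suffices to show $S_0$ preserves ordinary finite limits.

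The core of the plan is then to verify that $\catA_0$ is an ordinary Grothendieck category and apply the classical Gabriel-Popescu theorem (\cref{thm:gabriel_popescu}). Since $\moncatV$ is a Grothendieck cosmos, the lax monoidal functor $\Hom_\basecatV(I,\mplaceholder)\colon \moncatV \to \Ab$ promotes $\catA$ to an $\Ab$-enriched category refining its ordinary structure; conditions (i)--(ii) supply all small colimits and finite limits in $\catA_0$, which together with (iv) and additivity yield the abelian axioms; condition (v), applied to ordinary filtered categories viewed as free $\moncatV$-categories, gives exactness of filtered colimits in $\catA_0$; and (iii) together with \cref{prop:generators_form_V-generators} identifies $\catC_0$ as a generating set in the classical sense. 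Classical Gabriel-Popescu then yields fully-faithfulness and left-exactness at the underlying-category level.

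The main obstacle I anticipate is reconciling the classical Gabriel-Popescu conclusion---an adjunction with target $\Mod(\catC_0)$---with our enriched $T_0 \dashv S_0$ whose presheaf target $[\catC^\op,\catV]_0$ is richer than $\Mod(\catC_0)$. The plan for handling this is to use (C2): the fully-faithfulness of $T_0$ and the left exactness of $S_0$ are both expressible as the iso-ness of certain canonical comparison maps in $\basecatV$, and by joint conservativity of $\{\Hom_\basecatV(g_j,\mplaceholder)\}_{j\in J}$ for the dualizable generators these tests can be performed after applying each $\Hom_\basecatV(g_j,\mplaceholder)$; dualizability (\cref{prop:tensoring_with_dualizable_is_absolute}) then converts each such test into a statement about ordinary Hom sets $\catA_0(g_j \tensor X, Y)$, which is precisely the content of classical Gabriel-Popescu applied to $\catA_0$ with the enlarged (but still small) generating family $\{g_j \tensor Fc : j \in J,\ c \in \catC_0\}$, the absoluteness of tensors with dualizables ensuring this remains a generating set.
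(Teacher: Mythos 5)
Your overall strategy coincides with the paper's: reduce everything to the underlying functors $T_0$ and $S_0$ (via \cref{prop:right_adjoint_is_fullyfaithful_iff_its_underlying_is_so} and \cref{lem:S_is_left_exact_when_S0_is_so}), show $\catA_0$ is an ordinary Grothendieck category, and invoke the classical Gabriel--Popescu theorem for the enlarged family $\{g_j\tensor Fc\}$. But two points do not hold up as written. First, your claim that condition (iii) together with \cref{prop:generators_form_V-generators} ``identifies $\catC_0$ as a generating set in the classical sense'' is backwards: that proposition goes from ordinary generation of $\catA_0$ to $\moncatV$-generation of $\catA$, and the converse is false in general, since $\moncatV$-generation only asserts joint faithfulness of the $\basecatV$-valued functors $\catA(c,\mplaceholder)$, while ordinary generation asserts joint faithfulness of $\Hom_\basecatV(I,\catA(c,\mplaceholder))$. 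This is precisely why the enlarged family is unavoidable, and showing that $\{g_j\tensor c\}$ actually generates $\catA_0$ is itself a step requiring proof: the paper computes that $S\circ G$ is fully faithful on the subcategory $\catG$ spanned by $\{g_j\tensor\catC(\mplaceholder,c)\}$ (using \cref{prop:tensoring_with_dualizable_is_absolute}) and then identifies $\Lan_H\yoneda$ with $T'\circ T_0$, where $S'\dashv T'\colon\Mod(\catG_0)\to\Fun(\catC^\op,\catV)$ is the Gabriel--Popescu adjunction for the Grothendieck category $\Fun(\catC^\op,\catV)$ of \cref{prop:Grothen_cat_of_enriched_functors}. Your sketch contradicts itself here: paragraph three applies Gabriel--Popescu with $\catC_0$, paragraph four with $\{g_j\tensor Fc\}$.

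Second, and more seriously, the mechanism you propose for left exactness of $S_0$ has a gap. Testing the comparison map $S(\lim_k P_k)\to\lim_k S(P_k)$ against the jointly conservative family $\catA_0(g_j\tensor c,\mplaceholder)$ requires computing $\catA_0\bigl(g_j\tensor c, S(\lim_k P_k)\bigr)$, and no adjunction or dualizability trick does this directly: $g_j\tensor c\cong S(g_j\tensor\yoneda c)$ sits on the wrong side of $S\dashv T$. Classical Gabriel--Popescu applied to $\catA_0$ with generating set $\{g_j\tensor c\}$ gives left exactness of the \emph{composite} $S_0\circ S'\cong\Lan_\yoneda H$, not of $S_0$ itself. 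Extracting left exactness of $S_0$ requires the second Gabriel--Popescu adjunction: since $T'$ is fully faithful with left exact left adjoint $S'$, every finite limit in $\Fun(\catC^\op,\catV)$ satisfies $\lim_k P_k\cong S'(\lim_k T'P_k)$, whence $S_0(\lim_k P_k)\cong S_0S'(\lim_k T'P_k)\cong\lim_k S_0S'T'P_k\cong\lim_k S_0P_k$. This intermediate application of \cref{thm:gabriel_popescu} to $\Fun(\catC^\op,\catV)$ with the generating set of \cref{prop:Grothen_cat_of_enriched_functors} is a genuinely necessary ingredient that your plan omits. (The same two-out-of-three device handles fullness of $T_0$: $T'$ and $T'\circ T_0$ fully faithful imply $T_0$ fully faithful; your conservativity argument for the counit could be made to work, but only after the generation and abelianness of $\catA_0$ are in place, which again routes through the missing step.)
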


Note that $\catA_0$ has the natural $\Ab$-enriched structure. Hence we can make sense of the (co)kernel $\Kernel$ (resp.\ $\Coker$) of a morphism as the (co)equalizer with the zero morphism, whose existence is guaranteed by the finite (co)completeness of $\catA$.

\begin{proposition}[{\cite[Thm.\ 4.2]{AlHwaeer-Garkusha:2016}}]\label{prop:Grothen_cat_of_enriched_functors}
	Let $\moncatV$ be a Grothendieck cosmos with $\{g_j\}_{j \in J}$ a generating set of objects. For a small $\moncatV$-category $\catC$, the category $\Fun(\catC,\catV)$ of $\moncatV$-functors is a Grothendieck category with $\{g_j\tensor \catC(c,\mplaceholder)\mid j \in J,\, c\in \catC\}$ a generating set of objects.
\end{proposition}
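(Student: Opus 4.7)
The plan is to transfer each defining property of a Grothendieck category from $\basecatV$ to $\Fun(\catC,\catV)$ by exploiting the fact that limits and colimits in the $\moncatV$-category $[\catC,\catV]$ are formed pointwise, and then to identify morphisms out of $g_j \tensor \catC(c,\mplaceholder)$ via the enriched Yoneda lemma combined with the tensoring of $[\catC,\catV]$ over $\moncatV$.

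First I would observe that since $\catC$ is small and $\catV$ is complete and cocomplete, the presheaf $\moncatV$-category $[\catC,\catV]$ is complete and cocomplete, and all weighted (co)limits in it are computed pointwise. Passing to the underlying category $\Fun(\catC,\catV) = [\catC,\catV]_0$, arbitrary small (co)products, as well as kernels and cokernels of $\moncatV$-natural transformations, exist and are formed pointwise in $\basecatV$. Because $\basecatV$ is abelian, this pointwise (co)kernel structure makes $\Fun(\catC,\catV)$ into an abelian category. Next I would check the exactness of filtered colimits: such colimits in $\Fun(\catC,\catV)$ are pointwise, and exactness of a sequence is also tested pointwise, so the fact that $\basecatV$ is a Grothendieck category immediately gives exactness of filtered colimits in $\Fun(\catC,\catV)$.

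For the generating property of $\mathcal{S} \coloneqq \{g_j \tensor \catC(c,\mplaceholder) \mid j \in J,\, c \in \catC\}$, I would chain, for $F \in [\catC,\catV]$, the isomorphisms
\begin{align*}
\Fun(\catC,\catV)(g_j \tensor \catC(c,\mplaceholder),\, F)
&\cong \Hom_\basecatV\bigl(I,\, [\catC,\catV](g_j \tensor \catC(c,\mplaceholder),\, F)\bigr) \\
&\cong \Hom_\basecatV\bigl(I,\, [g_j,\, [\catC,\catV](\catC(c,\mplaceholder), F)]\bigr) \\
&\cong \Hom_\basecatV\bigl(I,\, [g_j,\, F(c)]\bigr) \\
&\cong \Hom_\basecatV(g_j,\, F(c)),
\end{align*}
where the second line is the defining property of the tensoring $g_j \tensor (\mplaceholder)$ in $[\catC,\catV]$, the third is the enriched Yoneda lemma (\cref{thm:enriched_Yoneda_lemma}), and the last is the tensor--internal-hom adjunction together with the unit isomorphism in $\basecatV$. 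If $\alpha,\beta \colon F \to F'$ in $\Fun(\catC,\catV)$ are equalized by precomposition with every morphism from any member of $\mathcal{S}$, the bijection shows that the components $\alpha_c$ and $\beta_c$ are equalized by every morphism $g_j \to F(c)$ in $\basecatV$; since $\{g_j\}_{j \in J}$ is a generating set in $\basecatV$, this forces $\alpha_c = \beta_c$ for each $c \in \catC$, and hence $\alpha = \beta$.

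The only delicate point I anticipate is confirming that the pointwise (co)kernels of $\moncatV$-natural transformations indeed carry a canonical $\moncatV$-functor structure that realizes the enriched (co)kernel in $[\catC,\catV]$ (and therefore the (co)kernel in $\Fun(\catC,\catV)$). This is however a standard consequence of the pointwise formation of enriched (co)limits in $\moncatV$-valued functor categories on small $\moncatV$-categories, so no further work is needed beyond invoking that fact.
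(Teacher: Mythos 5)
Your argument is correct, but note that the paper does not prove this statement at all: it is imported wholesale from Al~Hwaeer--Garkusha \cite[Thm.\ 4.2]{AlHwaeer-Garkusha:2016}, so there is no internal proof to compare against. Your self-contained verification is sound and follows the expected route. The pointwise reduction works because $[\catC,\catV]$ is tensored and cotensored, so by \cref{prop:tensored_and_ordinary_limit_implies_conical_limit} ordinary (co)limits in $\Fun(\catC,\catV)=[\catC,\catV]_0$ agree with conical ones in $[\catC,\catV]$, which are computed objectwise; this settles abelianness, (AB5), and the ``delicate point'' you flag about pointwise (co)kernels carrying a $\moncatV$-functor structure. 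Your computation
\[
\Fun(\catC,\catV)\bigl(g_j \tensor \catC(c,\mplaceholder),\, F\bigr)\;\cong\;\Hom_\basecatV(g_j,\,F(c))
\]
is the right use of the tensoring together with \cref{thm:enriched_Yoneda_lemma}, and naturality in $F$ does identify postcomposition with $\alpha$ on the left with postcomposition with the component $\alpha_c$ on the right, so the generating property of $\{g_j\}_{j\in J}$ in $\basecatV$ transfers as you claim. The only cosmetic remark is that you should note explicitly that two $\moncatV$-natural transformations with equal components are equal (immediate from the definition of $\Fun(\catC,\catV)$ via the equalizer presentation of $[\catC,\catV](F,F')$), which is the last step in concluding $\alpha=\beta$. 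What your approach buys over the paper's citation is independence from the external reference; what it costs is nothing, since every ingredient is already available in the paper's preliminaries.
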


\begin{lemma}\label{lem:S_is_left_exact_when_S0_is_so}
	Let $\moncatV$ be a Grothendieck cosmos with the conditions (C1) and (C2). For a finitely complete $\moncatV$-category $\catC$, a $\moncatV$-functor $S\colon \catC\to\catD$ is left exact if and only if its underlying functor $S_0\colon \catC_0\to\catD_0$ is left exact.
\end{lemma}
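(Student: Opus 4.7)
My plan is to reduce the statement to two classes of weighted limits by appealing to \cref{prop:finite_limit_is_finconicallimit_and_finitecotensor} and \cref{prop:condition_for_left_exact}. These say that under the locally finitely presentable base hypothesis (which holds by \cref{prop:C1_C2_implies_lfp_base}), the $\moncatV$-functor $S$ is left exact if and only if it preserves (a) finite conical limits indexed by ordinary finite categories, and (b) cotensor products $X \cotensor C$ with $X \in \basecatV_\fp$. Thus both implications reduce to checking these two types of preservation.

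For the forward direction, if $S$ is $\moncatV$-left exact then in particular it preserves finite conical limits; because conical limits in $\catC$ descend to ordinary limits in $\catC_0$, the underlying functor $S_0$ preserves finite ordinary limits, hence is left exact.

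For the (more substantial) converse, suppose $S_0$ is left exact. Preservation of (a) is automatic, since the conical limit in $\catC$ of an ordinary finite diagram $H\colon \catL \to \catC_0$ is an ordinary limit in $\catC_0$ (see \cref{prop:tensored_and_ordinary_limit_implies_conical_limit} and the discussion preceding it), so $S_0$ being left exact does the job. For (b), I use the characterization from the proof of \cref{prop:C1_C2_implies_lfp_base}: each finitely presentable $X \in \basecatV$ fits into an exact sequence
\[ P_1 \xrightarrow{\;\varphi\;} P_0 \to X \to 0 \]
with $P_0,P_1$ dualizable. Applying the left exact functor $[{-},Z]$ to this exact sequence for any $Z\in\basecatV$ shows that $X\cotensor C$ is the kernel (in the $\Ab$-enriched underlying category) of the induced map $P_0 \cotensor C \to P_1 \cotensor C$. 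Now $P_0\cotensor C$ and $P_1\cotensor C$ are absolute limits by \cref{prop:tensoring_with_dualizable_is_absolute}, so they are preserved by $S$; hence $S(X\cotensor C)$ is computed by applying $S_0$ to the above kernel diagram in $\catC_0$, and by the left exactness of $S_0$ this kernel coincides with $X\cotensor SC$.

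The main point where one must be careful is verifying that the kernel description of $X\cotensor C$ is natural enough that $S$-applied-to-it produces the canonical comparison map to $X\cotensor SC$; this is really just bookkeeping using the absoluteness isomorphisms $S(P_i\cotensor C)\cong P_i \cotensor SC$ of \cref{prop:tensoring_with_dualizable_is_absolute}, together with the fact that both sides represent the kernel of the same induced morphism. No new ideas beyond the two cited propositions and the presentation of finitely presentable objects are required.
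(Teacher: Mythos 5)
Your proposal is correct and follows essentially the same route as the paper's proof: reduce via \cref{prop:condition_for_left_exact} to finite conical limits (handled by $S_0$) and finite cotensors, then present each $X\in\basecatV_\fp$ by dualizables $P_1\to P_0\to X\to 0$, turn this into a kernel presentation $0\to X\cotensor c\to P_0\cotensor c\to P_1\cotensor c$, and conclude using the absoluteness of $P_i\cotensor c$ from \cref{prop:tensoring_with_dualizable_is_absolute} together with the left exactness of $S_0$. No substantive differences.
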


\begin{proof}
	It is obvious that if $S$ is left exact, then so is $S_0$.
	
	If $S_0$ is left exact, then $S$ preserves conical finite limits on ordinary finite categories. Hence, it is sufficient by \cref{prop:condition_for_left_exact} to show that $S$ preserves cotensor products $X\cotensor c$ with $X\in \basecatV_\fp$ and $c\in \catC$. From the proof of \cref{prop:C1_C2_implies_lfp_base}, for any $X\in \basecatV_\fp$ there is an exact sequence
	\[ P_1 \to P_0 \to X \to 0 \]
	with $P_1,P_0$ dualizable in $\basecatV$. Since the cotensor product forms the $\moncatV$-adjunction $\catC(\mplaceholder,c) \dashv \mplaceholder\cotensor c$, the right adjoint $\mplaceholder\cotensor c\colon \catV^\op\to\catC$ preserves limits, and so we have the exact sequence
	\[ 0\to X\cotensor c \to P_0 \cotensor c \to P_1\cotensor c. \]
	From the left exactness of $S_0$, we also obtain the exact sequence
	\[ 0\to S(X\cotensor c) \to S(P_0\cotensor c) \to S(P_1\cotensor c). \]
	Now, by \cref{prop:tensoring_with_dualizable_is_absolute}, the cotensor products $P_0 \cotensor c, P_1\cotensor c$ are preserved by $S$ because $P_0,P_1$ are dualizable. Thus
	\[ 0\to S(X\cotensor c) \to P_0\cotensor S(c) \to P_1\cotensor S(c) \]
	is also exact, yielding $S(X\cotensor c)\cong X\cotensor S(c)$. This proves that $S$ is left exact.
\end{proof}

Now we are ready to give the proof of \cref{thm:main_theorem}.

\begin{proof}[Proof of \cref{thm:main_theorem}]
	In order to prove 
	the necessity, let $\catA$ be a Grothendieck $\moncatV$-category and $S \dashv T \colon [\catC^\op,\catV]\to \catA$ the $\moncatV$-adjunction of \cref{def:Grothendieck_V-category}. Note that $\catA$ is complete and cocomplete since it is a reflective full subcategory of the presheaf category $[\catC^\op,\catV]$. \cref{thm:adjunction_from_functor_cat} leads to $T\cong \Lan_F \yoneda$, where $F\coloneqq S\circ \yoneda\colon \catC\to \catA$. In particular, $T_0\cong (\Lan_F\yoneda)_0$ is faithful. Thus, using \cref{prop:image_of_generating_functor_is_generating_set}, we see that $\catA$ has $\catA'=\{Fc\mid c\in \catC\}$ as a $\moncatV$-generating set of objects.
	
	The underlying category $\catA_0$ is a Giraud subcategory of the Grothendieck category $\Fun(\catC^\op,\catV)$, so it follows from \cite[Prop.\ X.1.3]{Stenstrom:1975} that $\catA_0$ is also a Grothendieck category. Note that the homomorphism theorem holds in $\catA$ and filtered colimits are left exact. \cref{lem:S_is_left_exact_when_S0_is_so} shows that conical filtered colimits in $\catA$ are left exact as well.
	
	It remains to prove 
	the sufficiency. Let $F\colon \catC\hookrightarrow\catA$ denote the inclusion functor of the generating set $\catC$ of objects. The cocompleteness of $\catA$ leads by \cref{thm:ubiquitus_adjunction} to the $\moncatV$-adjunction, the nerve-and-realization adjunction associated with $F$, as in the following diagram
	\[
	\begin{tikzpicture}[auto]
	\node (hatC) at (0,0) {$[\catC^\op,\catV]$};
	\node (C) at (0,-2) {$\catC$}; \node (D) at (2.8,-2) {$\catA$.};
	
	\draw[->] (C) to node {$\scriptstyle \yoneda$} (hatC);
	\draw[>->] (C) to node[swap] {$\scriptstyle F$} (D);
	\draw[->] (0.9,0) to node {$\scriptstyle \Lan_\yoneda F$}  (2.8,-1.4);
	\draw[->] (2.5,-1.7) to node {$\scriptstyle \Lan_F \yoneda$} node[sloped,pos=0.45] {$\perp$} (0.6,-0.3);
	\end{tikzpicture}
	\]
	If we write $S=\Lan_\yoneda F$ and $T=\Lan_F \yoneda$, then we want to show that $S$ is left exact and that $T$ is fully faithful. For this purpose, it is sufficient by \cref{lem:S_is_left_exact_when_S0_is_so} and \cref{prop:right_adjoint_is_fullyfaithful_iff_its_underlying_is_so} to observe that $S_0$ left exact and $T_0$ is fully faithful. Note that $T_0$ is faithful since $\catC$ is a generating set of objects.
	
	We recall from \cref{prop:Grothen_cat_of_enriched_functors} that $\Fun(\catC^\op,\catV)$ is a Grothendieck category that has $\{g_j \tensor \catC(\mplaceholder,c)\mid j \in J,\, c\in \catC\}$ as a generating set of objects. Let $\catG$ be the full $\catV$-subcategory of $[\catC^\op,\catV]$ whose objects are $\{g_j \tensor \catC(\mplaceholder,c)\mid j \in J,\, c\in \catC\}$, and $G\colon \catG\hookrightarrow [\catC^\op,\catV]$ be its inclusion $\moncatV$-functor. Then the underlying functor $G_0\colon \catG_0 \hookrightarrow \Fun(\catC^\op,\catV)$ becomes an $\Ab$-functor, with which we get the associated nerve-and-realization $\Ab$-adjunction $S'\dashv T'$. The Gabriel-Popescu \cref{thm:gabriel_popescu} states that the left adjoint $S'$ is left exact and the right adjoint $T'$ is fully faithful:
	\[
	\begin{tikzpicture}[auto]
	\node (hatC) at (0,0) {$\Mod(\catG_0)$};
	\node (C) at (0,-2) {$\catG_0$}; \node (D) at (2.9,-2) {$\Fun(\catC^\op,\catV)$.};
	
	\draw[>->] (C) to node {$\scriptstyle \yoneda$} (hatC);
	\draw[>->] (C) to node[swap] {$\scriptstyle G_0$} (D);
	\draw[->] (0.9,0) to node {$\scriptstyle S'$}  (2.8,-1.4);
	\draw[>->] (2.5,-1.7) to node {$\scriptstyle T'$} node[sloped,pos=0.45] {$\perp$} (0.6,-0.3);
	\end{tikzpicture}
	\]
	Now the composite $S\circ G\colon \catG \to [\catC^\op,\catV]\to \catA$ of $\moncatV$-functors is fully faithful. Indeed, for each object $g_j \tensor c$ of $\catG$ we have
	\[ S\circ G(g_j \tensor \catC(\mplaceholder,c))\cong g_j \tensor S(\catC(\mplaceholder,c)) \cong g_j \tensor c. \]
	Thus 
	\begin{alignat*}{2}
	&\catG(g_j \tensor \catC(\mplaceholder,c), g_{j'} \tensor \catC(\mplaceholder,c'))  & &  \\
	&= [\catC^\op,\catV](g_j \tensor \catC(\mplaceholder,c), g_{j'} \tensor \catC(\mplaceholder,c')) & & \\
	&\cong \catV(g_j, [\catC^\op,\catV](\catC(\mplaceholder,c), g_{j'} \tensor \catC(\mplaceholder,c'))) & &\text{by the definition of $g_j\tensor \catC(\mplaceholder,c)$,}\\
	&\cong \catV(g_j, g_{j'} \tensor \catC(c,c')) && \text{by the Yoneda lemma,} \\
	&= \catV(g_j, g_{j'} \tensor \catA(c,c')) &&\\
	&\cong \catV(g_j,\catA(c, g_{j'}\tensor c')) && \text{by \cref{prop:tensoring_with_dualizable_is_absolute},} \\
	&\cong \catA(g_j \tensor c, g_{j'}\tensor c') && \text{by the definition of $g_j\tensor c$,} \\
	&\cong \catA(S\circ G(g_j \tensor \catC(\mplaceholder,c)), S\circ G(g_{j'} \tensor \catC(\mplaceholder,c'))).&&
	\end{alignat*}
	Hence the underlying functor $H=S_0 \circ G_0$ is also fully faithful, so $\catG_0$ can be regarded as a full subcategory $\{g_j \tensor c \mid j \in J,\, c\in \catC\}$ of $\catA_0$ via $H$. Under this identification, $\catG_0$ is a generating set of objects of $\catA_0$, because in the diagram
	\[
	\begin{tikzpicture}[auto]
	\node (ModG) at (-2.8,2) {$\Mod(\catG_0)$};
	\node (hatC) at (0,0) {$\Fun(\catC^\op,\catV)$};
	\node (C) at (-2.8,-2) {$\catG_0$}; \node (D) at (2.8,-2) {$\catA_0$};
	
	\draw[>->] (C) to node {$\scriptstyle \yoneda$} (ModG);
	\draw[>->] (C) to node[swap] {$\scriptstyle H$} (D);
	\draw[->] (0.9,0) to node {$\scriptstyle S_0$}  (2.8,-1.4);
	\draw[->] (2.5,-1.7) to node {$\scriptstyle T_0$} node[sloped,pos=0.45] {$\perp$} (0.6,-0.3);
	
	\draw[->] (-1.9,2) to node {$\scriptstyle S'$}  (0,0.6);
	\draw[>->] (-0.3,0.3) to node {$\scriptstyle T'$} node[sloped,pos=0.45] {$\perp$} (-2.2,1.7);
	\end{tikzpicture}
	\]
	the composite $S_0 \circ S' \circ \yoneda$ is isomorphic to $H\colon \catG_0\to\catA_0$ by \cref{prop:Kan_extension_along_full_faithful} and hence we have $\Lan_H \yoneda \cong T'\circ T_0$, from which we see $\Lan_H \yoneda$ is faithful. 
	By assumption it follows that $\catA_0$ becomes an abelian category with Grothendieck's condition (AB5). Thus it is a Grothendieck category with $\catG_0$ a generating set of objects. Using the Gabriel-Popescu theorem again, we conclude that $S_0\circ S'\cong\Lan_\yoneda H$ is left exact and that $T'\circ T_0 \cong \Lan_H \yoneda$ is fully faithful.
	
	Now we are able to observe that $S_0$ is left exact and $T_0$ is fully faithful. Since $T'$ and $T'\circ T_0$ are fully faithful, so is $T_0$. Next, consider a finite limit $\lim_k P_k$ in $\Fun(\catC^\op,\catV)$. Now that $\Fun(\catC^\op,\catV)$ is a reflective full subcategory of $\Mod(\catG)$, we have $\lim_k P_k \cong S'(\lim_k T'(P_k))$. Then by the left exactness of $S_0 \circ S'$ it follows that
	\[ S_0(\lim\nolimits_k P_k) \cong S_0\circ S'(\lim\nolimits_k T'(P_k)) \cong \lim\nolimits_k S_0 \circ S' (T'(P_k)) \cong \lim\nolimits_k S_0(P_k), \]
	which proves $S_0$ is left exact. Therefore we obtain the desired conclusion.
\end{proof}

As an application of the intrinsic characterization of Grothendieck enriched categories we obtained, we show that the property of being a Grothendieck enriched category is preserved by the change-of-base functors associated to monoidal functors that have strongly monoidal left adjoints.

\begin{proposition}\label{prop:monoidal_right_adjoint_preserve_Grothendieck_V-category}
	Let $\moncatV,\moncatW$ be Grothendieck cosmoi and $F\dashv G\colon \moncatV\to\moncatW$ a monoidal adjunction.	Suppose that $\moncatV$ satisfies the conditions (C1) and (C2). If a $\moncatW$-category $\catB$ is a Grothendieck $\moncatW$-category, then the $\moncatV$-category $G(\catB)$ is a Grothendieck $\moncatV$-category.
\end{proposition}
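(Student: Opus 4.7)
The plan is to verify the five intrinsic conditions of \cref{thm:main_theorem} for the $\moncatV$-category $G(\catB)$; since $\moncatV$ satisfies (C1) and (C2), this is exactly what we need in order to conclude that $G(\catB)$ is a Grothendieck $\moncatV$-category. Conditions (i) and (ii) follow immediately: as a Grothendieck $\moncatW$-category $\catB$ is complete and cocomplete (being a reflective $\moncatW$-subcategory of a presheaf $\moncatW$-category), and \cref{cor:monoidal_right_adjoint_preserve_completeness} transports completeness and cocompleteness from $\catB$ to $G(\catB)$.

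For (iii) and (iv) I would reduce to the underlying ordinary category. By \cref{prop:monoidal_right_ajoint_commute_with_underlying_functor} we have $(G(\catB))_0 \cong \catB_0$, so it suffices to study $\catB_0$. Unpacking the defining $\moncatW$-adjunction $S \dashv T \colon [\catC^\op,\catW] \to \catB$, the underlying functor $T_0$ is fully faithful by \cref{prop:right_adjoint_is_fullyfaithful_iff_its_underlying_is_so}, and $S_0$ is left exact because $S$ preserves finite conical limits while $\catB$ is tensored, so finite conical and ordinary finite limits in $\catB$ coincide by \cref{prop:tensored_and_ordinary_limit_implies_conical_limit}. Since $\Fun(\catC^\op,\moncatW)$ is a Grothendieck category by \cref{prop:Grothen_cat_of_enriched_functors}, \cite[Prop.\ X.1.3]{Stenstrom:1975} then makes $\catB_0$, as a Giraud subcategory of a Grothendieck category, itself a Grothendieck category. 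In particular $(G(\catB))_0$ admits a generator $g$, and \cref{prop:generators_form_V-generators} upgrades $\{g\}$ to a $\moncatV$-generating set of $G(\catB)$, yielding (iii). Condition (iv) is immediate since $(G(\catB))_0 \cong \catB_0$ is abelian.

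For (v), given an ordinary filtered category $\catJ$, the task is to show that $\colim_\catJ \colon [\catJ_\moncatV, G(\catB)] \to G(\catB)$ preserves finite limits. Its domain is finitely complete because $G(\catB)$ is complete, and I would then apply \cref{lem:S_is_left_exact_when_S0_is_so} (this is where (C1) and (C2) for $\moncatV$ are required): it suffices to check that the underlying functor is left exact. Via the adjunction $(-)_\moncatV \dashv (-)_0$, this underlying functor is identified with the ordinary $\catJ$-indexed colimit functor $\Fun(\catJ, \catB_0) \to \catB_0$, which is exact since $\catB_0$ is a Grothendieck category.

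The most delicate point is to ensure that all underlying-category manipulations go through without assuming (C1) or (C2) for $\moncatW$, which is only assumed to be a Grothendieck cosmos. In particular, the left exactness of $S_0$ above must be derived directly from the preservation of finite conical limits (using that $\catB$ is tensored), rather than by invoking the non-trivial direction of \cref{lem:S_is_left_exact_when_S0_is_so} on the $\moncatW$-side, and similarly the identification of the underlying functor of $\colim_\catJ$ must be unpacked from the free $\moncatV$-category construction without any finiteness input from $\moncatW$.
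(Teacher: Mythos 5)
Your proposal is correct and follows essentially the same route as the paper: verify conditions (i)--(v) of \cref{thm:main_theorem} for $G(\catB)$ using \cref{cor:monoidal_right_adjoint_preserve_completeness}, the isomorphism $(G(\catB))_0\cong\catB_0$ from \cref{prop:monoidal_right_ajoint_commute_with_underlying_functor}, the fact that $\catB_0$ is a Grothendieck category (extracted from the necessity half of the main theorem), \cref{prop:generators_form_V-generators} for the $\moncatV$-generating set, and \cref{lem:S_is_left_exact_when_S0_is_so} on the $\moncatV$-side for exactness of conical filtered colimits. Your explicit care that only the trivial direction of \cref{lem:S_is_left_exact_when_S0_is_so} is used on the $\moncatW$-side (where (C1) and (C2) are not assumed) is a point the paper passes over silently but handles in the same way.
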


\begin{proof}
	From the proof of the necessity of \cref{thm:main_theorem}, the Grothendieck $\moncatW$-category $\catB$ is complete and cocomplete and its underlying category $\catB_0$ is a Grothendieck category.
	
	Let us check that the $\moncatV$-category $G(\catB)$ satisfies the conditions (i)--(v) in \cref{thm:main_theorem}. Since $\catB$ is complete and cocomplete, \cref{cor:monoidal_right_adjoint_preserve_completeness} yields that $G(\catB)$ is also complete and cocomplete. As is seen in \cref{prop:monoidal_right_ajoint_commute_with_underlying_functor}, there is an isomorphism $(G(\catB))_0 \cong \catB_0$ of categories. Thus the homomorphism theorem holds, and by \cref{lem:S_is_left_exact_when_S0_is_so} conical filtered colimits are left exact in $G(\catB)$. Moreover $(G(\catB))_0$ has a generating set of objects, and hence $G(\catB)$ has a $\moncatV$-generating set of objects by \cref{prop:generators_form_V-generators}. Therefore it follows from \cref{thm:main_theorem} that $G(\catB)$ is a Grothendieck $\moncatV$-category.
\end{proof}

\begin{example}\label{example:general_ver_of_Qcoh}
	Let $X,Y$ be schemes and $f\colon X\to Y$ a quasi-compact and quasi-separated morphism between them. We also suppose that the Grothendieck cosmos $\Qcoh(Y)$ satisfies the conditions (C1) and (C2); this is the case if $Y$ is projective by \cref{example:G-cosmos_of_quasi-coh}. Since $f$ is quasi-compact and quasi-separated, the pushout functor $f_*\colon \Mod(\mO_X) \to \Mod(\mO_Y)$ preserves the quasi-coherence, inducing a functor $f_*\colon \Qcoh(X)\to \Qcoh(Y)$. Thus we have an adjunction
	\[
	\begin{tikzpicture}
	\node at (-0.2,0) {$f^*:\Qcoh(Y)$};
	\node at (3.6,0) {$\Qcoh(X):f_*$,};
	\draw[->] (0.9,0.16) to (2.5,0.16);
	\draw[->] (2.5,-0.16) to (0.9,-0.16);
	\node at (1.7,0) {$\perp$};
	\end{tikzpicture}
	\]
	which turns out to be a monoidal adjunction. If $\Ch(\Qcoh(X))$ denotes the category of complexes of objects of $\Qcoh(X)$, then it is also a Grothendieck cosmos with the total tensor product and the total internal Hom (\cref{example:Ch(V)_satisfy_C1andC2}).
	
	If we view $\Qcoh(X)$ itself as the $\Qcoh(X)$-category $\catB=\Qcoh(X)$, then it is clearly a Grothendieck $\Qcoh(X)$-category. Hence, by \cref{prop:monoidal_right_adjoint_preserve_Grothendieck_V-category}, the $\Qcoh(Y)$-category $\catA\coloneqq f_*(\catB)$ is a Grothendieck $\Qcoh(Y)$-category. Recall that $\catA$ here is the $\Qcoh(Y)$-category whose objects are quasi-coherent sheaves on $X$ and whose Hom objects are $f_*\intHom_X^\qc(\sheafF,\sheafG) \in \Qcoh(Y)$.
	
	Furthermore, the monoidal adjunction $f^*\dashv f_*\colon \Qcoh(Y) \to \Qcoh(X)$ induces the monoidal adjunction $f^*\dashv f_*\colon \Ch(\Qcoh(Y)) \to \Ch(\Qcoh(X))$ between categories of complexes, and the Grothendieck cosmos $\Ch(\Qcoh(Y))$ also satisfies the conditions (C1) and (C2) by \cref{example:Ch(V)_satisfy_C1andC2}.
	
	Regarding $\Ch(\Qcoh(X))$ itself as a $\Ch(\Qcoh(X))$-category $\catB'=\Ch(\Qcoh(X))$, we also see that it is a Grothendieck $\Ch(\Qcoh(X))$-category, and hence by \cref{prop:monoidal_right_adjoint_preserve_Grothendieck_V-category} the $\Ch(\Qcoh(Y))$-category $\catA'\coloneqq f_*(\catB')$ is a Grothendieck $\Ch(\Qcoh(Y))$-category. Note that $\catA'$ has complexes of quasi-coherent sheaves on $X$ as objects and $f_* \cpx{\intHom_X^\qc}(\cpx{\sheafF},\cpx{\sheafG}) \in \Ch(\Qcoh(Y))$ as Hom objects.
\end{example}

\begin{example}\label{example:dg_cat_on_qcqs_scheme_over_R}
	As a special case of \cref{example:general_ver_of_Qcoh}, consider a quasi-compact and quasi-separated scheme $f\colon X \to \Spec(R)$ over a commutative ring $R$ (for example, separated schemes of finite type over a field). Via the equivalence $\Mod(R)\simeq \Qcoh(\Spec(R))$ of categories, the pushout functor $f_*$ is naturally isomorphic to the global section functor $\Gamma(X,\mplaceholder)\colon \Qcoh(X)\to \Mod(R)$. Then we can verify $f_*\intHom_X^\qc(\sheafF,\sheafG) \cong \Hom_{\mO_X}(\sheafF,\sheafG)$.
	
	Define a $\Ch(R)$-category (i.e., a dg category over $R$) $\catA$ as follows: objects of $\catA$ are complexes of quasi-coherent sheaves on $X$ and Hom complexes of $\catA$ are $\catA(\cpx{\sheafF},\cpx{\sheafG})\in \Ch(R)$ such that
	\begin{gather*}
		\catA(\cpx{\sheafF},\cpx{\sheafG})^n = \prod_{i \in \Z} \Hom_{\mO_X}(\sheafF^i,\sheafG^{i+n}), \\
		d^n(h) = \{ d_{\cpx{\sheafG}}^{i+n} \circ h^i - (-1)^n h^{i+1} \circ d_{\cpx{\sheafF}}^i \}_{i \in \Z}.
	\end{gather*}
	In other words, 
	\[ \catA(\cpx{\sheafF},\cpx{\sheafG}) \cong f_*(\cpx{\intHom_X^\qc}(\cpx{\sheafF},\cpx{\sheafG})). \]
	Then $\catA\cong f_*(\Ch(\Qcoh(X)))$ holds and it follows from \cref{prop:monoidal_right_adjoint_preserve_Grothendieck_V-category} that $\catA$ is a Grothendieck dg category over $R$.
\end{example}

We can prove that the adjoint functor theorem holds for Grothendieck $\moncatV$-categories. This indicates the usefulness of Grothendieck $\moncatV$-categories.

\begin{proposition}\label{prop:adjoint_functor_theorem}
	Let $\moncatV$ be a locally finitely presentable base and $F\colon \catA\to\catB$ a $\moncatV$-functor. Suppose $\catA$ is a Grothendieck $\moncatV$-category. If it preserves all small colimits, then $F$ has a right adjoint.
\end{proposition}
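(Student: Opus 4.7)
My plan is to exploit the presentation of $\catA$ as a reflective subcategory of a presheaf $\moncatV$-category built into the definition of a Grothendieck $\moncatV$-category. Fix a small $\moncatV$-category $\catC$ together with a $\moncatV$-adjunction $S \dashv T \colon [\catC^\op, \catV] \to \catA$ with $T$ fully faithful, so that in particular the counit $ST \Rightarrow \mathrm{id}_\catA$ is an isomorphism.

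First I would form the composite $\widetilde F \coloneqq F \circ S \colon [\catC^\op, \catV] \to \catB$: both $S$ (as a left adjoint) and $F$ (by hypothesis) are cocontinuous, so $\widetilde F$ preserves all small colimits. Second, I would construct an explicit right adjoint $R \colon \catB \to [\catC^\op, \catV]$ to $\widetilde F$ by $R(B) \coloneqq \catB(\widetilde F \yoneda(\mplaceholder), B)$. The adjunction $\widetilde F \dashv R$ follows from the density coend $P \cong \int^{c \in \catC} P(c) \tensor \yoneda(c)$ together with the cocontinuity of $\widetilde F$, via
\[
\catB(\widetilde F(P), B) \cong \int_{c \in \catC} \catV\bigl(P(c),\, \catB(\widetilde F \yoneda(c), B)\bigr) \cong [\catC^\op, \catV](P, R(B)),
\]
and this step requires no cocompleteness hypothesis on $\catB$, only that $\widetilde F$ preserves the one small colimit at hand.

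The heart of the argument is to show that $R(B)$ always lies in the essential image of $T$. For a reflective subcategory with fully faithful right adjoint, the essential image of $T$ is precisely the class of objects of $[\catC^\op, \catV]$ right orthogonal to the class $\Sigma$ of morphisms $\alpha$ with $S(\alpha)$ invertible; this is standard and follows from the triangle identities together with the fact that the counit $\epsilon$ is an isomorphism. But if $\alpha \in \Sigma$ then $\widetilde F(\alpha) = F(S(\alpha))$ is invertible in $\catB$, whence $[\catC^\op, \catV](\alpha, R(B)) \cong \catB(\widetilde F(\alpha), B)$ is invertible. Hence $R(B)$ is $\Sigma$-orthogonal, so the unit $\eta_{R(B)} \colon R(B) \to T S R(B)$ is an isomorphism, and setting $G(B) \coloneqq S R(B)$ yields a $\moncatV$-functor $G \colon \catB \to \catA$ together with a natural isomorphism $R \cong T \circ G$.

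Finally, assembling the pieces using $ST \cong \mathrm{id}_\catA$, the adjunction $\widetilde F \dashv R$, and the full faithfulness of $T$, we obtain
\[
\catB(F(A), B) \cong \catB(FST(A), B) \cong [\catC^\op, \catV](T(A), R(B)) \cong [\catC^\op, \catV](T(A), T G(B)) \cong \catA(A, G(B)),
\]
natural in $A$ and $B$, which exhibits $G$ as a right adjoint to $F$. I expect the orthogonality step isolating $R(B)$ inside the essential image of $T$ to be the main conceptual obstacle; once that standard reflective-subcategory fact is invoked, the remaining manipulations are formal.
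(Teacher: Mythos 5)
Your proof is correct, but it takes a genuinely different route from the paper. The paper's argument is essentially a two-line citation: from \cref{def:Grothendieck_V-category} one observes that $S\circ\yoneda\colon\catC\to\catA$ is a dense $\moncatV$-functor (this is exactly the statement that $T\cong\Lan_{S\yoneda}\yoneda$ is fully faithful), so $\catA$ is cocomplete with a small dense subcategory, and Kelly's Theorem~5.33 then says that every cocontinuous $\moncatV$-functor out of such an $\catA$ has a right adjoint. You instead unpack the reflective presentation by hand: you form $\widetilde F=F\circ S$, obtain its right adjoint $R=\catB(\widetilde F\yoneda(\mplaceholder),\mplaceholder)$ from the co-Yoneda coend exactly as in \cref{thm:ubiquitus_adjunction} (correctly noting that no cocompleteness of $\catB$ is needed, only preservation of the one colimit $P\cong\int^{c}P(c)\tensor\yoneda(c)$), and then use the standard identification of the essential image of a fully faithful right adjoint with the class of objects enriched-orthogonal to the $S$-inverted morphisms to see that $R$ factors through $T$; since $\widetilde F=F\circ S$ inverts everything $S$ inverts, $R(B)$ is orthogonal, and $G=S\circ R$ does the job. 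Both arguments rest on the same underlying fact (the small dense/reflective presentation of $\catA$), but yours is self-contained and produces the right adjoint explicitly as $B\mapsto S\bigl(\catB(FS\yoneda(\mplaceholder),B)\bigr)$, at the cost of having to verify the orthogonality characterization of the essential image, whereas the paper's is shorter but leans entirely on Kelly's density machinery as a black box. All the steps you flag as "standard" (counit an isomorphism, the orthogonality description of the image, $\moncatV$-naturality of the final composite) do go through in the enriched setting, so I see no gap.
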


\begin{proof}
	From \cref{def:Grothendieck_V-category}, we observe that the $\moncatV$-functor $S\circ\yoneda\colon \colon \catC \to \catA$ is dense in the sense of \cite[\S5.1]{Kelly:1982Basic} and $\catA$ has a small dense full subcategory (\cite[\S5.3]{Kelly:1982Basic}). Therefore it follows from \cite[Thm.\ 5.33]{Kelly:1982Basic} that $F$ has a right adjoint.
\end{proof}

\begin{proposition}\label{prop:AFT_for_C1C2GrothCosmos}
	Let $\moncatV$ be a Grothendieck cosmos with the conditions (C1) and (C2) and $F\colon \catA\to\catB$ a $\moncatV$-functor.
	\begin{enumerate}
		\item Suppose $\catA$ is a Grothendieck $\moncatV$-category. If it preserves all small conical colimits, then $F$ has a right adjoint.
		\item Suppose $\catA$ is a Grothendieck $\moncatV$-category and $\catB$ is cotensored. If the underlying ordinary functor $F_0$ is cocontinuous, then $F$ has a right adjoint.
	\end{enumerate}
\end{proposition}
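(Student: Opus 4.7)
The plan is to reduce both statements to Proposition~\ref{prop:adjoint_functor_theorem}, which yields a right adjoint as soon as $F$ preserves all small weighted colimits. Since the Grothendieck $\moncatV$-category $\catA$ is cocomplete, Theorem~\ref{thm:complete=cotensored_and_small_conical_limit} guarantees it is tensored with all small conical colimits, and by Proposition~\ref{prop:limit_is_end_of_cotensor}(ii) every weighted colimit in $\catA$ is a coend $\int^J WJ \tensor FJ$, built out of tensors, coproducts and coequalizers. So it suffices to check that $F$ preserves tensors and conical colimits. In part (i) conical colimits are preserved by hypothesis, leaving the tensors.

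For the tensor step, the key ingredient is condition (C2): $\moncatV$ has a generating set $\{g_j\}_{j\in J}$ of dualizable objects. When $X$ is dualizable, Proposition~\ref{prop:tensoring_with_dualizable_is_absolute} shows $X \tensor A$ is absolute and so is preserved by every $\moncatV$-functor out of $\catA$. For a general $X \in \moncatV$, Proposition~\ref{prop:equivalent_condition_of_generators_in_abelian_cat}~(iv) supplies an exact sequence
\[
\bigoplus_{k} g_{j_k} \;\to\; \bigoplus_{i} g_{j_i} \;\to\; X \;\to\; 0
\]
in the Grothendieck category $\moncatV$. The functor $\mplaceholder \tensor A$ is a left $\moncatV$-adjoint to $\catA(A,\mplaceholder)$ and hence cocontinuous, so this presents $X \tensor A$ as the (conical) cokernel in $\catA$ of $\bigoplus_k g_{j_k} \tensor A \to \bigoplus_i g_{j_i} \tensor A$. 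Applying $F$, preservation of conical colimits combined with the absoluteness of tensoring by each $g_j$ shows that $F(X \tensor A)$ is the conical cokernel of $\bigoplus_k g_{j_k} \tensor F(A) \to \bigoplus_i g_{j_i} \tensor F(A)$ in $\catB$. A short Yoneda check, using that $[\mplaceholder, \catB(F(A), B)]$ sends colimits in $\moncatV$ to limits, then identifies this cokernel with the tensor $X \tensor F(A)$ and finishes part (i).

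For part (ii), the aim is to reduce to part (i) by verifying that $F$ preserves small conical colimits. Since a Grothendieck $\moncatV$-category is complete (being reflective in a presheaf category), $\catA$ is cotensored by Theorem~\ref{thm:complete=cotensored_and_small_conical_limit}, so Proposition~\ref{prop:tensored_and_ordinary_limit_implies_conical_limit} identifies the conical colimit in $\catA$ of a diagram $H\colon \catL \to \catA_0$ with the ordinary colimit of $H$ in $\catA_0$. Cocontinuity of $F_0$ sends this to $\colim F_0 H$ in $\catB_0$, and another application of Proposition~\ref{prop:tensored_and_ordinary_limit_implies_conical_limit}, using that $\catB$ is cotensored by hypothesis, re-enriches this ordinary colimit to a conical colimit of $F\widetilde{H}$ in $\catB$. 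Thus $F$ preserves small conical colimits, and part (i) applies. The main obstacle throughout is the tensor-preservation step in part (i): one must patch absoluteness on dualizables to arbitrary objects of $\moncatV$ via the presentation supplied by (C2); everything else is routine bookkeeping with the tensored/cotensored machinery collected in Section~\ref{subsec:enriched_categories}.
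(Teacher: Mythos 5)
Your proposal is correct and follows essentially the same route as the paper's proof: reduce to Proposition~\ref{prop:adjoint_functor_theorem} by showing $F$ preserves tensor products, present an arbitrary $X\in\moncatV$ by dualizables via (C2) and Proposition~\ref{prop:equivalent_condition_of_generators_in_abelian_cat}, use absoluteness of tensoring with dualizables plus preservation of conical colimits, and identify $F(X\tensor A)\cong X\tensor F(A)$ by a hom-object/Yoneda comparison; part (ii) is reduced to part (i) exactly as in the paper using that $\catB$ is cotensored.
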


\begin{proof}
	Note that $\moncatV$ is a locally finitely presentable base by \cref{prop:C1_C2_implies_lfp_base}. Since $\moncatV$ has a generating set of dualizable  objects, \cref{prop:equivalent_condition_of_generators_in_abelian_cat} shows that any object $X\in \moncatV$ has an exact sequence of the form
	\begin{alignat}{2}
		\bigoplus_i P_i \to \bigoplus_j P_j \to X \to 0&  & \qquad & \tag{$\vartriangle$}
	\end{alignat}
	where $P_i, P_j$ are dualizable.
	
	(i) It follows from \cref{prop:end_is_conical_limit_of_cotensor,prop:limit_is_end_of_cotensor} that all colimits can be written as conical colimits of tensor products. Hence we only need to show that $F$ preserves tensor products, from which \cref{prop:adjoint_functor_theorem} deduces that $F$ has a right adjoint.
	
	For $X\in \moncatV$ and $A\in \catA$, consider the tensor product $X\tensor A$. Since the $\moncatV$-functor $\mplaceholder\tensor A\colon \catV\to\catA$ preserves colimits, ($\vartriangle$) induces the exact sequence
	\[
	\bigoplus\nolimits_i (P_i \tensor A) \to \bigoplus\nolimits_j (P_j \tensor A) \to X\tensor A \to 0.
	\]
	Using the assumption that $F$ preserves conical colimits and the fact that $P_i\tensor A, P_j \tensor A$ are absolute colimits, we obtain the exact sequence
	\[
	\bigoplus\nolimits_i (P_i \tensor F(A)) \to \bigoplus\nolimits_j (P_j \tensor F(A)) \to F(X\tensor A) \to 0.
	\]
	Thus the left exactness of $\catB(\mplaceholder,B)$ gives the exact sequence
	\[
	0 \to \catB(F(X\tensor A),B) \to \catB\left(\bigoplus\nolimits_j (P_j \tensor F(A)),B\right) \to \catB\left(\bigoplus\nolimits_i (P_i \tensor F(A)), B\right).
	\]
	On the other hand, by the left exactness of $\catV(\mplaceholder,\catB(F(A),B))$, we get from ($\vartriangle$) the exact sequence
	\[
	0 \to \catV(X,\catB(F(A),B)) \to \catV\left(\bigoplus\nolimits_i P_i,\catB(F(A),B)\right) \to \catV\left(\bigoplus\nolimits_j P_j, \catB(F(A),B)\right).
	\]
	Then, considering isomorphisms
	\[ \catB\left(\bigoplus\nolimits_i (P_i \tensor F(A)), B\right) \cong \prod_i \catB(P_i\tensor F(A),B) \cong \prod_i \catV(P_i,\catB(F(A),B)) \cong \catV\left(\bigoplus\nolimits_i P_i,\catB(F(A),B)\right), \]
	we have $\catB(F(X\tensor A),B) \cong \catV(X,\catB(F(A),B))$. Therefore $F(X\tensor A)$ forms the tensor product $X\tensor F(A)$, and hence $F$ preserves the tensor product $X\tensor A$.
	
	(ii) Because $\catB$ is cotensored, \cref{prop:tensored_and_ordinary_limit_implies_conical_limit} shows that ordinary colimits exsisting in $\catB_0$ become conical colimits in $\catB$. The cocontinuity of $F_0$ implies that $F$ preserves conical colimits, so the assertion follows from (i).
\end{proof}

\begin{remark}
	There is the fact that an arbitrary continuous functor from a Grothendieck category $\catA$ to $\Set$ is representable, from which the adjoint functor theorem for $\catA$ follows.
	The author expects that the similar result should hold for Grothendieck enriched categories as well, though he has not found a proof yet.
\end{remark}

\begin{remark}\label{remark:the_future_task}
	One of the motivations of this work was to establish the derived-dg version of the theory of Grothendieck categories.
	In this paper we succeeded in formulating an analogue of Grothendieck categories in the $2$-category $\dgCat = \ChCat$ of dg categories,
	where $\Ch$ is the Grothendieck cosmos of complexes of abelian groups. On the other hand, in view of the relations to triangulated categories and stable infinity categories,
	it seems more appropriate to work in the localization $\HodgCat$ of $\dgCat$ with respect
	to quasi-equivalences and use derived dg categories instead of the dg categories of dg modules.	Unfortunately our methods do not apply to $\HodgCat$ immediately, for the reason that it does not admit a canonical nontrivial $2$-categorical structure.
	We had to use such a structure to define the notion of Grothendieck categories in terms of adjunctions. Recall that we used adjunctions induced by Kan extensions in order to obtain the nerve-and-realization adjunction,
	which plays an important role in the proof of \cref{thm:main_theorem}. Solving this issue is a future task.
\end{remark}

\begin{remark}
	There are a few proposed definitions for the $2$-categorical structures on $\HodgCat$. For example, one of them identifies morphisms of $\HodgCat$ with \emph{right quasi-representable} bimodules (\cite{Keller:2006,Toen:2007}). Then morphisms of bimodules in the derived category play the role of $2$-morphisms between morphisms in $\HodgCat$. Concerning this $2$-categorical structure, \cite{Genovese:2017} defines adjunctions of morphisms in $\HodgCat$. Also \cite{Lowen-RamosGonzalez:2020} introduces the notion of a dg Bousfield localization of pretriangulated dg categories, which is equivalently defined as a quasi-fully faithful dg functor that has a left adjoint as a quasi-functor. 
	The author does not have an explicit explanation as to the relations of the results of this paper to these work.
\end{remark}


\begin{thebibliography}{99}
\bibitem[AlGa16]{AlHwaeer-Garkusha:2016}
Hassan Al Hwaeer and Grigory Garkusha. \textit{Grothendieck categories of enriched functors}. J.\ Alg.\ 450, 204--241, 2016.

\bibitem[BoKa90]{Bondal-Kapranov:1990}
A.\ I.\ Bondal and M.\ M.\ Kapranov. \textit{Enhanced triangulated categories}. (Russian) Mat.\ Sb.\ 181, no.\ 5, 669--683, 1990; English translation in Math.\ USSR-Sb.\ 70, no.\ 1, 93--107, 1991.


\bibitem[Bor94a]{Borceux:1994HoCA1} 
Francis Borceux. \textit{Handbook of Categorical Algebra 1, Basic Category Theory}. Cambridge University Press, 1994.

\bibitem[Bor94b]{Borceux:1994HoCA2}
Francis Borceux. \textit{Handbook of Categorical Algebra 2, Categories and Structures}. Cambridge University Press, 1994.

\bibitem[Bor94c]{Borceux:1994HoCA3}
Francis Borceux. \textit{Handbook of Categorical Algebra 3, Categories of Sheaves}. Cambridge University Press, 1994.

\bibitem[BoQu96]{Borcuex-Quinteiro:1996}
Francis Borceux and Carmen Quinteiro. \textit{A theory of enriched sheaves}. Cahiers de Top.\ et G\'eom.\ Diff.\ 37, no.\ 2, 145--162, 1996.

\bibitem[BQR98]{Borcuex-Quinteiro-Rosicky:1998}
Francis Borceux, Carmen Quinteiro, and Ji\v{r}\'i Rosick\'y. \textit{A theory of enriched sketches}. Theory Appl.\ Categ.\ 4, no.\ 3, 47--72, 1998.

\bibitem[Bou89]{Bourbaki:1989Algebra1}
Nicolas Bourbaki. \textit{Algebra I, Chapters 1--3}. Elements of Mathematics, Springer-Verlag, Berlin, 1989.

\bibitem[DLa]{DiLiberti:MathOverflow349948}
Ivan Di Liberti (\url{https://mathoverflow.net/users/104432/ivan-di-liberti}). \textit{General Theory of Left-Exact Localization?}, MathOverflow. URL: \url{https://mathoverflow.net/q/349948} (version: 2020-01-15).

\bibitem[DLb]{DiLiberti:MathOverflow389388}
Ivan Di Liberti (\url{https://mathoverflow.net/users/104432/ivan-di-liberti}). \textit{Is there a good general definition of ``sheaves with values in a category''?}, MathOverflow. URL: \url{https://mathoverflow.net/q/389388} (version: 2021-04-06).

\bibitem[DLRG21]{DiLiberti-RamosGonzalez:2021}
Ivan {Di Liberti} and Julia {Ramos Gonz\'alez}. \textit{Exponentiable Grothendieck categories in flat algebraic geometry}. 
arXiv Preprint, \href{https://arxiv.org/abs/2103.07876}{arXiv:2103.07876}, 2021.

\bibitem[DoPu84]{Dold-Pippe:1984}
Albrecht Dold and Dieter Puppe. \textit{Duality, trace and transfer}. Proc.\ of Steklov Institute of Math., 85--103, 1984.

\bibitem[EnRo97]{Enochs-Rozas:1997}
Edgar E.\ Enochs and J.\ R.\ Garcia Rozas. \textit{Tensor products of complexes}. Math.\ J.\ Okayama Univ.\ 39, 17-39, 1997.

\bibitem[EGNO15]{Etingof-Gelaki-Nikshych-Ostrik:2015}
Pavel Etingof, Shlomo Gelaki, Dmitri Nikshych, and Victor Ostrik. \textit{Tensor categories}, Mathematical Surveys and Monographs 205. American Mathematical Society, Providence, RI, 2015.

\bibitem[GaPo64]{Gabriel-Popescu:1964}
Pierre Gabriel and Nicolae Popescu. \textit{Caract\'erisation des cat\'egories ab\'eliennes avec g\'en\'erateurs et limites inductives exactes}. C.\ R.\ Acad.\ Sci.\ Paris 258, 4188--4191, 1964.

\bibitem[GaJo18]{Garkusha-Jones:2018}
Grigory Garkusha and Darren Jones. \textit{Derived categories for Grothendieck categories of enriched functors}. arXiv Preprint, \href{https://arxiv.org/abs/1803.09451}{arXiv:1803.09451}, 2018.

\bibitem[GaLa12]{Garner-Lack:2012}
Richard Garner and Stephen Lack. \textit{Lex colimits}. Journal of Pure and Applied Algebra 216, no.\ 6, 1372--1396, 2012.

\bibitem[Gen17]{Genovese:2017}
Francesco Genovese. \textit{Adjunctions of quasi-functors between dg-categories}. Applied Categorical Structures 25, 625--657, 2017.

\bibitem[HoOd19]{Holm-Odabasi:2019}
Henrik Holm and Sinem Odabasi. \textit{The tensor embedding for a grothendieck cosmos}. arXiv Preprint, \href{https://arxiv.org/abs/1911.12717}{arXiv:1911.12717}, 2019.

\bibitem[Kel06]{Keller:2006}
Bernhard Keller. \textit{On differential graded categories}. International Congress of Mathematicians, vol.\ II, Eur.\ Math.\ Soc., Z\"urich, 151-–190, 2006.

\bibitem[Kel82a]{Kelly:1982Basic}
G.\ M.\ Kelly. \textit{Basic Concepts of Enriched Category Theory}, Lecture Notes in Mathematics 64. Cambridge University
Press, 1982. \url{http://tac.mta.ca/tac/reprints/articles/10/tr10abs.html}.

\bibitem[Kel82b]{Kelly:1982Structures}
G.\ M.\ Kelly. \textit{Structures defined by finite limits in the enriched context, I}. Cahiers de Top.\ et G\'eom.\ Diff.\ 23, no.\ 1, 3--42, 1982.

\bibitem[Kuh94]{Kuhn:1994}
Nicholas J.\ Kuhn. \textit{Generic representations of the finite general linear groups and the Steenrod algebra.\ I}. American Journal of Mathematics 116, no.\ 2, 327--360, 1994.

\bibitem[LMS86]{Lewis-May-Steinberger:1986}
L.\ Gaunce Lewis, Jr., J.\ Peter May, and Mark Steinberger. \textit{Equivariant stable homotopy theory}, Lecture Notes in Mathematics 1213. Springer-Verlag, Berlin, 1986 (with contributions by James E.\ McClure).

\bibitem[Low04]{Lowen:2004}
Wendy Lowen. \textit{A generalisation of the Gabriel-Popescu theorem}. Journal of Pure and Applied Algebra 190, 197--211, 2004.

\bibitem[Low16]{Lowen:2016}
Wendy Lowen. \textit{Linearized topologies and deformation theory}. Topology and its Applications 200, 176--211, 2016.

\bibitem[LoRG20]{Lowen-RamosGonzalez:2020}
Wendy Lowen and Julia {Ramos Gonz\'alez}. \textit{On the tensor product of well generated dg categories}, arXiv Preprint, \href{https://arxiv.org/abs/2002.09991}{arXiv:2002.09991}, 2020.

\bibitem[Lur17]{Lurie:HigherAlgebra}
Jacob Lurie. \textit{Higher Algebra}. 2017. \url{https://www.math.ias.edu/~lurie/papers/HA.pdf}.


\bibitem[$n$Lab]{nLab}
$n$Lab, --2021. \url{https://ncatlab.org/nlab/show/HomePage}.

\bibitem[Por10]{Porta:2010}
Macro Porta. \textit{The Popescu-Gabriel theorem for triangulated categories}. Advances in Mathematics 225, 1669--1715, 2010.

\bibitem[PrRa10]{Prest-Ralph:2010}
Mike Prest and Alexandra Ralph. \textit{Locally finitely presented categories of sheaves of modules}. MIMS Preprint, 2010. \url{http://eprints.maths.manchester.ac.uk/1410/}.

\bibitem[RG18]{RamosGonzalez:2018}
Julia {Ramos Gonz\'alez}. \textit{Grothendieck categories as a bilocalization of linear sites}. Applied Categorical Structures 26, no.\ 4, 717--745, 2018.

\bibitem[SP]{stacks-project}
The {Stacks project authors}. \textit{The Stacks project}, --2021. \url{https://stacks.math.columbia.edu}.

\bibitem[Ste75]{Stenstrom:1975}
Bo Stenstr\"om. \textit{Rings of Quotients, An Introduction to Methods of Ring Theory}. Grundlehren der mathematischen Wissenschaften 217, Springer-Verlag Berlin Heidelberg, 1975.

\bibitem[Str83]{Street:1983}
Ross Street. \textit{Absolute colimits in enriched categories}. Cahiers de Top.\ et G\'eom.\ Diff.\ 24, no.\ 4, 377--379, 1983.

\bibitem[\v{S}\v{t}o10]{Stovicek:2010}
Jan \v{S}\v{t}ov\'{\i}\v{c}ek. \textit{Locally well generated homotopy categories of complexes}. Documenta Mathematica 15, 507--525, 2010.

\bibitem[To\"e07]{Toen:2007}
Bertrand To\"en. \textit{The homotopy theory of dg-categories and derived Morita theory}. Invent.\ Math.\ 167, no.\ 3, 615--667, 2007.


\end{thebibliography}
\end{document}